\newcommand{\CC}{\mathbb{C}}
\newcommand{\PP}{\mathbb{P}}
\newcommand{\OO}{\mathcal{O}}
\newcommand{\ZZ}{\mathbb{Z}}
\newcommand{\ch}{\operatorname{ch}}
\newcommand{\ext}{\mathrm{ext}}
\newcommand{\Quot}{\mathrm{Quot}}
\newcommand{\Spec}{\operatorname{Spec}}
\newcommand{\Gr}{\mathrm{Gr}}
\newcommand{\Mor}{\mathrm{Mor}}
\newcommand{\inj}{\hookrightarrow}
\newcommand{\surj}{\twoheadrightarrow}
\newcommand{\im}{\mathrm{im}}
\newcommand{\ev}{\mathrm{ev}}
\newtheorem{thm}{Theorem}[subsection]
\newtheorem{lem}[thm]{Lemma}
\newtheorem{prop}[thm]{Proposition}
\newtheorem{cor}[thm]{Corollary}
\newtheorem*{main}{Main Theorem}
\newtheorem*{maincor}{Main Corollary}
\theoremstyle{definition}
\newtheorem{defn}[thm]{Definition}
\newtheorem{ex}[thm]{Example}
\newtheorem{strategy}[thm]{Strategy}
\theoremstyle{remark}
\newtheorem{rmk}[thm]{Remark}
\title[Weighted TQFT for Quot schemes on curves]{A weighted topological quantum field theory for Quot schemes on curves}
\author{Thomas Goller} \thanks{The author was partially supported by the NSF RTG grant DMS-1246989.}
\address{School of Mathematics\\
  Korea Institute for Advanced Study\\
  85 Hoegiro Dongdaemun-gu, Seoul 02455, Republic of Korea}
\email{goller@kias.re.kr}
\begin{document}

\begin{abstract}
We study Quot schemes of vector bundles on algebraic curves.
Marian and Oprea gave a description of a topological quantum field theory (TQFT) studied by Witten in terms of intersection numbers on Quot schemes of trivial bundles. Since these Quot schemes can have the wrong dimension, virtual classes are required. But Quot schemes of general vector bundles always have the right dimension. Using the degree of the general vector bundle as an additional parameter, we construct a weighted TQFT containing both Witten's TQFT and the small quantum cohomology TQFT of the Grassmannian. This weighted TQFT is completely geometric (no virtual classes are needed), can be explicitly computed, and recovers known formulas enumerating the points of finite Quot schemes.
\end{abstract}

\maketitle

\section{Introduction}\label{s:intro}
Let $C$ be a smooth projective curve of genus $g$ over $\CC$. Fixing a coherent sheaf $V$ and a cohomology class $\sigma$ on $X$, let $\Quot(\sigma,V)$ denote the Grothendieck Quot scheme, which is a projective scheme whose closed points parametrize short exact sequences $0 \to E \to V \to F \to 0$ of coherent sheaves in which the Chern character $\ch(E)$ is $\sigma$. We can think of $\sigma$ as an ordered pair $(r,-e)$ encoding the rank $r$ and the degree $-e$ of $E$.

Throughout the paper, we fix positive integers $r$ and $s$, use $r$ for the rank of the subbundles, and let $V$ be a vector bundle on $C$ of rank $r+s$. This allows us to use the simpler notation $Q_{e,V} = \Quot\big((r,-e),V\big)$ for the Quot scheme. Denoting the degree of $V$ by $d \in \ZZ$, the expected dimension of $Q_{e,V}$ is
\[
	\mathrm{expdim}(Q_{e,V}) = rd + (r+s)e - rs(g-1)
\]
or 0 if this number is negative.


When $V = \OO_C^{r+s}$, we often write $Q_{e,C}$ instead of $Q_{e,\OO_C^{r+s}}$. In the particular case $e=0$, the Quot scheme $Q_{0,C}$ parametrizes short exact sequences of the form $0 \to \OO_C^r \to \OO_C^{r+s} \to \OO_C^s \to 0$ and is isomorphic to the Grassmannian $G=\Gr(r,\CC^{r+s})$ of $r$-planes in $\CC^{r+s}$. The Grassmannian contains Schubert varieties $W_{\vec{a}} \subset G$ indexed by partitions $\vec{a}$, whose intersection theory is described using the Schubert cycles $\sigma_{\vec{a}} = [W_{\vec{a}}]$ in the cohomology ring $H^*(G,\CC)$. A more general intersection theory can be studied on any $Q_{e,V}$; we denote the analogous ``Schubert varieties'' by $\overline{W}_{\vec{a}}(q)$ (where $q$ is a point on $C$) and their cohomology classes, which are independent of $q$, by $\overline{\sigma}_{\vec{a}}$. We use the notation $\int_{Q_{e,V}} \alpha$, where $\alpha$ is a cohomology class, for intersection numbers on the Quot scheme.

\subsection{First motivation}\label{ss:motivation1}

Let $C$ be a curve of genus $g \ge 2$ and $V$ be a general stable vector bundle of rank $r+s$. It is known that the Quot scheme parametrizing the rank-$r$ subbundles of $V$ of maximal degree is smooth of the expected dimension (see for instance \cite{Hi88}, \cite{RT99}, \cite{LN02}, \cite{Hol04}). It follows that $Q_{e,V}$ is a finite collection of reduced points whenever its expected dimension is zero. Enumerating the points of these finite Quot schemes produces interesting formulas. For instance, when $V$ has degree $s(g-1)$, Marian and Oprea use the formula
\[
	\# Q_{0,V} = V_g^{r,s} := \sum_{I \subset \{ 1,\dots,r+s \}, |I|=r} \: \prod_{(j,k) \in I \times \bar{I}} \left| 2 \sin \pi \tfrac{j-k}{r+s} \right|^{g-1}
\]
(here $\bar{I}$ denotes the complement of $I$ in $\{ \, 1,\dots,r+s \, \}$), which exhibits the number of points of the Quot scheme as a Verlinde number, to prove strange duality on curves (\cite{MO07}). Holla computes a general formula, which counts arbitrary finite Quot schemes, using Gromov-Witten invariants of the Grassmannian (\cite{Hol04}).

\subsection{Second motivation}\label{ss:motivation2}

There is a topological quantum field theory (TQFT), studied by Witten (\cite{Wit95}), which is described in standard mathematical language by Marian and Oprea (\cite{MO10}). A TQFT consists of a vector space $H$ and a collection of linear maps
\[
	F(g)_m^n \colon H^{\otimes m} \to H^{\otimes n}\qquad (g,m,n \ge 0)
\]
satisfying composition relations related to gluing topological surfaces along boundary circles. In fact, by cutting surfaces into pairs of pants, the entire collection of maps is determined by the product $F(0)_2^1$ and the pairing $F(0)_2^0$.

For Witten's TQFT, the vector space is the cohomology ring $H^*(G,\CC)$ of the Grassmannian $G = \Gr(r,\CC^{r+s})$, which has a basis given by the Schubert cycles $\sigma_{\vec{a}}$. The maps can be defined by specifying the coefficients of the $F(g)_m^n$ in this basis as intersection numbers on Quot schemes. Let $C$ be a smooth curve of genus $g$, let $V = \OO_C^{r+s}$, and compile the Quot schemes as $Q_C = \bigsqcup_{e \in \ZZ} Q_{e,C}$. Then define
\[
	F(g)_m^n \colon \sigma_{\vec{a}_1} \otimes \cdots \otimes \sigma_{\vec{a}_m} \mapsto \sum_{\vec{b}_1,\dots,\vec{b}_n} \left( \int_{[Q_C]^{\text{vir}}} \bar{\sigma}_{\underline{\vec{a}}} \cup \bar{\sigma}_{\underline{\vec{b}}} \cup \bar{\sigma}_{1^r}^{r(g+n)+s} \right) \, \sigma_{\vec{b}_1^c} \otimes \cdots \otimes \sigma_{\vec{b}_n^c},
\]
where we use the notation $\bar{\sigma}_{\underline{\vec{a}}} = \bar{\sigma}_{\vec{a}_1} \cup \cdots \cup \bar{\sigma}_{\vec{a}_m}$ and similarly $\bar{\sigma}_{\underline{\vec{b}}} = \bar{\sigma}_{\vec{b}_1} \cup \cdots \cup \bar{\sigma}_{\vec{b}_n}$. A careful definition of the virtual classes and the proof that these maps satisfy the relations of a TQFT can be found in \cite{MO10}.

More concretely, the product $F(0)_2^1$ turns out to be the (small) quantum product, denoted by $*$, and the pairing is
\[
	F(0)_2^0 \colon \sigma_{\vec{a}} \otimes \sigma_{\vec{b}} \mapsto \langle \sigma_{\vec{a}} * \sigma_{\vec{b}},\sigma_{s^r} \rangle,
\]
where $\langle \,,\, \rangle$ denotes the Poincar\'{e} pairing and $\sigma_{s^r}$ is the class of a point. Remarkably, the ``closed invariants'' $F(g)_0^0$, which can be viewed as numbers, are exactly the Verlinde numbers $V_g^{r,s}$, which hints at a connection between Witten's TQFT and the finite Quot schemes in \S\ref{ss:motivation1}.

\subsection{Synthesis: a weighted TQFT}\label{ss:synthesis}

The coefficients of the maps in Witten's TQFT are defined using the intersection theory of Quot schemes of trivial bundles, but these Quot schemes are not always of the expected dimension, which explains the need for virtual classes. On the other hand, Marian and Oprea's result suggests that Quot schemes of general bundles on curves may exhibit better behavior. Thus one can ask whether replacing the trivial bundles in the definition of $F(g)_m^n$ by general vector bundles of degree 0 can avoid the need for virtual classes.

Once the bundles are no longer assumed to be trivial, it is natural to relax the degree 0 assumption as well. This introduces the degree of the general vector bundle $V$ as an additional parameter, which leads to the definition of a weighted TQFT, in which the extra parameter is additive under composition of maps. Moreover, we can attempt to use this parameter to explain the appearance of $\overline{\sigma}_{1^r}$ in the integrand in the definition of $F(g)_m^n$ since we saw in \S\ref{ss:motivation1} that the corresponding Schubert varieties of type $\overline{W}_{1^r}$ arise naturally in the presence of elementary modifications.

Motivated by these ideas, let $C$ be a curve of genus $g$, let $d \in \mathbb{Z}$, and let $V$ be a general stable vector bundle on $C$ of rank $r+s$ and degree $d$.\footnote{We use the word ``stable'' loosely to include balanced vector bundles in the case of $\PP^1$ and semistable vector bundles in genus 1. See \S\ref{ss:elementary_modification}.} Set $Q_V = \bigsqcup_{e \in \ZZ} Q_{e,V}$. Let $H = H^*(G,\CC)$ as above and define a collection of linear maps $F(g|d)_m^n \colon H^{\otimes m} \to H^{\otimes n}$ for all $g,m,n \ge 0$ and $d \in \ZZ$ by
\[
	F(g|d)_m^n \colon \sigma_{\vec{a}_1} \otimes \cdots \otimes \sigma_{\vec{a}_m} \mapsto \sum_{\vec{b}_1,\dots,\vec{b}_n} \left( \int_{Q_V} \bar{\sigma}_{\underline{\vec{a}}} \cup \bar{\sigma}_{\underline{\vec{b}}} \right) \, \sigma_{\vec{b}_1^c} \otimes \cdots \otimes \sigma_{\vec{b}_n^c}.
\]
We do not need virtual classes because, as we will show, the $Q_{e,V}$ are all of the expected dimension. The main purpose of this paper is to prove the following:
\begin{main}\label{t:wtqft}
\begin{enumerate}[(a)]
\item The $F(g|d)_m^n$ defined above are the maps of a weighted TQFT.
\item This weighted TQFT contains both the quantum cohomology TQFT and Witten's TQFT.
\end{enumerate}
\end{main}


The collection of maps with $d=0$ form a usual TQFT, which we show is the quantum cohomology TQFT of the Grassmannian: the product $F(0|0)_2^1$ is the quantum product $*$ and the pairing $F(0|0)_2^0$ is the Poincar\'{e} pairing $\langle \,,\,\rangle$. The weighted TQFT is determined from this degree 0 TQFT by the degree-lowering operator $F(0|{-}1)_1^1$, which is quantum multiplication by $\sigma_{1^r}$, and the degree-raising operator $F(0|1)_1^1$, which is quantum multiplication by $\sigma_s$. Moreover, the weighted TQFT recovers the maps in Witten's TQFT as
\[
	F(g)_m^n = F(g|{-}r(g{+}n){-}s)_m^n = F(g|s(g{-}1{+}n))_m^n,
\]
where the second equality holds because of the identity $\sigma_{1^r}^{r+s} = 1$ in quantum cohomology (note that $r(g+n)+s$ is exactly the power of $\bar{\sigma}_{1^r}$ appearing in the definition of $F(g)_m^n$). In particular,
\[
	V_g^{r,s} = F(g)_0^0 = F(g|s(g{-}1))_0^0
\]
explains the connection to the formula used by Marian and Oprea to count finite Quot schemes: $F(g|s(g{-}1))_0^0$ is the intersection number associated to the fundamental class on $\sqcup_e Q_{e,V}$, where $V$ is a general bundle of degree $s(g-1)$. This number counts the points of the finite Quot scheme $Q_{0,V}$ since the Quot schemes for other values of $e$ are either empty or positive-dimensional.

In the proof of the Main Theorem, we extend the result mentioned in \S\ref{ss:motivation1} by showing that all Quot schemes of general bundles with expected dimension 0 are finite and reduced, including the cases $g=0$ and $g=1$. We use the structure of the weighted TQFT to recover the Verlinde formula above as well as Holla's formula counting the points of any finite Quot scheme of a general vector bundle (\cite{Hol04}, Theorem 4.2):

\begin{maincor} Let $C$ be a curve of genus $g$, let $e = r'\gamma$ for any $0 \le \gamma < \gcd(r,s)$, and let $V$ be a general vector bundle of degree $d = -(r'+s')\gamma + s(g-1)$ on $C$. Then $Q_{e,V}$ is finite and reduced and its points are counted by
\[
	\# Q_{r'\gamma,V} = (r+s)^{r(g-1)} \sum_I \left( \frac{\sigma_{1^r}(\zeta^I)^{(r'+s')\gamma}}{\mathrm{Vand}(\zeta^I)}\right)^{g-1}.
\]
\end{maincor}
Here $I$ ranges over the $r$-element subsets of $\{\, 1,\dots, r+s \,\}$, $\zeta^I$ is an $r$-tuple of powers of $\zeta := e^{2\pi i/(r+s)}$ determined by $I$, $\sigma_{1^r}(\zeta^I)$ is the product of those powers of $\zeta$, and $\mathrm{Vand}(\zeta^I)$ is the product of all differences of those powers of $\zeta$. This notation is explained in \S\ref{ss:qctqft}.

\begin{proof}[Sketch of proof of Main Theorem (a)] Since any composition of morphisms can be viewed as gluing one pair of boundary circles at a time, it suffices to prove the relations for gluing one pair of boundary circles.

Given a smooth curve $C'$ of genus $g$ and a vector bundle $V'$ of rank $r+s$ and degree $d$, there is a degeneration of $C'$ into a reducible nodal curve $C$ with two smooth components $C_1$ and $C_2$ glued at points $p_1$ and $p_2$ to produce a simple node. The genera $g_1$ and $g_2$ of the components sum to $g$. The vector bundle $V'$ also degenerates, producing a vector bundle $V$ on $C$ that can be described by gluing vector bundles $V_i$ on $C_i$ by an isomorphism $V_1(p_1) \simeq V_2(p_2)$ of their fibers over the node. The degrees $d_i$ of the $V_i$ satisfy $d_1+d_2=d$, which reflects the additivity of the weight in the weighted TQFT under composition. The existence of a relative Quot scheme over the base of the deformation guarantees that the intersection numbers on $Q_{e,V'}$ agree with the intersection numbers on $Q_{e,V}$, namely
\[
	\int_{Q_{e,V'}} \bar{\sigma}_{\underline{\vec{a}}_1} \cup \bar{\sigma}_{\underline{\vec{a}}_2}
	= \int_{Q_{e,V}} \bar{\sigma}_{\underline{\vec{a}}_1} \cup \bar{\sigma}_{\underline{\vec{a}}_2},
\]
where we think of the Schubert cycles $\bar{\sigma}_{\underline{\vec{a}}_i}$ on $Q_{e,V'}$ as being based at points that degenerate to lie on $C_i$ (this subtlety turns out to be unnecessary).

The remaining work is to compute the integral on the nodal curve $C$ in terms of intersection numbers on its components $C_i$. The Quot scheme on $C$ can be described by gluing subsheaves (or quotients) on the $C_i$ along their fibers at the points $p_i$. Roughly, for each partition $e = e_1 + e_2$, the isomorphism $V_1(p_1) \simeq V_2(p_2)$ allows us to think of the codomain of
\[
	\ev_{p_1,p_2} \colon Q_{e_1,V_1} \times Q_{e_2,V_2} \dashrightarrow G \times G
\]
as two copies of the same Grassmannian, and the pairs of quotients that can be glued are the preimage $\Delta_{e_1,e_2}$ of the diagonal. In fact, each $\Delta_{e_1,e_2}$ with $e = e_1 + e_2$ embeds as a dense open set in a top-dimensional component of $Q_{e,V}$, and every top-dimensional component of $Q_{e,V}$ corresponds to such a partition of $e$. Recalling that the cohomology class of the diagonal in $G \times G$ is $\sum_{\vec{b}} \sigma_{\vec{b}} \otimes \sigma_{\vec{b}^c}$, we thus obtain the formula
\[
	\int_{Q_{e,V}} \bar{\sigma}_{\underline{\vec{a}}_1} \cup \bar{\sigma}_{\underline{\vec{a}}_2}
	= \sum_{e_1 + e_2 = e} \sum_{\vec{b}} \left( \int_{Q_{e_1,V_1}}\bar{\sigma}_{\underline{\vec{a}}_1} \cup \bar{\sigma}_{\vec{b}} \right) \left( \int_{Q_{e_2,V_2}}\bar{\sigma}_{\underline{\vec{a}}_2} \cup \bar{\sigma}_{\vec{b}^c} \right).
\]
Combining this formula with the above equality of integrals and summing over all $e$ produces the gluing relation corresponding to identifying boundary circles on two morphisms in the weighted TQFT.
\end{proof}

\subsection*{Plan of paper}
\S \ref{s:background1} gives a brief introduction to the quantum cohomology of Grassmannians and topological quantum field theories. \S \ref{s:explicit_tqfts} assumes Main Theorem (a) and uses previous work on the quantum cohomology TQFT to explicitly describe Witten's TQFT, the weighted TQFT, and the connections between these TQFTs, culminating in proofs of Main Theorem (b) and the Main Corollary.

The remainder of the paper is devoted to developing the intersection theory on Quot schemes required to justify the steps in the sketch of the proof of Main Theorem (a) given above. \S \ref{s:background2} summarizes some preliminaries on Quot schemes, including a description of the recursive structure of the boundary. \S \ref{s:quot_trivial} proves that Quot schemes of trivial bundles have a nice intersection theory when the subbundles have sufficiently negative degree. \S \ref{s:quot_general} shows that Quot schemes of general bundles \emph{all} have a nice intersection theory by embedding them in Quot schemes of trivial bundles. \S \ref{s:quot_nodal} extends the nice intersection theory to curves with a single node by relating the Quot scheme on the nodal curve to the Quot schemes on the smooth components, and concludes the proof of Main Theorem (a).

\subsection*{Summary of good properties of Quot schemes}

The Quot scheme is guaranteed to have nice properties in the following cases:
\begin{enumerate}
\item $\PP^1$, $V$ trivial, all $e$: $Q_{e,\PP^1}$ is irreducible, smooth, and of the expected dimension, and $U_{e,V}$ is open and dense (\cite{Gro95}).
\item $C$ smooth, $V$ arbitrary, $e \gg 0$: $Q_{e,V}$ is irreducible, generically smooth, and of the expected dimension, and $U_{e,V}$ is open and dense (\cite{PR03}).
\item $C$ smooth, $V$ very general, all $e$: $Q_{e,V}$ is equidimensional, generically smooth, and of the expected dimension, and $U_{e,V}$ is open and dense (Proposition \ref{p:quot_general}).
\item $C$ nodal, ($V$ trivial, $e \gg 0$) or ($V$ very general, all $e$): every top-dimensional component of $Q_{e,V}$ is generically smooth and of the expected dimension, and $U_{e,V}$ is open and dense in the union of top-dimensional components (Proposition \ref{p:quot_nodal}).
\end{enumerate}

In each of these cases, the intersections $W$ of general Schubert varieties are proper, $W \cap U_{e,V}$ is dense in $W$, and top intersections $W$ are finite, reduced, and contained in $U_{e,V}$ (see, respectively, \cite{Ber97} and Propositions \ref{p:quot_trivial_props}, \ref{p:quot_general}, \ref{p:quot_nodal}).

\subsection*{Acknowledgments} The author would like to thank Aaron Bertram for providing the vision that led to the weighted TQFT as well as guidance during countless discussions. The author is grateful to Nicolas Perrin and Daewoong Cheong for suggesting useful references.

\section{Background on quantum cohomology and TQFTs}\label{s:background1}
\subsection{Quantum Schubert calculus}\label{ss:quantum_schubert_calculus}

Let $G = \Gr(r,\CC^{r+s})$ denote the Grassmannian of $r$-planes in $\CC^{r+s}$, which is a projective variety of dimension $rs$. Let $V_{\bullet}$ denote a full flag $0 = V_0 \subsetneq V_1 \subsetneq V_2 \subsetneq \cdots \subsetneq V_{r+s}=\CC^{r+s}$ of linear subspaces of $\CC^{r+s}$. We call an $r$-tuple of integers $\vec{a} = (a_1,\dots,a_r)$ satisfying $s \ge a_1 \ge \cdots \ge a_r \ge 0$ a \emph{partition}. Define the \emph{Schubert variety}
\[
	W_{\vec{a}}(V_{\bullet}) := \{\, [E \subset \CC^{r+s}] \in G \mid \text{$\dim(E \cap V_{s+i-a_i}) \ge i$ for all $1 \le i \le r$} \, \},
\]
which has complex codimension in $G$ equal to the size $|\vec{a}| = \sum a_i$ of the partition. Let $\sigma_{\vec{a}} = [W_{\vec{a}}(V_{\bullet})]$ denote the associated \emph{Schubert cycle} in the cohomology ring $H^*(G,\CC)$; we drop $V_{\bullet}$ from the notation because the cohomology class of a Schubert variety does not depend on the flag.  The set $\{\, \sigma_{\vec{a}} \mid \text{$\vec{a}$ a partition as above} \, \}$ is a basis of $H^*(G,\CC)$ as a vector space.

In the basis of Schubert cycles, the \emph{Poincar\'{e} pairing} on $H^*(G,\CC)$ is
\[
	\langle \sigma_{\vec{a}},\sigma_{\vec{b}} \rangle = \# \left( W_{\vec{a}}(V_\bullet) \cap W_{\vec{b}}(V_\bullet') \right) = \int_G \sigma_{\vec{a}} \cup \sigma_{\vec{b}},
\]
where $V_\bullet$ and $V_\bullet'$ are general full flags and the number of intersection points is considered to be 0 if the intersection is infinite. A simple exercise shows that
\[
	\langle \sigma_{\vec{a}},\sigma_{\vec{b}} \rangle = \begin{cases} 1 & \text{if $\vec{b}=\vec{a}^c$;} \\
	0 & \text{otherwise} \end{cases}
\]
where $\vec{a}^c = (s-a_r,s-a_{r-1},\dots,s-a_1)$ denotes the partition complementary to $\vec{a}$. 

We extend the Poincar\'{e} pairing to all cohomology classes by linearity and to more than two arguments by setting
\[
	\langle \sigma_{\vec{a}_1},\dots,\sigma_{\vec{a}_N} \rangle := \int_G \sigma_{\vec{a}_1} \cup \cdots \cup \sigma_{\vec{a}_N}.
\]
By the above fact, we can write
\[
	\sigma_{\vec{a}_1} \cup \cdots \cup \sigma_{\vec{a}_N} = \sum_{\vec{b}} \langle \sigma_{\vec{a}_1},\dots, \sigma_{\vec{a}_N}, \sigma_{\vec{b}} \rangle\, \sigma_{\vec{b}^c}.
\]
There are classical formulas of Giambelli and Pieri for computing arbitrary cup products in the Schubert basis.

We define a quantum deformation of the cup product by introducing a new integer parameter $e \ge 0$. There is a smooth quasiprojective variety $\Mor_e(\PP^1,G)$ of dimension $(r+s)e + rs$ whose closed points are
\[
	\Mor_e(\PP^1,G) = \{\, \text{$\phi \colon \PP^1 \to G$ holomorphic} \mid \deg \phi = e \, \}.
\]
Given a choice of distinct points $p_1,\dots,p_N \in \PP^1$ and general Schubert varieties $W_{\vec{a}_1}$, $\dots$, $W_{\vec{a}_N}$, define the \emph{Gromov-Witten number} $\langle W_{\vec{a}_1},\dots,W_{\vec{a}_N} \rangle_e$ as
\[
	\# \, \{ \, \phi \in \Mor_e(\PP^1,G) \mid \text{$\phi(p_i) \in W_{\vec{a}_i}$ for all $1 \le i \le N$} \, \}
\]
when this number is finite and zero otherwise. Requiring $\phi(p_i) \in W_{\vec{a}_i}$ is a codimension-$|\vec{a}_i|$ condition in $\Mor_e(\PP^1,G)$, hence a necessary condition for $\langle W_{\vec{a}_1},\dots,W_{\vec{a}_N} \rangle_e$ to be nonzero is that the total size of the partitions matches the dimension of $\Mor_e(\PP^1,G)$, namely $\sum |\vec{a}_i| = (r+s)e+rs$.

\begin{rmk}\label{r:gw_independent} As the notation suggests, the Gromov-Witten numbers do not depend on the choice of points or the general full flags used to define the Schubert varieties. To prove this, one redefines them as follows. By the universal property of the Grassmannian, pulling back the universal subbundle on $G$ under a morphism $\phi \colon \PP^1 \to G$ defines a bijection
\[
	\Mor_e(\PP^1,G) \to \{ \, E \subset \OO_{\PP^1}^{r+s} \mid \text{$E$ is a rank $r$ vector subbundle of degree $-e$} \,\}.
\]
The Quot scheme $Q_{e,\PP^1} := \Quot\big( (r,-e), \OO_{\PP^1}^{r+s} \big)$ compactifies $\Mor_e(\PP^1,G)$ by allowing inclusions $E \subset \OO_{\PP^1}^{r+s}$ that drop rank at finitely many points of $\PP^1$. Given a point $p \in \PP^1$ and a Schubert variety $W_{\vec{a}}(V_{\bullet})$ in $G$, we define a subvariety $\overline{W}_{\vec{a}}(p,V_{\bullet})$ of $Q_{e,\PP^1}$ by taking the closure of the set of $\phi \in \Mor_e(\PP^1,G)$ satisfying $\phi(p) \in W_{\vec{a}}(V_{\bullet})$. Since the cohomology class $\bar{\sigma}_{\vec{a}}$ of $\overline{W}_{\vec{a}}(p,V_{\bullet})$ is independent of $p$ and $V_{\bullet}$, so are the numbers 
\[
	\int_{Q_{e,\PP^1}} \bar{\sigma}_{\vec{a}_1} \cup \cdots \cup \bar{\sigma}_{\vec{a}_N},
\]
which coincide with the Gromov-Witten numbers defined above.
\end{rmk}

\begin{ex}[$e=0$]\label{ex:gw_deg0} Since degree 0 maps to the Grassmannian are constant, $\Mor_e(\PP^1,G) = Q_{e,\PP^1} \simeq G$. Thus $\langle W_{\vec{a}_1},\dots,W_{\vec{a}_N} \rangle_0 = \langle \sigma_{\vec{a}_1},\dots,\sigma_{\vec{a}_N} \rangle.$
\end{ex}

\begin{ex} $\langle W_{\vec{a}},W_{\vec{b}} \rangle_e = 0$ for all $e > 0$. Since the automorphism group of $\PP^1$ acts 3-transitively, imposing conditions at only two points of $\PP^1$ results in an infinite collection of maps.
\end{ex}

We now use the Gromov-Witten numbers to define a ``small quantum deformation'' of the cup product, called the \emph{quantum product}:
\[
	\sigma_{\vec{a}_1} * \cdots * \sigma_{\vec{a}_N} := \sum_{e \ge 0} q^e \left( \sum_{\vec{b}} \langle W_{\vec{a}_1},\dots,W_{\vec{a}_N},W_{\vec{b}} \rangle_e \, \sigma_{\vec{b}^c} \right).
\]
By Example \ref{ex:gw_deg0}, the $e=0$ term is $\sigma_{\vec{a}_1} \cup \cdots \cup \sigma_{\vec{a}_N}$. Here $q$ is a formal variable of degree $r+s$ that makes the product homogeneous, which we will usually omit (by setting $q=1$). The sum is finite since $e \gg 0$ (namely $(r+s)e + rs > (N+1)rs$) makes it impossible for an intersection of $N+1$ Schubert varieties in $Q_{e,\PP^1}$ to be zero-dimensional. The quantum product is clearly commutative, but it takes a good deal of work to show it is associative (\cite{RT95}, \cite{KM97}). Since $*$ is associative, all Gromov-Witten numbers (and hence all quantum products) are determined by the three-point numbers $\langle W_{\vec{a}},W_{\vec{b}},W_{\vec{c}} \rangle_e$. Quantum products can be computed using quantum versions of the Giambelli and Pieri formulas (\cite{Ber97}).

\begin{rmk} A convenient way of writing the quantum product is
\[
	\sigma_{\vec{a}_1}*\cdots*\sigma_{\vec{a}_N} = \sum_{\vec{b}} \left( \int_{Q_{\PP^1}} \bar{\sigma}_{\underline{\vec{a}}} \cup \bar{\sigma}_{\vec{b}} \right) \sigma_{\vec{b}^c},
\]
where $Q_{\PP^1} = \sqcup_e Q_{e,\PP^1}$. The associativity of the quantum product is equivalent to the identity
\[
	\int_{Q_{\PP^1}} \bar{\sigma}_{\underline{\vec{a}}} \cup \bar{\sigma}_{\vec{b}} \cup \bar{\sigma}_{\vec{c}} = \int_{Q_{\PP^1}} \bar{\sigma}_{\underline{\vec{a}}} \cup \overline{\sigma_{\vec{b}} * \sigma_{\vec{c}}},
\]
where $\overline{\sigma_{\vec{b}} * \sigma_{\vec{c}}}$ means writing $\sigma_{\vec{b}} * \sigma_{\vec{c}}$ in the Schubert basis and applying ``bar'' to each Schubert cycle. For the purpose of computing integrals with Schubert cycles on $Q_{\PP^1}$, $\text{``bar''} \circ * = \cup \circ \text{``bar''}$.
\end{rmk}

\begin{ex}\label{ex:special_properties} The quantum Pieri's formula implies that $\sigma_{1^r}$ and $\sigma_s$ have the following special properties:
\begin{enumerate}
\item $\sigma_{1^r} * \sigma_{a_1,\dots,a_r} = \begin{cases} \sigma_{a_1+1,a_2+1,\dots,a_r+1} & \text{if $a_1 < r$;} \\ \sigma_{a_2,\dots,a_r,0} & \text{if $a_1=r$} \end{cases}$;
\item $\sigma_{1^r} * \sigma_s = 1$;
\item $\sigma_{1^r}^{r+s} = 1$.
\end{enumerate}
Combining (3) and the associativity of $*$ yields the identity
\[
	\int_{Q_{\PP^1}} \bar{\sigma}_{\underline{\vec{a}}} \cup \bar{\sigma}_{1^r}^{r+s} = \int_{Q_{\PP^1}} \bar{\sigma}_{\underline{\vec{a}}}.
\]
\end{ex}

Equipping the vector space $H^*(G,\CC)$ with the quantum product instead of the cup product yields the \emph{small quantum cohomology ring} $QH^*(G)$ (other authors choose to remember the variable $q$). If we also consider the Poincar\'{e} pairing, which is compatible with the quantum product in the sense that
\[
	\langle \sigma_{\vec{a}} * \sigma_{\vec{b}}, \sigma_{\vec{c}} \rangle = \sum_{e \ge 0} \langle W_{\vec{a}},W_{\vec{b}},W_{\vec{c}}  \rangle_e =  \langle \sigma_{\vec{a}}, \sigma_{\vec{b}} * \sigma_{\vec{c}} \rangle,
\]
then $QH^*(G)$ has the structure of a Frobenius algebra. A geometric way to visualize this structure is as a topological quantum field theory, which we discuss in the next subsection.

\subsection{Topological quantum field theories}\label{ss:tqfts}

The terse description of topological quantum field theories in this section is based on a nicer exposition with motivation and pictures in \cite{Ren05}.

A (two-dimensional) \emph{topological quantum field theory} (TQFT) is a functor of tensor categories
\[
	F \colon \mathrm{2Cob} \to \mathrm{Vect}_\CC.
\]
The category $\text{2Cob}$ is composed of
\begin{enumerate}[(a)]
\item objects: finite disjoint unions of oriented circles;
\item morphisms: equivalence classes of oriented cobordisms, which are oriented topological surfaces with oriented boundary circles;
\item composition: concatenation of cobordisms by gluing boundary circles on one surface to boundary circles on another surface;
\item tensor structure: disjoint union.
\end{enumerate}
Let $H$ denote the image $F(S^1)$, which is a vector space. Since $F$ is a functor of tensor categories, the image of $n$ disjoint copies of $S^1$ is $H^{\otimes n}$ and the image of the empty union of circles is the base field $\CC$. The oriented topological genus $g$ surface with $m+n$ boundary circles, denoted by $\Sigma(g)_m^n$ and viewed as a cobordism between the disjoint union of $m$ circles and the disjoint union of $n$ circles, gets mapped by $F$ to a linear transformation $F(g)_m^n \colon H^{\otimes m} \to H^{\otimes n}$. Thus each surface with boundary specifies an algebraic structure on the tensor powers of $H$ and these algebraic structures must satisfy a large number of relations coming from gluing the topological surfaces.
\begin{ex} \begin{enumerate}[(a)]
\item The cylinder $\Sigma(0)_1^1$ is the identity under concatenation of cobordisms, hence $F(0)_1^1 \colon H \to H$ is the identity map.
\item The closed torus $\Sigma(1)_0^0$ can be obtained by gluing the ends of the cylinder. Gluing the ends corresponds to taking the trace, so $F(1)_0^0 \colon \CC \to \CC$ is determined by $1 \mapsto \dim H$.
\item The ``pair of pants'' $\Sigma(0)_2^1$ defines a product $F(0)_2^1 \colon H \otimes H \to H$ on $H$.
\item The ``cap'' $\Sigma(0)_0^1$ composed with the pair of pants produces the cylinder, so the map $F(0)_0^1 \colon \CC \to H$ picks out the multiplicative identity in $H$.
\item The ``macaroni'' $\Sigma(0)_2^0$ corresponds to a pairing $F(0)_2^0 \colon H \otimes H \to \CC$.
\item Powers of $\Sigma(1)_1^1$ can be used to produce every $\Sigma(g)_1^1$. We call $F(1)_1^1$ the \emph{genus-addition operator}.
\item We think of $F(g)_0^0$, the closed invariants of the TQFT, as complex numbers. Since $\Sigma(g)_0^0$ can be obtained by concatenating $g-1$ copies of $\Sigma(1)_1^1$ and gluing the ends, we see that $F(g)_0^0$ can be computed as the trace of the $(g-1)$th power of the genus-addition operator.
\end{enumerate}
\end{ex}

By decomposing surfaces into pairs of pants, we can write every morphism as a composition of genus 0 surfaces with $\le 3$ boundary circles. In fact, $F$ is determined by $F(0)_2^1$, which defines a product $H \otimes H \to H$, and $F(0)_2^0$, which defines a pairing $H \otimes H \to \CC$. The key idea is that the pairing determines an isomorphism $\phi \colon H \simeq H^*$ sending each orthonormal basis to its dual basis, which can be used to ``flip'' the linear maps in the TQFT: to get $F(g)_n^m$, simply take the transpose of $F(g)_m^n$ and identify each $H^*$ with $H$ using the pairing. Moreover, the composition relations on the TQFT imply that this product and pairing must satisfy the compatibility condition of a Frobenius algebra. Thus a TQFT determines a Frobenius algebra and vice versa.

\begin{ex}[Quantum cohomology TQFT] Equipping the small quantum cohomology ring $QH^*(G)$ with the Poincar\'{e} pairing defines a Frobenius algebra, which determines a TQFT that we call the \emph{quantum cohomology TQFT}. A detailed description of this TQFT is given in \S\ref{ss:qctqft}.
\end{ex}

\begin{ex}[Witten's TQFT]\label{ex:witten_tqft} Equipping $QH^*(G)$ with a different pairing
\[
	\langle \sigma_{\vec{a}},\sigma_{\vec{b}}  \rangle_{\text{W}} := \langle \sigma_{\vec{a}}*\sigma_{\vec{b}},\sigma_{s^r} \rangle
\]
produces a different TQFT that we call \emph{Witten's TQFT}. We show in \S\ref{ss:Wtqft} that this description of Witten's TQFT coincides with the description given in \S\ref{ss:motivation2} and that the closed invariants $F(g)_0^0$ of Witten's TQFT are the Verlinde numbers $V_g^{r,s}$ introduced in \S\ref{ss:motivation1}. Thus the Verlinde numbers can be computed by taking the trace of the $(g-1)$th power of the genus addition operator (whose eigenvalues are the products of sines in the formula for $V_g^{r,s}$). 
\end{ex}

\begin{rmk} To emphasize the connection between TQFTs and Frobenius algebras and simplify notation, we usually write $v \cdot w$ instead of $F(0)_2^1(v \otimes w)$ and $\langle v,w \rangle$ instead of $F(0)_2^0(v \otimes w)$. However, there are several advantages to interpreting Frobenius algebras as TQFTs. First, some of the TQFTs we consider in the next sections are related to algebraic curves, with the cobordisms encoding information about curves with marked points. The gluing relations reflect what happens to that information when the curve degenerates to a nodal curve. Second, the TQFT emphasizes different features of the data than the Frobenius algebra does. In particular, the closed invariants of a TQFT seem obscure from the Frobenius algebra point of view, but they often encode interesting enumerative data, which the TQFT is set up to compute.
\end{rmk}

Choosing a basis $\{ f_i \}$ of $H$ that is orthonormal with respect to the pairing simplifies computations in the TQFT. For instance, the genus-addition operator can be computed as the partial composition of the copairing $F(0)_0^2$ and the product $F(0)_3^1$. Since the pairing is determined by $f_i \otimes f_j \mapsto \langle f_i,f_j \rangle$, the copairing is determined by $1 \mapsto \sum_{i,j} \langle f_i,f_j \rangle f_i \otimes f_j = \sum_i f_i \otimes f_i$. Composing with $F(0)_3^1$, we see that $F(1)_1^1$ is multiplication by $\sum_i f_i^2$.

Computations can be further simplified if the TQFT is \emph{semisimple}, namely if there is an orthonormal basis $\{ e_i \}$ of $H$ such that $e_i^2 = \lambda_i e_i$ for some $0 \ne \lambda_i \in \CC$ and $e_i \cdot e_j = 0$ for $i \ne j$. We call $\{ e_i \}$ a \emph{semisimple basis} and $\{ \, \lambda_i \, \}$ the \emph{semisimple values} of the TQFT. For a semisimple TQFT, the multiplicative unit is $\sum_i \lambda_i^{-1} e_i$ and the genus-addition operator is $\cdot \sum_i \lambda_i e_i$, which in the semisimple basis is the diagonal matrix $\mathrm{diag}(\lambda_i^2)$. Thus, for a semisimple TQFT,
\[
	F(g)_0^0 = \sum_i \lambda_i^{2(g-1)}.
\]
It is known that the quantum cohomology TQFT is semisimple (\cite{Abr00}); we exhibit a semisimple basis in \S\ref{ss:qctqft}.

It can be useful to define variations of a TQFT that encode richer data. The morphisms in a \emph{weighted TQFT} are written as $F(g|d)_m^n$, where $d$ is a weight that is additive under composition. The collection of morphisms with $d=0$ is a usual TQFT. The key extra data in a weighted TQFT are the cylinders with weight $\pm 1$, which can be used to obtain any $F(g|d)_m^n$ from the weight-0 linear maps $F(g|0)_m^n$. Indeed, the data encoded by a weighted TQFT are equivalent to a Frobenius algebra equipped with an invertible operator satisfying some relations.

\begin{ex}[The weighted TQFT] We can construct a weighted TQFT containing the quantum cohomology TQFT as the $d=0$ slice and Witten's TQFT as a ``diagonal'' slice. Let the $d=0$ morphisms be exactly as in the quantum cohomology TQFT, and add the data of the invertible operator $* \sigma_{1^r}$, which we view as the cylinder with weight $-1$, and its inverse $*\sigma_s$, which we view as the cylinder of weight $1$. We show in \S\ref{ss:wtqft} that this description of a weighted TQFT agrees with the description in \S\ref{ss:synthesis} and that the linear map $F(g|s(g{-}1+n))_m^n$ in this weighted TQFT coincides with $F(g)_m^n$ from Witten's TQFT. In particular, the Verlinde numbers occur in the weighted TQFT as the closed invariants $F(g|s(g-1))_0^0$.
\end{ex}

\section{Explicit descriptions of the three TQFTs}\label{s:explicit_tqfts}
In this section, we give a detailed description of the quantum cohomology TQFT. We then give similar descriptions of Witten's TQFT and the weighted TQFT introduced in \S\ref{ss:wtqft} by interpreting the maps in terms of quantum cohomology (as was foreshadowed in \S\ref{ss:tqfts}).

\subsection{The quantum cohomology TQFT} \label{ss:qctqft}

Set $G=\Gr(r,\CC^{r+s})$ and let $QH^*(G)$ denote the small quantum cohomology ring, which is obtained by equipping the vector space $H^*(G,\CC)$ with the quantum product $*$ instead of the cup product.

As explained in \cite{Ber94}, a set of generators and relations for $QH^*(G)$ as a ring can be described as follows. As generators, we take the Chern classes $x_1,x_2,\dots,x_r$ of the dual of the universal subbundle on $G$, which coincide with the Schubert cycles $\sigma_1,\sigma_{1,1},\dots,\sigma_{1^r}$. We can view the $x_i$ as elementary symmetric polynomials in the Chern roots $q_1,\dots,q_r$. Set $W = \sum_i \frac{1}{r+s+1} q_i^{r+s+1}$. Then the relations for the cohomology ring $H^*(G,\CC)$ are given by $\partial W/\partial x_i$. The quantum cohomology ring is obtained by replacing the relation $\partial W/\partial x_1$ by $\partial W/\partial x_1 + (-1)^r$.

It is often useful to consider the scheme $\Spec QH^*(G)$, which is a collection of $\binom{r+s}{r}$ reduced points. The $x_i$-coordinates of these points can be described by letting the $q_i$ vary over each choice of $r$ distinct $(r+s)$th roots of $(-1)^{r+1}$. Fix $\zeta = e^{2\pi i/(r+s)}$. Let $I \subset \{ \, 1,2,\dots,r+s \, \}$ denote a subset of cardinality $r$. Write $\zeta^I$ for the point obtained by choosing the roots $\{\, \zeta^{j} \mid j \in I \,\}$ in the case when $r$ is odd or $\{\, \zeta^{j + 1/2} \mid j \in I \,\}$ in the case when $r$ is even. Then the evaluation of $\sigma_{\vec{a}}$ (viewed as a polynomial in the $x_i$) at the point $\zeta^I$, which we denote by $\sigma_{\vec{a}}(\zeta^I)$, coincides with evaluating the Schur polynomial associated to the partition $\vec{a}$ at the roots defining $\zeta^I$. A quantity that often shows up in formulas is the square of the Vandermonde norm, which is defined as
\[
\mathrm{Vand}(\zeta^I) := \left\|\prod_{j,k \in I,\, j \ne k} (\zeta^j - \zeta^k) \right\|
\]
(or 1 if $|I|=1$). We do not have to worry about the parity of $r$ since the factors of $\zeta^{1/2}$ do not affect the norm.

The following proposition is essentially contained in \cite{Rie01}.
\begin{prop}[Concrete description of the quantum cohomology TQFT]\label{p:qctqft}{\ }
\begin{enumerate}[(a)]
\item The basis $\{ \, \sigma_I := \sum_{\vec{a}} \overline{\sigma_{\vec{a}}(\zeta^I)} \sigma_{\vec{a}} \, \}$ has the property that
\[
	\sigma_{\vec{a}} * \sigma_I = \sigma_{\vec{a}}(\zeta^I) \sigma_I
\]
for all $\vec{a}$ and satisfies
\[
	\sigma_I * \sigma_J = \begin{cases}
    	a_I \sigma_I & \text{if $I = J$;}\\
        0 & \text{if $I \ne J$}
    \end{cases}
    \quad \text{for }
    a_I = \frac{(r+s)^r}{\mathrm{Vand}(\zeta^I)}.
\]

\item A semisimple basis is $\{ \, e_I := \left( \sigma_{s^r}(\zeta^I) / a_I \right)^{1/2} \sigma_I \, \}$, with semisimple values $\{ \, \lambda_I = (a_I \sigma_{s^r}(\zeta^I))^{1/2} \, \}$.

\item The genus-addition operator is $* \sum_I \sigma_{s^r}(\zeta^I) \sigma_I$ and its $e_I$-eigenvalue is $\lambda_I^2 = a_I \sigma_{s^r}(\zeta^I)$.
\end{enumerate}
\end{prop}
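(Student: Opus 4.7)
The plan rests on the semisimplicity of $QH^*(G)$: as recalled in the paragraph preceding the proposition, $\Spec QH^*(G)$ consists of the $\binom{r+s}{r}$ reduced points $\zeta^I$, so evaluation at those points defines a ring isomorphism $\ev \colon QH^*(G) \xrightarrow{\sim} \prod_I \CC$. Let $\epsilon_I \in QH^*(G)$ be the pullback of the standard basis vector at $I$; then $\epsilon_I * \epsilon_J = \delta_{IJ} \epsilon_I$, $\sum_I \epsilon_I = 1$, and $x * \epsilon_I = x(\zeta^I) \epsilon_I$ for every $x$. The two formulas in part (a) would then follow at once from these identities, provided we establish $\sigma_I = a_I \epsilon_I$.

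To prove $\sigma_I = a_I \epsilon_I$, I would evaluate both sides at each $\zeta^J$. Since $\epsilon_I(\zeta^J) = \delta_{IJ}$ and, by the definition of $\sigma_I$, we have $\sigma_I(\zeta^J) = \sum_{\vec{a}} \overline{\sigma_{\vec{a}}(\zeta^I)} \sigma_{\vec{a}}(\zeta^J)$, the claim reduces to the orthogonality identity
\[
    \sum_{\vec{a}} \overline{\sigma_{\vec{a}}(\zeta^I)} \sigma_{\vec{a}}(\zeta^J) = a_I \, \delta_{IJ},
\]
where $\vec{a}$ ranges over partitions fitting in the $r \times s$ box. This is the main obstacle. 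I would establish it by writing each $\sigma_{\vec{a}}$ as a Schur polynomial in the Chern roots $q_i$ via the Weyl formula $s_{\vec{a}}(q) = \det(q_i^{a_j + r - j})/\det(q_i^{r - j})$ and summing over $\vec{a}$ using a finite root-of-unity analog of Cauchy's identity. The prefactor $(r+s)^r/\mathrm{Vand}(\zeta^I)$ should emerge from the Vandermonde denominators together with a character sum $\sum_{k=0}^{r+s-1} \zeta^{km}$ that collapses to $(r+s)\delta_{m \equiv 0}$. A computation along these lines appears in \cite{Rie01}, which I would follow.

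Granting (a), part (b) is routine. Substituting $e_I = (\sigma_{s^r}(\zeta^I)/a_I)^{1/2} \sigma_I$ into the product formula of (a) gives $e_I * e_J = 0$ for $I \ne J$ and $e_I * e_I = (a_I \sigma_{s^r}(\zeta^I))^{1/2} e_I$, identifying $\lambda_I$ as claimed. For the orthonormality $\langle e_I, e_J \rangle = \delta_{IJ}$, I would use Frobenius duality $\langle x, y \rangle = \theta(x * y)$, where $\theta(x) := \langle x, 1 \rangle$ picks out the coefficient of $\sigma_{s^r}$ in $x$. Then $\langle e_I, e_J \rangle = \delta_{IJ} \lambda_I \theta(e_I)$, and inspecting the definition of $\sigma_I$ gives $\theta(\sigma_I) = \overline{\sigma_{s^r}(\zeta^I)}$. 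Because $\sigma_{s^r} = (q_1 \cdots q_r)^s$ evaluates to a root of unity at $\zeta^I$, we have $\overline{\sigma_{s^r}(\zeta^I)} = \sigma_{s^r}(\zeta^I)^{-1}$, and a direct calculation yields $\lambda_I \theta(e_I) = 1$.

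For part (c), I would invoke the general semisimple TQFT formulas from \S\ref{ss:tqfts}: the genus-addition operator is multiplication by $\sum_I \lambda_I e_I$, with $e_I$-eigenvalue $\lambda_I^2$. Computing $\lambda_I e_I = \lambda_I (\sigma_{s^r}(\zeta^I)/a_I)^{1/2} \sigma_I = \sigma_{s^r}(\zeta^I) \sigma_I$ yields the stated description of the operator, and $\lambda_I^2 = a_I \sigma_{s^r}(\zeta^I)$ is immediate from (b).
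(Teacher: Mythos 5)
Your proposal is correct and follows essentially the same route as the paper: part (a) is reduced to the orthogonality identity from Rietsch (which the paper cites directly as \cite{Rie01} Prop.\ 4.3(3), whereas you sketch how one would rederive it, but both rely on the same fact), and parts (b) and (c) are established via the same computation of $\langle \sigma_I, \sigma_I\rangle$ using Frobenius compatibility and the observation that $\sigma_{s^r}(\zeta^I)$ is a root of unity, followed by the general semisimple-TQFT formula for the genus-addition operator.
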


\begin{proof} The orthogonality formula
\[
	\sum_{\vec{a}} \overline{\sigma_{\vec{a}}(\zeta^I)} \sigma_{\vec{a}}(\zeta^J) = \delta_{I,J} \frac{(r+s)^r}{\mathrm{Vand}(\zeta^I)}
\]
(\cite{Rie01} Prop. 4.3 (3)) implies that $\sigma_I$ is $a_I$ times the characteristic function of the point $\zeta^I$. This proves (a). To get a semisimple basis $\{ e_I = \sigma_I / \|\sigma_I\| \}$, we compute
\[
	\| \sigma_I \|^2 = \langle \sigma_I,\sigma_I \rangle = \langle \sigma_I^2, 1 \rangle = a_I \langle \sigma_I, 1 \rangle = a_I \, \overline{\sigma_{s^r}(\zeta^I)} = a_I/\sigma_{s^r}(\zeta^I),
\]
where the last equality holds since $\sigma_{s^r}(\zeta^I)$ is a product of roots of $(-1)^{r+1}$ and thus has norm 1. Choosing a square root for each $I$ gives the expression for $e_I$ in $(b)$. Moreover,
\[
	e_I^2 = (\sigma_{s^r}(\zeta^I)/a_I )^{1/2} a_I \, e_I = (a_I \, \sigma_{s^r}(\zeta^I))^{1/2} \, e_I
\]
gives the formula for $\lambda_I$. The genus-addition operator is computed as $\sum_I \lambda_I e_I$.
\end{proof}

\begin{ex}[$\PP^s$] When $r=1$, the Grassmannian is isomorphic to $\PP^s$, $x_1$ is the hyperplane class, the cohomology ring is $H^*(\PP^s,\CC) \simeq \CC[x_1]/(x_1^{s+1})$, and the quantum cohomology ring is $QH^*(\PP^s) \simeq \CC[x_1]/(x_1^{s+1}-1)$. The points of $\Spec QH^*(\PP^s)$ are $x_1=q_1=\zeta^j$ for $1 \le j \le s+1$ and $\zeta = e^{2\pi i/(r+s)}$. The Schur basis is $\{ \, 1,\sigma_1,\dots,\sigma_s \, \}$, where $\sigma_j = x_1^j$, and the semisimple basis is 
\[
	\left\{ \, e_j = \sqrt{\frac{\zeta^{js}}{s+1}} \sum_{k=0}^{s} \zeta^{-jk} \sigma_k \;\bigg| \; 1 \le j \le s+1 \, \right\}.
\]
\end{ex}

\subsection{Explicit description of Witten's TQFT}\label{ss:Wtqft}

Recall from \S\ref{ss:motivation2} that the maps in Witten's TQFT are defined as
\[
	F(g)_m^n \colon \sigma_{\vec{a}_1} \otimes \cdots \otimes \sigma_{\vec{a}_m} \mapsto \sum_{\vec{b}_1,\dots,\vec{b}_n} \left( \int_{[Q_C]^{\text{vir}}} \bar{\sigma}_{\underline{\vec{a}}} \cup \bar{\sigma}_{\underline{\vec{b}}} \cup \bar{\sigma}_{1^r}^{r(g+n)+s} \right) \, \sigma_{\vec{b}_1^c} \otimes \cdots \otimes \sigma_{\vec{b}_n^c},
\]
where $Q_{e,C} = \Quot\left((r,-e),\OO_C^{r+s}\right)$ and $Q_C = \bigsqcup_e Q_{e,C}$. 
Recall the identity at the end of Example \ref{ex:special_properties} and the fact that integrals on $Q_{\PP^1}$ define the Gromov-Witten numbers. Thus
\[
	F(0)_2^1 \colon \sigma_{\vec{a}} \otimes \sigma_{\vec{b}} \mapsto \sum_{\vec{c}} \sum_e \langle W_{\vec{a}},W_{\vec{b}},W_{\vec{c}} \rangle_e \, \sigma_{\vec{c}^c} = \sigma_{\vec{a}} * \sigma_{\vec{b}}
\]
is just the quantum product, while
\[
	F(0)_2^0 \colon \sigma_{\vec{a}} \otimes \sigma_{\vec{b}} \mapsto \int_{Q_{\PP^1}} \bar{\sigma}_{\vec{a}} \cup \bar{\sigma}_{\vec{b}} \cup \bar{\sigma}_{1^r}^s = \langle \sigma_{\vec{a}} * \sigma_{\vec{b}},\sigma_{s^r} \rangle
\]
is a pairing that picks out the coefficient of 1 in the quantum product, which is different from the Poincar\'{e} pairing. Witten's TQFT is simply the quantum cohomology ring $QH^*(G)$ equipped with this new pairing
\[
	\langle \sigma_{\vec{a}},\sigma_{\vec{b}} \rangle_W := \langle \sigma_{\vec{a}}*\sigma_{\vec{b}}, \sigma_{s^r} \rangle.
\]

\begin{prop}[Concrete description of Witten's TQFT]{\ } Define the basis $\{ \, \sigma_I := \sum_{\vec{a}} \overline{\sigma_{\vec{a}}(\zeta^I)} \sigma_{\vec{a}} \, \}$ and the constants $a_I := (r+s)^r/\mathrm{Vand}(\zeta^I)$ as above.
\begin{enumerate}[(a)]
\item A semisimple basis is $\{ \, \tilde{e}_I := a_I^{-1/2} \sigma_I \, \}$, with semisimple values $\{ \, \tilde{\lambda}_I = a_I^{1/2} \, \}$.

\item The genus-addition operator is $* \sum_I \sigma_I$ and its $e_I$-eigenvalue is $\tilde{\lambda}_I^2 = a_I$.
\end{enumerate}
\end{prop}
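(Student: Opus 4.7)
The plan is to reuse the $\sigma_I$-basis already constructed in Proposition~\ref{p:qctqft}, reinterpreting it under the new pairing $\langle\,,\,\rangle_W$ rather than starting from scratch. Both TQFTs share the same underlying algebra $QH^*(G)$, so the spectral data of the multiplication operators (eigenvectors and eigenvalues) are unchanged; only the normalization factors converting eigenvectors into an orthonormal basis depend on the pairing.

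First I would compute the Witten pairing on the $\sigma_I$ basis. Using $\sigma_I * \sigma_J = \delta_{IJ}\,a_I\,\sigma_I$ from Proposition~\ref{p:qctqft}(a), the computation collapses to evaluating $\langle \sigma_I, \sigma_{s^r}\rangle$. Expanding $\sigma_I = \sum_{\vec a}\overline{\sigma_{\vec a}(\zeta^I)}\,\sigma_{\vec a}$ in the Schubert basis and using $\langle \sigma_{\vec a}, \sigma_{s^r}\rangle = \delta_{\vec a, 0^r}$, I expect to pick out the $\vec a = 0^r$ term, giving $\langle \sigma_I, \sigma_{s^r}\rangle = \overline{\sigma_{0^r}(\zeta^I)} = 1$ (since $\sigma_{0^r} = 1$). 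Hence $\langle \sigma_I, \sigma_J\rangle_W = \delta_{IJ}\,a_I$, so $\{\sigma_I\}$ is already $\langle\,,\,\rangle_W$-orthogonal with squared norms $a_I$.

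The semisimplicity check is then forced. Setting $\tilde{e}_I := a_I^{-1/2}\,\sigma_I$ yields an orthonormal basis, and the multiplication rule for $\sigma_I$ gives $\tilde{e}_I * \tilde{e}_J = 0$ for $I \ne J$ and $\tilde{e}_I^2 = a_I^{-1} \cdot a_I\,\sigma_I = a_I^{1/2}\,\tilde{e}_I$, proving (a) with $\tilde{\lambda}_I = a_I^{1/2}$. Part (b) then follows from the general recipe at the end of \S\ref{ss:tqfts}: the genus-addition operator is multiplication by $\sum_I \tilde{\lambda}_I\,\tilde{e}_I = \sum_I a_I^{1/2}\cdot a_I^{-1/2}\,\sigma_I = \sum_I \sigma_I$, whose $\tilde{e}_I$-eigenvalue is $\tilde{\lambda}_I^2 = a_I$. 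I do not foresee any real obstacle — the spectral decomposition of $*$ is already contained in Proposition~\ref{p:qctqft}, and the only genuinely new input here is the one-line identity $\langle \sigma_I, \sigma_{s^r}\rangle = 1$, which replaces the factor $\overline{\sigma_{s^r}(\zeta^I)}$ that controlled the Poincaré normalization in the quantum cohomology TQFT.
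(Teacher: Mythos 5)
Your proposal is correct and matches the paper's proof: both reduce to the observation that $\|\sigma_I\|_W^2 = \langle \sigma_I^2, \sigma_{s^r}\rangle = a_I\langle\sigma_I,\sigma_{s^r}\rangle = a_I$, with the last equality coming from the fact that the coefficient of $1 = \sigma_{0^r}$ in $\sigma_I$ is $\overline{\sigma_{0^r}(\zeta^I)} = 1$. You simply spell out the Poincar\'{e}-pairing step $\langle\sigma_{\vec a},\sigma_{s^r}\rangle = \delta_{\vec a, 0^r}$ that the paper leaves implicit.
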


\begin{proof} The computation is similar to the proof of Proposition \ref{p:qctqft}. The key difference is that
\[
	\| \sigma_I \|_W^2 = \langle \sigma_I^2, \sigma_{s^r} \rangle = a_I \langle \sigma_I, \sigma_{s^r} \rangle = a_I
\] 
since the coefficient of 1 in each $\sigma_I$ is 1.
\end{proof}

\begin{rmk}\label{r:gao_comparison} Note that applying $*\sigma_{s^r}$ to the genus-addition operator in Witten's TQFT yields the genus-addition operator for quantum cohomology. Conversely, we can produce Witten's operator from the quantum cohomology operator by applying $*\sigma_{1^r}^r$.
\end{rmk}

Witten's TQFT has the remarkable property that the closed invariants are the Verlinde numbers $V_g^{r,s}$:

\begin{prop} In Witten's TQFT, $F(g)_0^0 = V_g^{r,s}$.
\end{prop}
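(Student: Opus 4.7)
The plan is to combine the semisimple formula for closed invariants of a TQFT with an elementary identity about products of differences of roots of unity.

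First I would apply the semisimple formula $F(g)_0^0 = \sum_i \lambda_i^{2(g-1)}$ (from \S\ref{ss:tqfts}) to Witten's TQFT. The previous proposition gives $\tilde\lambda_I^2 = a_I = (r+s)^r/\mathrm{Vand}(\zeta^I)$, so
\[
F(g)_0^0 \;=\; \sum_I a_I^{g-1} \;=\; \sum_I \left(\frac{(r+s)^r}{\mathrm{Vand}(\zeta^I)}\right)^{g-1}.
\]
Comparing with the Verlinde formula recalled in \S\ref{ss:motivation1}, it suffices to show
\[
a_I \;=\; \prod_{(j,k)\in I\times \bar I} \bigl|2\sin \pi \tfrac{j-k}{r+s}\bigr|.
\]

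The key identity is obtained by differentiating $x^{r+s}-1 = \prod_{k=1}^{r+s}(x-\zeta^k)$ and evaluating at $x=\zeta^j$, which gives
\[
\prod_{k\ne j}(\zeta^j - \zeta^k) \;=\; (r+s)\,\zeta^{-j},
\]
an expression of norm $r+s$. For each $j \in I$, I would split the left-hand factors according to whether $k \in I\setminus\{j\}$ or $k \in \bar I$, take norms, and then multiply the resulting equality over $j\in I$. The ``$k \in I\setminus\{j\}$'' factors assemble into $\mathrm{Vand}(\zeta^I)$, yielding
\[
\mathrm{Vand}(\zeta^I)\cdot \prod_{(j,k)\in I\times \bar I}\!|\zeta^j - \zeta^k| \;=\; (r+s)^r.
\]
Since $|\zeta^j-\zeta^k| = 2|\sin\pi(j-k)/(r+s)|$, this is exactly the identity $a_I = \prod_{(j,k)\in I\times\bar I}|2\sin\pi(j-k)/(r+s)|$, which combined with step one yields $F(g)_0^0 = V_g^{r,s}$.

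There is essentially no obstacle here: the whole proof rests on the semisimple description of Witten's TQFT established in the previous proposition, together with the derivative-of-$x^{r+s}-1$ trick. The only mildly delicate point is bookkeeping the split of the product into the $I\times I$ part and the $I\times\bar I$ part so that the Vandermonde norm appears correctly; the parity convention for $\zeta^I$ (with $\zeta^{j+1/2}$ when $r$ is even) is harmless since all factors $\zeta^{1/2}$ cancel pairwise in $\zeta^j-\zeta^k$.
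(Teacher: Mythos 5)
Your proof is correct and shares the paper's overall skeleton: compute $F(g)_0^0 = \sum_I a_I^{g-1}$ from the semisimple description of Witten's TQFT, then reduce the problem to the identity $a_I = \prod_{(j,k)\in I\times\bar I}|2\sin\pi\tfrac{j-k}{r+s}|$, equivalently $(r+s)^r = \mathrm{Vand}(\zeta^I)\prod_{(j,k)\in I\times\bar I}|2\sin\pi\tfrac{j-k}{r+s}|$. Where you diverge is in the proof of that identity. The paper establishes the unindexed identity $r+s = \prod_{n=1}^{r+s-1}2\sin\pi\tfrac{n}{r+s}$ by evaluating $p(x)=1+x+\cdots+x^{r+s-1}$ at $x=1$, then raises this to the $r$th power and closes with a combinatorial multiplicity count: for each $j\in I$ and each $n$ there is a unique $k$ with $j-k\equiv n \pmod{r+s}$, and the corresponding sine factor is supplied either by $\mathrm{Vand}(\zeta^I)$ (if $k\in I$) or by the $I\times\bar I$ product (if $k\in\bar I$), so both sides carry each $|2\sin\pi\tfrac{n}{r+s}|$ with multiplicity $r$. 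You instead differentiate $x^{r+s}-1=\prod_k(x-\zeta^k)$ at $x=\zeta^j$ to get the per-index identity $\prod_{k\ne j}|\zeta^j-\zeta^k| = r+s$, multiply over $j\in I$, and split the inner product over $k\in I\setminus\{j\}$ (assembling $\mathrm{Vand}(\zeta^I)$) and $k\in\bar I$ (assembling the sine product). The two arguments are equivalent in substance, but your localized version makes the $I$ versus $\bar I$ split a direct factorization rather than a multiplicity count, which is a mild but genuine streamlining of the paper's bookkeeping. Your remark about the $\zeta^{1/2}$ factors is also consistent with the paper's observation that $\mathrm{Vand}$ is insensitive to the parity convention.
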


\begin{proof} We compute $F(g)_0^0$ as the trace of the $(g-1)$th power of the genus-addition operator. Since the eigenvalues of the genus-addition operator are $a_I$, it suffices to prove that $a_I = \prod_{(j,k) \in I \times \bar{I}} |2 \sin \pi \tfrac{j-k}{r+s}|$ because then the claim follows by taking sums of $(g-1)$th powers. We can rewrite this equation as
\begin{equation}\label{eq:witten_verlinde}
	(r+s)^r = \mathrm{Vand}(\zeta^I) \prod_{(j,k) \in I \times \bar{I}} |2 \sin \pi \tfrac{j-k}{r+s}|.
\end{equation}
The key identity is
\[
	r+s = \prod_{n=1}^{r+s-1} 2 \sin \pi \tfrac{n}{r+s},
\]
which can be proved by rewriting $2 \sin \pi \tfrac{n}{r+s} = i(\zeta^{-n/2} - \zeta^{n/2}) = i\zeta^{-n/2}(1-\zeta^n)$ and using the fact that the $\zeta^n$ are all the roots of the polynomial $p(x) = 1 + x + \cdots + x^{r+s-1}$, hence $\prod_n (1-\zeta^n) = p(1) = r+s$.

Now, we use a similar trick to write the Vandermonde norm as
\[
	\mathrm{Vand}(\zeta^I) = \prod_{j,k \in I,\, j \ne k} |2 \sin \pi \tfrac{j-k}{r+s}|
\]
and count how many times $|2 \sin \pi \tfrac{n}{r+s}|$ occurs for each $n$. On the left side of (\ref{eq:witten_verlinde}), each $|2 \sin \pi \tfrac{n}{r+s}|$ has multiplicity $r$. On the right side, for each $j \in I$ and each $n$, there is a unique solution to $j - k \equiv n \pmod{r+s}$ chosen from $\{ 1,\dots,r+s \}$; the corresponding factor is supplied by the Vandermonde norm when the solution $k$ is in $I$. Since each $j \in I$ yields a unique solution in either $I$ or $\bar{I}$, each $|2 \sin \pi \tfrac{n}{r+s}|$ again has multiplicity $r$.
\end{proof}

\subsection{The weighted TQFT}\label{ss:wtqft}

Recall that the weighted TQFT is defined by the maps
\[
	F(g|d)_m^n \colon \sigma_{\vec{a}_1} \otimes \cdots \otimes \sigma_{\vec{a}_m} \mapsto \sum_{\vec{b}_1,\dots,\vec{b}_n} \left( \int_{Q_V} \bar{\sigma}_{\underline{\vec{a}}} \cup \bar{\sigma}_{\underline{\vec{b}}} \right) \, \sigma_{\vec{b}_1^c} \otimes \cdots \otimes \sigma_{\vec{b}_n^c},
\]
where $C$ is a curve of genus $g$, $V$ is a general stable vector bundle on $C$ of rank $r+s$ and degree $d$, and $Q_V = \bigsqcup_{e \in \ZZ} Q_{e,V}$. We prove in \S\ref{s:quot_general} that (after twisting by a line bundle) $V$ can be constructed as a general elementary modification of a trivial bundle, and use this to show that $Q_{e,V}$ has the expected dimension for all $e$. 

Since stable (namely, balanced) bundles of degree 0 on $\PP^1$ are trivial, the maps $F(0|0)_m^n$ are determined by the Gromov-Witten numbers, and in particular $F(0|0)_2^1$ is the quantum product $*$ and $F(0|0)_2^0$ is the Poincar\'{e} pairing $\langle \, , \, \rangle$. Thus the $d=0$ slice of the weighted TQFT is exactly the quantum cohomology TQFT.

We now compute the degree-lowering operator $F(0|{-}1)_1^1$. The balanced bundle of degree $-1$ can be obtained as the kernel $V$ of a general elementary modification $0 \to V \to \OO_{\PP^1}^{r+s} \to \CC_q \to 0$. Moreover, the image of the embedding $\iota \colon Q_{e,V} \hookrightarrow Q_{e,\PP^1}$ is exactly $\overline{W}_{1^r}(q)$ and the pullback under $\iota$ of a Schubert variety $\overline{W}_{\vec{a}}$ is again of type $\overline{W}_{\vec{a}}$ on $Q_{e,V}$ (again, see \S\ref{s:quot_general}). Thus
\[
	\int_{Q_V} \bar{\sigma}_{\underline{\vec{a}}} = \int_{Q_{\PP^1}} \bar{\sigma}_{\underline{\vec{a}}} \cup \bar{\sigma}_{1^r},
\]
which allows us to compute
\[
	F(0|{-}1)_1^1 \colon \sigma_{\vec{a}} \mapsto \sum_{\vec{b}} \left( \int_{Q_V} \bar{\sigma}_{\vec{a}} \cup \bar{\sigma}_{\vec{b}} \right) \sigma_{\vec{b}^c} = \sum_{\vec{b}} \left( \int_{Q_{\PP^1}} \bar{\sigma}_{\vec{a}} \cup \bar{\sigma}_{\vec{b}} \cup \bar{\sigma}_{1^r} \right) \sigma_{\vec{b}^c} = \sigma_{\vec{a}} * \sigma_{1^r},
\]
namely the degree-lowering operator is $*\sigma_{1^r}$. The properties in Example \ref{ex:special_properties} imply that the inverse operator $F(0|1)_1^1$ is $*\sigma_s$ and that $F(g|d)_m^n = F(g|d+r+s)_m^n$.

We can now see how the weighted TQFT contains Witten's TQFT. It follows from $F(0)_1^1 = \mathrm{id} = F(0|0)_1^1$ and $F(0)_2^1 = * = F(0|0)_2^1$ that $F(0)_m^1 = F(0|0)_m^1$ for all $m \ge 1$. The shift in degrees occurs because of the pairing. Since $\langle \sigma_{\vec{a}},\sigma_{\vec{b}} \rangle_W = \langle \sigma_{\vec{a}} * \sigma_{\vec{b}},\sigma_{s^r} \rangle = \langle \sigma_{\vec{a}}*\sigma_{\vec{b}}*\sigma_{s^r}, 1 \rangle$, Witten's pairing can be obtained by (partially) precomposing the Poincar\'{e} pairing with $F(0|r)_1^1 = F(0|{-}s)_1^1 = *\sigma_{s^r}$. Thus $F(0)_2^0 = F(0|-s)_2^0$. Since the partial composition of $F(0)_2^0$ and $F(0)_0^2$ produces the cylinder $F(0)_1^1$, which has degree 0 in the weighted TQFT, we obtain $F(0)_0^2 = F(0|s)_0^2$. Moreover, it follows from Remark \ref{r:gao_comparison} that $F(1)_1^1 = F(1|s)_1^1$. Now, to compute the degree of $F(g)_m^n$ as a map in the weighted TQFT, we write $F(g)_m^n$ as a partial composition of $F(0)_{m+n-1}^1$ with $g$ copies of $F(1)_1^1$ (to increase the genus) and $n-1$ copies of $F(0)_0^2$ (to move a copy of the vector space from the domain to the codomain), and add up the degrees of these maps. We already observed that . Thus each of the $g+n-1$ maps in the composition increases the degree by $s$, so
\[
	F(g)_m^n = F(g|s(g{-}1{+}n))_m^n
\]
and in particular
\[
	V_g^{r,s} = F(g)_0^0 = F(g|s(g{-}1))_0^0.
\]
As mentioned in \S\ref{ss:wtqft}, this last fact says that when $V$ is general of degree $s(g-1)$, the integral of the fundamental class on $Q_V$, which counts the points of the 0-dimensional Quot scheme $Q_{0,V}$, yields the Verlinde number $V_g^{r,s}$.

\subsection{Enumerating all finite Quot schemes}
The weighted TQFT recovers the formula of Holla (\cite{Hol04}) that enumerates the points of all finite Quot schemes of general bundles on curves, including cases that are not computable within Witten's TQFT. For a general bundle $V$ of degree $d$, the Quot scheme $Q_{e,V}$ is finite if and only if $rd + (r+s)e - (g-1)rs = 0$. Setting $a = \gcd(r,s)$, $r' = r/a$, and $s' = s/a$, the full solution set is
\[
	\{ \, e = r' \gamma, d = (g-1)s - (r'+s')\gamma \mid \gamma \in \ZZ \, \};
\]
imposing the equivalence relation of twisting both the subsheaves and $V$ by line bundles, we can think of $\gamma \in \ZZ/a\ZZ$.
The finite Quot schemes whose points are counted by the Verlinde numbers come from the solution $0 \in \ZZ/a\ZZ$. For $r$ and $s$ relatively prime, these are all the solutions, but when $a > 1$ there are others. A natural choice of representatives of the solution classes is $e = r'\gamma$ and $d = -(r'+s')\gamma+s(g-1)$ for $0 \le \gamma < a$. We compute $F\left(g|{-}(r'+s')\gamma+s(g{-}1)\right)$ by taking the trace of the composition of the $(g-1)$th power of Witten's genus-addition operator and the $(r'+s')\gamma$th power of the degree-lowering operator. Since the $e_I$-eigenvalue of the degree-lowering operator is $\sigma_{1^r}(\zeta^I)$, we obtain the formula
\[
	\# Q_{r'\gamma,V} = (r+s)^{r(g-1)} \sum_I \left( \frac{\sigma_{1^r}(\zeta^I)^{(r'+s')\gamma}}{\mathrm{Vand}(\zeta^I)}\right)^{g-1},
\]
thus proving the Main Corollary in \S \ref{ss:synthesis}.

\subsection{More geometric interpretation of the weighted TQFT}

Because the $Q_{e,V}$ all have the expected dimension, we can interpret the maps $F(g|d)_0^N$ geometrically as follows. Given a curve $C$ of genus $g$, a general vector bundle $V$ on $C$ of degree $d$, and distinct points $p_1,\dots,p_{N}$ on $C$, there is a rational map
\[
	\ev_{p_1,\dots,p_N}^{V} \colon \bigsqcup_{e \in \ZZ} Q_{e,V} \dashrightarrow G^{N}
\]
defined by restricting subsheaves $E \subset V$ to their fibers at each $p_i$. Let
\[
	\eta_{g,d,N} \in H^*(G,\CC)^{\otimes N}
\]
denote the sum of the pushforwards of the fundamental classes of the $Q_{e,V}$ (which do not depend on the choices of curve, general vector bundle in moduli, or points $p_i$).

\begin{rmk} The data encoded by the weighted TQFT are equivalent to these $\eta_{g,d,N}$. To get $F(g|d)_0^N$, simply define the linear map $\CC \to H^*(G,\CC)^{\otimes N}$ by $1 \mapsto \eta_{g,d,N}$. Choosing a partition $N = m + n$, dualizing the first $m$ copies of $H^*(G,\CC)$ yields a linear map $(H^*(G,\CC)^*)^{\otimes m} \to H^*(G,\CC)^{\otimes n}$, and Poincar\'{e} duality yields an isomorphism between $H^*(G,\CC)$ and its dual, hence the class $\eta_{g,d,m+n}$ determines a linear map
\[
	F(g|d)_m^n \colon H^*(G,\CC)^{\otimes m} \to H^*(G,\CC)^{\otimes n}
\]
that coincides with the map defined in the weighted TQFT.
\end{rmk}

The dimension of $Q_{e,V}$ is $rd+(r+s)e - (g-1)rs$, so its image in $G^N$ is a class of degree $(N+g-1)rs - rd - (r+s)e$. It can be convenient to distinguish the image classes of $Q_{e,V}$ in $G^N$ by multiplying them by $q^e$, where $q$ is a formal variable. Viewing $q$ as having degree $r+s$ makes $\eta_{g,d,N}$ into a homogeneous class of degree $(N+g-1)rs - rd$.

We can use the weighted TQFT relations (with the formal variable $q$) to describe how these image classes change as we modify $C$, $V$, and $N$. Let $\pi_k \colon G^N \to G$ denote projection to the $k$th factor and $\hat{\pi}_k \colon G^{N} \to G^{N-1}$ denote projection to all but the $k$th factor. Then for any $1 \le k \le N$, the following are some identities implied by the weighted TQFT:
\begin{enumerate}
\item Genus-addition: $\eta_{g+1,d,N} = \eta_{g,d,N} * \pi_k^* \left( \sum_{\vec{a}} \sigma_{\vec{a}} * \sigma_{\vec{a}^c} \right)$;
\item Degree-lowering: $\eta_{g,d-1,N} = \eta_{g,d,N} * \pi_k^* (\sigma_{1^r})$;
\item Degree-raising: $\eta_{g,d+1,N} = \eta_{g,d,N} * \pi_k^* (\sigma_s * q^{-1})$;
\item Forgetful: $\eta_{g,d,N-1} = (\hat{\pi}_k)_* \eta_{g,d,N}$.
\end{enumerate}
Here $*\sum_{\vec{a}} \sigma_{\vec{a}} * \sigma_{\vec{a}^c}$ is another formula for the genus-addition operator. The classes $\sigma_{1^r}$ and $\sigma_s*q^{-1}$ are multiplicative inverses in quantum cohomology. The degrees of the classes $\sum_{\vec{a}} \sigma_{\vec{a}} * \sigma_{\vec{a}^c}$, $\sigma_{1^r}$, and $\sigma_s * q^{-1}$ are $rs$, $r$, and $-r$, respectively, which matches the effect of the corresponding operations on the dimension of the Quot scheme. Forgetting one of the marked points corresponds to capping one of the boundary circles in the weighted TQFT.

In the particular case when $Q_{e,V}$ is finite and reduced for some $e$, its image class will count its points. The power of $q$ can be used to identify which $e$ yields the finite Quot scheme:
\[
	\int_{G^N} \eta_{g,d,N} = q^e \, \# Q_{e,V}.
\]
We can obtain formulas for $\# Q_{e,V}$ using the above identities as we did in the weighted TQFT.

\section{Background on Quot schemes}\label{s:background2}
Most of the material in this section is a straightforward generalization of results in \cite{Ber94} and \cite{Ber97} to Quot schemes of arbitrary vector bundles.

\subsection{Schubert varieties on Quot schemes}

We define Schubert varieties carefully and generally for arbitrary Quot schemes. Let $V$ be a vector bundle of rank $r+s$ on a curve $C$, let $Q_{e,V} = \Quot\big((r,-e),V\big)$, and let $0 \to \mathcal{E} \to \pi^*V \to \mathcal{F} \to 0$ denote the universal sequence on $C \times Q_{e,V}$. Choose a point $p \in C$, a full flag $V_{\bullet}$ of the fiber $V(p)$, and a partition $\vec{a}$. The $V_i \subset V(p)$ have cokernels $V(p) \surj V^i$.
Restricting the universal sequence to $\{ p \} \times Q_{e,V} \cong Q_{e,V}$ yields an exact sequence
\[
	0 \to \mathcal{E}|_{\{p\} \times Q_{e,V}} \to V(p) \otimes \OO \to \mathcal{F}|_{\{p\} \times Q_{e,V}} \to 0
\]
on the Quot scheme.
\begin{defn} The \emph{Schubert variety} $\overline{W}_{\vec{a}}(p,V_{\bullet}) \subset Q_{e,V}$ is the intersection for all $1 \le i \le r$ of the degeneracy loci where the compositions
\[
	\mathcal{E}|_{\{p\} \times Q_{e,V}} \to V(p) \otimes \OO \surj V^{s+i-a_i} \otimes \OO
\]
have kernel of dimension $\ge i$. 
\end{defn}

In the case when $\overline{W}_{\vec{a}}(p,V_\bullet)$ has pure codimension $|\vec{a}|$ for all $p$ and $V_\bullet$, the following lemma guarantees that the Schubert cycle $\bar{\sigma}_{\vec{a}} = [\overline{W}_{\vec{a}}(p,V_{\bullet})]$ in the cohomology of the Quot scheme is independent of $p$ and $V_\bullet$. 
\begin{lem} Suppose $\overline{W}_{\vec{a}}(p,V_\bullet)$ has pure codimension $|\vec{a}|$ for all $p \in C$ and flags $V_\bullet$. Then the cohomology class $\bar{\sigma}_{\vec{a}}$ of $\overline{W}_{\vec{a}}(p,V_\bullet)$ in $Q_{e,V}$ is independent of $p$ and $V_{\bullet}$.
\end{lem}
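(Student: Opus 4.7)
The plan is to exhibit all the Schubert varieties $\overline{W}_{\vec{a}}(p,V_\bullet)$ as scheme-theoretic fibers of a single family of subvarieties of $Q_{e,V}$ parametrized by an irreducible base, and then conclude that the fibers are cohomologous by a Künneth/connectedness argument.

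First I would construct the parameter space and the universal Schubert variety. Let $f \colon T = \mathrm{Fl}(V) \to C$ be the relative full flag bundle of $V$, whose closed points are pairs $(p,V_\bullet)$. Since $C$ is connected and the full flag variety is irreducible, $T$ is irreducible. On $T$ there is a tautological flag $0 = \mathcal{V}_0 \subsetneq \mathcal{V}_1 \subsetneq \cdots \subsetneq \mathcal{V}_{r+s} = f^*V$ restricting at $(p,V_\bullet)$ to the given flag. Let $\pi_T, \pi_Q$ denote the projections from $T \times Q_{e,V}$ and let $\epsilon = f \times \mathrm{id} \colon T \times Q_{e,V} \to C \times Q_{e,V}$. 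Pulling back the universal sequence by $\epsilon$ yields a short exact sequence $0 \to \tilde{\mathcal{E}} \to \pi_T^* f^* V \to \tilde{\mathcal{F}} \to 0$ on $T \times Q_{e,V}$. I define the universal Schubert variety $\mathcal{W}_{\vec{a}} \subset T \times Q_{e,V}$ as the intersection over $1 \le i \le r$ of the degeneracy loci where the composition
\[
\tilde{\mathcal{E}} \inj \pi_T^* f^* V \surj \pi_T^*\bigl(f^*V / \mathcal{V}_{s+i-a_i}\bigr)
\]
has kernel of rank at least $i$. By construction, the scheme-theoretic fiber of $\pi_T|_{\mathcal{W}_{\vec{a}}}$ over $(p,V_\bullet)$ is $\overline{W}_{\vec{a}}(p,V_\bullet)$.

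Next I would check the global codimension. As a determinantal subscheme, $\mathcal{W}_{\vec{a}}$ has codimension at most $|\vec{a}|$ in $T \times Q_{e,V}$; together with the hypothesis that every fiber has pure codimension $|\vec{a}|$ in $Q_{e,V}$ and the irreducibility of $T$, this forces $\mathcal{W}_{\vec{a}}$ itself to have pure codimension $|\vec{a}|$. Consequently, each fiber $\{t\} \times Q_{e,V}$ meets $\mathcal{W}_{\vec{a}}$ properly, and the pullback $i_t^*[\mathcal{W}_{\vec{a}}]$ along $i_t \colon \{t\} \times Q_{e,V} \inj T \times Q_{e,V}$ agrees with $[\overline{W}_{\vec{a}}(p,V_\bullet)]$ in $H^{2|\vec{a}|}(Q_{e,V},\CC)$.

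Finally I would conclude by connectedness. The Künneth decomposition $H^*(T \times Q_{e,V},\CC) \cong H^*(T,\CC) \otimes H^*(Q_{e,V},\CC)$ shows that $i_t^*$ kills every component $\alpha \otimes \beta$ with $\alpha$ of positive degree, so $i_t^*[\mathcal{W}_{\vec{a}}]$ equals the $H^0(T,\CC) \otimes H^{2|\vec{a}|}(Q_{e,V},\CC)$ component of $[\mathcal{W}_{\vec{a}}]$; this is independent of $t$ since $T$ is connected. Hence $\bar{\sigma}_{\vec{a}}$ does not depend on the choice of $p$ or $V_\bullet$. The main obstacle I foresee is the global codimension bookkeeping: once one has set up the determinantal universal construction, the heart of the argument is ensuring that the total space $\mathcal{W}_{\vec{a}}$ has pure codimension equal to the fiber codimension, which is precisely where the standing hypothesis of the lemma is used.
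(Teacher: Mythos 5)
Your proof is correct, but it takes a genuinely different route from the paper. The paper's proof is short and goes through the Kempf--Laksov determinantal formula: since $\overline{W}_{\vec{a}}(p,V_\bullet)$ has the expected codimension, its class is a universal polynomial in the Chern classes of $\mathcal{E}|_{\{p\}\times Q_{e,V}}$ and of the flag bundles $V^i\otimes\OO$; the latter are trivial, and the former are independent of $p$ because $\mathcal{E}$ is a vector bundle on $C\times Q_{e,V}$ and the inclusions $\{p\}\times Q_{e,V}\hookrightarrow C\times Q_{e,V}$ are all homotopic. Your argument instead globalizes the degeneracy-locus construction over the relative full flag bundle $T=\mathrm{Fl}(V)\to C$ and appeals to Künneth plus connectedness of $T$. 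What the paper's approach buys is an explicit closed formula (the determinantal class is manifestly independent of both $p$ and $V_\bullet$, and in fact this is how one computes these classes later in the paper); what your approach buys is that it avoids invoking Kempf--Laksov entirely and makes the role of the pure-codimension hypothesis transparent as a dimension bound on the universal locus.

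One point you might flag more carefully is the ``Consequently'' step: you deduce from proper intersection that $i_t^*[\mathcal{W}_{\vec{a}}]=[\overline{W}_{\vec{a}}(p,V_\bullet)]$. In the smooth Cohen--Macaulay world this is the standard specialization of cycles, but here $Q_{e,V}$ is not assumed smooth, so matching the Gysin multiplicities against the fundamental-cycle multiplicities of the scheme-theoretic fiber takes an extra word (a flatness or depth observation along the fiber). This is not a defect peculiar to your argument --- the paper's appeal to Kempf--Laksov to equate the determinantal class with $[\overline{W}_{\vec{a}}(p,V_\bullet)]$ requires essentially the same Cohen--Macaulay-type hypothesis --- but it is worth being aware that this is where the honest technical content lies, rather than in the codimension bookkeeping you single out as the main obstacle (which, given the hypothesis on fibers, is a one-line dimension count).
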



\begin{proof}  Since the $\overline{W}_{\vec{a}}(p,V_\bullet)$ have pure codimension $|\vec{a}|$, we can apply a theorem of Kempf-Laksov (\cite{KL74}) to the maps $\mathcal{E}|_{\{p\} \times Q_{e,V}} \to V(p) \otimes \OO \to V^i \otimes \OO$ to express the cohomology class of the degeneracy locus as a determinantal formula involving only the Chern classes of $\mathcal{E}|_{\{p\} \times Q_{e,V}}$ (and of the flag, but these are trivial). But since $\mathcal{E}$ is a vector bundle over $C \times Q_{e,V}$, the Chern classes of the restrictions of $\mathcal{E}$ over $p \in C$ are independent of $p$.
\end{proof}


We need Schubert varieties and their intersections to have the right codimension if we hope to get a useful intersection theory on the Quot scheme. Indeed, if intersections on $Q_{e,V}$ are well-behaved, then it makes sense to define analogs of the Gromov-Witten numbers as
\[
	\int_{Q_{e,V}} \bar{\sigma}_{\vec{a}_1} \cup \cdots \cup \bar{\sigma}_{\vec{a}_N}
\]
for $|\vec{a}_1| + \cdots + |\vec{a}_N| = \dim Q_{e,V}$.

\begin{defn} Suppose $Q_{e,V}$ is equidimensional. An intersection $W$ of Schubert varieties on $Q_{e,V}$ is \emph{proper} if it is empty or of pure codimension equal to the total size $A$ of the partitions. The intersection $W$ has \emph{failure} $\nu$ if its codimension is $A-\nu$ (we use similar terminology for the preimage of $W$ under a morphism to $Q_{e,V}$). We call $W$ a \emph{top intersection} if $A=\dim Q_{e,V}$.
\end{defn}

On the open (possibly empty) subscheme $U_{e,V} \subset Q_{e,V}$ parametrizing quotients that are torsion-free (which generalizes $\Mor_e(\PP^1,G) \subset Q_{e,\PP^1}$), there is no difficulty in proving that Schubert varieties intersect properly. For each $p \in C$, there is a morphism
\[
	\ev_p \colon U_{e,V} \to \Gr\big(r,V(p)\big), \quad [E \subset V] \mapsto [E(p) \subset V(p)]
\]
induced by $\mathcal{E}|_{\{p\} \times U_{e,V}} \inj V(p) \otimes \OO$ on $\{p\} \times U_{e,V} \simeq U_{e,V}$. The presence of morphisms to the Grassmannian makes it easy to control the behavior of intersections of Schubert varieties. Given distinct points $p_1,\dots,p_N$, the following lemma guarantees that the intersection $\ev_{p_1}^{-1}(W_{\vec{a}_1}) \cap \cdots \cap \ev_{p_N}^{-1}(W_{\vec{a}_N})$ is proper on each component of $U_{e,V}$.

\begin{lem}\label{l:morphism_proper} Let $X$ be an irreducible scheme with morphisms $f_i \colon X \to G$ for all $1 \le i \le N$. Then for all choices of partitions $\vec{a}_1,\dots,\vec{a}_N$ and Schubert varieties $W_{\vec{a}_1},\dots,W_{\vec{a}_N}$ defined using general flags, the preimage $f_1^{-1}(W_{\vec{a}_1}) \cap \cdots \cap f_N^{-1}(W_{\vec{a}_N})$ is empty or has pure codimension $A = |\vec{a}_1| + \cdots + |\vec{a}_N|$ in $X$.
\end{lem}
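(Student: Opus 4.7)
The plan is to reduce the statement to a single instance of Kleiman's transversality theorem applied to the product Grassmannian. First I would form the product morphism
\[
    f = (f_1,\dots,f_N) \colon X \to G^N,
\]
and observe that
\[
    f_1^{-1}(W_{\vec{a}_1}) \cap \cdots \cap f_N^{-1}(W_{\vec{a}_N}) = f^{-1}\bigl(W_{\vec{a}_1} \times \cdots \times W_{\vec{a}_N}\bigr).
\]
The subscheme $W_{\vec{a}_1} \times \cdots \times W_{\vec{a}_N} \subset G^N$ has pure codimension $A = |\vec{a}_1| + \cdots + |\vec{a}_N|$, so the goal is to show that its preimage under $f$ has pure codimension $A$ in the irreducible scheme $X$, provided the flags defining each $W_{\vec{a}_i}$ are chosen generically.

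Next I would put a group action into the picture. The group $H = GL(r+s)^N$ acts on $G^N$ via the product of the standard transitive actions of $GL(r+s)$ on $G$, and this product action is again transitive. Replacing the defining flag $V_\bullet^i$ of $W_{\vec{a}_i}$ by $g_i \cdot V_\bullet^i$ amounts to translating $W_{\vec{a}_i}$ by $g_i$, so varying the $N$ flags independently through their respective flag varieties is the same as translating $W_{\vec{a}_1} \times \cdots \times W_{\vec{a}_N}$ by an arbitrary element of $H$. By Kleiman's transversality theorem (in characteristic zero, the ambient field here being $\CC$), applied to the transitive $H$-action on $G^N$, the morphism $f$, and the subvariety $W_{\vec{a}_1} \times \cdots \times W_{\vec{a}_N}$, for a general $h = (g_1,\dots,g_N) \in H$ the preimage
\[
    f^{-1}\bigl(h \cdot (W_{\vec{a}_1} \times \cdots \times W_{\vec{a}_N})\bigr)
\]
is either empty or of pure codimension $A$ in $X$.

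Translating this statement back, a general choice of the $N$ flags defining the $W_{\vec{a}_i}$ makes $f_1^{-1}(W_{\vec{a}_1}) \cap \cdots \cap f_N^{-1}(W_{\vec{a}_N})$ empty or of pure codimension $A$, which is exactly the claim. The only potential obstacle is verifying that one is in a setting where Kleiman's theorem genuinely applies, but this is automatic: we work over $\CC$, the group $H$ is smooth and connected, and its action on the ambient space $G^N$ is transitive. No finer geometry of the Schubert varieties themselves (smoothness, irreducibility of $X$ beyond what is assumed, etc.) is needed for the codimension conclusion.
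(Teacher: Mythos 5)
Your proof is correct, but it takes a genuinely different route from the paper's. The paper proceeds by induction on $N$: the base case $N=1$ is Kleiman's transversality theorem applied to $G$ with the standard $\mathrm{GL}(r+s)$-action, and the inductive step restricts $f_N$ to each irreducible component of the already-cut-down locus $f_1^{-1}(W_{\vec{a}_1}) \cap \cdots \cap f_{N-1}^{-1}(W_{\vec{a}_{N-1}})$ and reapplies the base case. You instead bundle everything into a single application of Kleiman's theorem to the product $G^N$ with the transitive $\mathrm{GL}(r+s)^N$-action, replacing the product Schubert variety $W_{\vec{a}_1} \times \cdots \times W_{\vec{a}_N}$ by a general translate. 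Both arguments hinge on the same ingredient (Kleiman's theorem in characteristic zero and the observation that varying flags is the same as translating by group elements), so neither buys more generality, but yours is arguably cleaner in that it avoids the induction and the need to observe that a single general flag works simultaneously for all finitely many components at the $N$th step. The paper's version stays inside a single Grassmannian and is perhaps more readily adaptable when one wants to peel off Schubert conditions one at a time, which is exactly how the author uses it in the recursive stratification arguments later on.
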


\begin{proof} Induction on $N$. The base case $N=1$ is Kleiman's theorem (\cite{Kle74}). For the inductive step, assume $X' = f_1^{-1}(W_{\vec{a}_1}) \cap \cdots \cap f_{N-1}^{-1}(W_{\vec{a}_{N-1}})$ has pure codimension $A-|\vec{a}_N|$ in $X$. Restricting $f_N$ to each component $Y$ of $X'$, the base case guarantees that $f_N|_Y^{-1}(W_{\vec{a}_N})$ is empty or has pure codimension $|\vec{a}_N|$ in $Y$, which completes the proof.
\end{proof}

The challenge in proving that Schubert varieties intersect properly is to understand how they intersect the boundary $U_{e,V}^c$ of the Quot scheme $Q_{e,V}$. In general, the Schubert varieties will not intersect properly on the boundary, but it suffices to prove that the failure on the boundary is no greater than the codimension of the boundary. In the next section, we describe a strategy for proving properness that involves stratifying $U_{e,V}^c$ and controlling the failure on each stratum. This motivates a technical study of the recursive structure of $U_{e,V}^c$.

\subsection{Boundary of the Quot scheme}\label{ss:boundary}

To motivate the constructions in this section, we begin with an outline of a proof of properness of Schubert intersections. We fix the vector bundle $V$ on the curve $C$ and drop it from the notation: let $Q_e := \Quot\big((r,-e),V\big)$, let $U_e$ denote the (possibly empty) subscheme in $Q_e$ where the quotient is torsion-free, and let $\mathcal{E}_{e}$ denote the universal subsheaf, which is a vector bundle on $C \times Q_e$.

Let $W$ denote an intersection of general Schubert varieties. The idea for proving $W$ is proper in $Q_e$ is to control the codimension of $W$ on a stratification of $Q_e$. One has to prove that the failure of $W$ to be proper on each stratum is no greater than the codimension of that stratum in $Q_e$.

\begin{strategy}[proving general Schubert intersections $W$ are proper on $Q_e$]\label{strategy:proper}{\ }
\begin{enumerate}[(1)]
\item On $U_e$, we use Lemma \ref{l:morphism_proper}.
\item On the boundary $U_e^c$, which is the image of a Grassmann bundle $\pi \colon \Gr(\mathcal{E}_{e-1},1) \to C \times Q_{e-1}$ under a map $\beta_1 \colon \Gr(\mathcal{E}_{e-1},1) \to Q_e$ (Proposition \ref{p:structure}), it suffices to control the codimension of $\beta_1^{-1}(W)$.
\begin{enumerate}
\item $\beta_1^{-1}(W)$ splits into two types of components (Corollary \ref{c:types}).
\item The Type 1 component comes from the base $Q_{e-1}$ and is handled by induction on $e$.
\item Each Type 2 component, which is contained in $G_{1,p} := \pi^{-1}(\{p\} \times Q_{e-1})$ for some $p$, consists of an intersection of Schubert varieties in $Q_{e-1}$ (handled by induction on $e$) with a degeneracy locus $\hat{W}_{\vec{b}}(p)$.
\item By stratifying $G_{1,p}$ as $U_{1,p} \sqcup \beta^{-1}(\beta_{\ell,p}(U_{\ell,p}))$ (Corollary \ref{c:stratification}) and identifying the fibers of $\beta$ over each $\beta_{\ell,p}(U_{\ell,p})$, we reduce to understanding $\hat{W}_{\vec{b}}(p)$ on $U_{\ell,p}$.
\item On $U_{\ell,p}$, $\hat{W}_{\vec{b}}$ is the preimage of $W_{\vec{b}}$ under a map $U_{\ell,p} \to \Gr(r-\ell,\CC^{r+s})$ (Remark \ref{r:degeneracy_locus}), so we use Lemma \ref{l:morphism_proper}.
\end{enumerate}
\end{enumerate}
\end{strategy}

\begin{rmk} For this stratification to give the right codimension estimates, we need to know that the Quot schemes $Q_{e-\ell}$ have the expected dimension for all $0 \le \ell \le r$. When the vector bundle is trivial, this is true on $\PP^1$ for all $e$, and the strategy can be used to prove properness (Bertram's proof of Theorem \ref{t:proper_P1}). When the vector bundle is trivial but the curve has positive genus, this is only true for $e \gg 0$, but the strategy can still be used if some care is taken with the induction on $e$ (Lemma \ref{l:proper_trivial}). For general vector bundles, we use a different approach.
\end{rmk}

We now proceed to fill in the details of the constructions. Given $p \in C$ and an integer $1 \le \ell \le r$, the fiber of the Grassmann bundle $\Gr(\mathcal{E}_e,\ell)$ over a point $(p,x=[0 \to E \to V \to F \to 0]) \in C \times Q_e$ parametrizes quotients $\mathcal{E}_e|_{(p,x)} = E(p) \twoheadrightarrow \CC^\ell$. By composing $E \surj E(p) \surj \CC^\ell$, these quotients induce elementary modifications $0 \to E' \to E \to \CC_p^\ell \to 0$ in which $\deg E' = \deg E - \ell$. Since $E' \hookrightarrow E \hookrightarrow V$, there is a short exact sequence $0 \to E' \to V \to F' \to 0$, and the assignment  $[E(p) \twoheadrightarrow \CC^\ell] \mapsto [0 \to E' \to V \to F' \to 0]$ defines a set-theoretic map $\beta_\ell$:
\[\xymatrix{
	\Gr(\mathcal{E}_e,\ell) \ar[d]_{\pi_\ell} \ar[r]^-{\beta_\ell} & Q_{e+\ell} \\
	C \times Q_e
}\]
Let $0 \to \mathcal{S} \to \pi_\ell^* \mathcal{E}_e \to \mathcal{Q} \to 0$ denote the tautological sequence of vector bundles on $\Gr(\mathcal{E}_e,\ell)$, which is equipped with a map $\pi_\ell^* \mathcal{E}_e \to \pi_C^* V$ from the tautological sequence on $Q_e$.


\begin{prop}[\cite{Ber97}]\label{p:structure} In the setting above,
\begin{enumerate}[(a)] 
\item $\beta_\ell$ is a morphism of schemes.
\item $\mathrm{im}(\beta_\ell)$ is closed in $Q_{e+\ell}$ and contains exactly those points $x \in Q_{e+\ell}$ such that the universal quotient $\mathcal{F}_{e+\ell}$ has rank $\ge s+\ell$ at $(p,x) \in C \times Q_{e+\ell}$ for some $p \in C$.
\item The restriction of $\beta_\ell$ to $\pi_\ell^{-1}(C \times U_e)$ is an embedding.
\item Let $p \in C$ and let $\overline{W}_{\vec{a}}(p)$ denote a Schubert variety in $Q_{e+\ell}$ defined using a full flag $V_{\bullet}$ in $V(p)$. Then
\[
	\beta_\ell^{-1}\big(\overline{W}_{\vec{a}}(p)\big) = \pi_\ell^{-1}\big(C \times \overline{W}_{\vec{a}}(p)\big) \cup \hat{W}_{a_{\ell+1},\dots,a_r}(p),
\]
where $\hat{W}_{b_1,\dots,b_{r-\ell}}(p)$ is the degeneracy locus inside $\pi_\ell^{-1}(\{p\} \times Q_e)$ where the kernel of $\mathcal{S} \to V^{s+\ell+j-b_j} \otimes \OO$ has rank $\ge j$ for all $1 \le j \le r-\ell$.
\end{enumerate}
\end{prop}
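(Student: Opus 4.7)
My plan is to proceed through (a)--(d) in order, with the main technical work in (a) and (d).

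For (a), I would realize $\beta_\ell$ via the universal property of $Q_{e+\ell}$. Let $\rho \colon C \times \Gr(\mathcal{E}_e,\ell) \to C \times Q_e$ denote $\mathrm{id}_C$ times the projection to $Q_e$, and let $\Gamma \colon \Gr(\mathcal{E}_e,\ell) \hookrightarrow C \times \Gr(\mathcal{E}_e,\ell)$ be the graph of the projection to $C$; then $\Gamma^* \rho^* \mathcal{E}_e = \pi_\ell^* \mathcal{E}_e$ carries the tautological quotient $\mathcal{Q}$. Adjunction yields a surjection $\rho^* \mathcal{E}_e \twoheadrightarrow \Gamma_* \mathcal{Q}$; let $\tilde{\mathcal{E}}$ be its kernel. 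A two-of-three argument shows that $\tilde{\mathcal{E}}$ is flat over $\Gr(\mathcal{E}_e,\ell)$, and a fiber-wise check confirms that the quotient of the pullback of $V$ by $\tilde{\mathcal{E}}$ has the Hilbert polynomial of $Q_{e+\ell}$, producing the classifying morphism $\beta_\ell$.

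For (b) and (c), properness of $\Gr(\mathcal{E}_e,\ell) \to Q_e$ combined with projectivity of $Q_e$ makes $\mathrm{im}(\beta_\ell)$ closed in $Q_{e+\ell}$. The set-theoretic description is reversible: any $[E' \subset V]$ with $\dim(V/E')|_p \ge s + \ell$ has torsion of length $\ge \ell$ in $V/E'$ at $p$, and any rank-$\ell$ subspace of this torsion determines intermediate $E' \subset E \subset V$ with $E/E' \cong \CC_p^\ell$ together with a quotient $E(p) \twoheadrightarrow \CC^\ell$. For (c), on $\beta_\ell(\pi_\ell^{-1}(C \times U_e))$ the torsion of $V/E'$ is concentrated at a single point of length exactly $\ell$ (because $V/E$ is torsion-free), so $p$, $E$, and $\phi$ are uniquely recoverable; promoting this to a scheme-theoretic inverse uses the flattening stratification of the universal torsion of $V/\mathcal{E}_{e+\ell}$ on $\mathrm{im}(\beta_\ell)$.

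The substantive work is (d). I would decompose $\beta_\ell^{-1}(\overline{W}_{\vec{a}}(p))$ according to whether the modification point coincides with $p$. Over $\pi_\ell^{-1}((C \setminus \{p\}) \times Q_e)$, the modification happens away from $p$, so $\tilde{\mathcal{E}}$ restricted to $\{p\}$ agrees with $\pi_\ell^* \mathcal{E}_e|_{\{p\}}$ and the Schubert degeneracy condition pulls back directly, contributing $\pi_\ell^{-1}(C \times \overline{W}_{\vec{a}}(p))$ after taking closure. Over $\pi_\ell^{-1}(\{p\} \times Q_e)$, a Koszul / $\mathrm{Tor}$ computation applied to $0 \to \tilde{\mathcal{E}} \to \rho^* \mathcal{E}_e \to \Gamma_* \mathcal{Q} \to 0$ shows that the image of $\tilde{\mathcal{E}}|_{\{p\}}$ inside $\pi_\ell^* \mathcal{E}_e|_{\{p\}}$ is precisely the tautological subbundle $\mathcal{S}$, with a rank-$\ell$ kernel. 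Consequently the condition $\dim \ker\bigl( \tilde{\mathcal{E}}|_{\{p\}} \to V^{s+i-a_i} \otimes \OO \bigr) \ge i$ is automatic for $i \le \ell$, while for $i > \ell$ (setting $j = i - \ell$) it translates to $\dim \ker\bigl( \mathcal{S} \to V^{s+\ell+j-a_{j+\ell}} \otimes \OO \bigr) \ge j$, which is exactly the defining condition of $\hat{W}_{a_{\ell+1},\ldots,a_r}(p)$. The main obstacle is verifying the scheme-theoretic splitting into these two loci, which I would handle by comparing the Kempf-Laksov determinantal ideals defining each piece.
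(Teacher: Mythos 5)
The paper does not actually prove Proposition~\ref{p:structure}: it states it as a citation of \cite{Ber97} (where it appears for $C=\PP^1$ and $V$ trivial), and notes that ``the proof given there generalizes without requiring modification,'' deferring details to the author's thesis \cite{Gol17}. So there is no in-paper proof against which to compare. That said, your sketch is a plausible reconstruction of the standard argument and is essentially correct.

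A few checks and observations. For (a), your construction via adjunction and the short exact sequence $0 \to \tilde{\mathcal{E}} \to \rho^*\mathcal{E}_e \to \Gamma_*\mathcal{Q} \to 0$ is the right device; the two-of-three flatness argument goes through because $\Gamma_*\mathcal{Q}$ is flat over $\Gr(\mathcal{E}_e,\ell)$ (pushforward of a vector bundle along a section of the projection to $\Gr(\mathcal{E}_e,\ell)$), so $\mathrm{Tor}_2(\Gamma_*\mathcal{Q},-)$ vanishes. For (d), your Tor computation is the key point and is correct: the four-term exact sequence
\[
0 \to \mathcal{Q} \to \tilde{\mathcal{E}}\big|_{\{p\}\times G} \to \pi_\ell^*\mathcal{E}_e\big|_{\{p\}\times G} \to \mathcal{Q} \to 0
\]
over $G := \pi_\ell^{-1}(\{p\}\times Q_e)$ shows that the image of $\tilde{\mathcal{E}}$ is exactly $\mathcal{S}$, whence the $\ell$-dimensional automatic kernel and the index shift $j=i-\ell$. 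Two small points worth spelling out: first, you should verify that $\pi_\ell^{-1}(\{p\}\times \overline{W}_{\vec{a}}(p)) \subseteq \hat{W}_{a_{\ell+1},\dots,a_r}(p)$ (which follows from $\dim(K\cap\mathcal{S}) \ge \dim K - \ell$), so that the union in (d) is the same whether you take $\pi_\ell^{-1}(C\times\overline{W}_{\vec{a}}(p))$ or its restriction to $C\setminus\{p\}$ and then close up; second, for the paper's applications (Strategy~\ref{strategy:proper}, Corollary~\ref{c:types}, Lemma~\ref{l:proper_trivial}) only the set-theoretic decomposition and its induced codimension bounds are used, so your caution about the scheme-theoretic splitting, while prudent, is not strictly necessary here.
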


\begin{rmk}\label{r:degeneracy_locus} One can think of $\hat{W}_{\vec{b}}(p) \subset \pi_\ell^{-1}(\{p\} \times Q_e)$, where $\vec{b}$ has length $r-\ell$, as follows. On the open locus $\pi_\ell^{-1}(\{p\} \times U_e)$, the elementary modifications yield $0 \to E' \to V \to F' \to 0$, where $E'(p) \to V(p)$ has rank exactly $r-\ell$. Thus we get a map $\pi_\ell^{-1}(\{p\} \times U_e) \to \Gr\big(r-\ell,V(p)\big)$, and pulling back the Schubert variety $W_{\vec{b}}(V_{\bullet})$ and taking its closure yields $\hat{W}_{\vec{b}}(p)$.
\end{rmk}

The fact that the varieties $\hat{W}_{\vec{b}}(p)$ based at different points $p$ are disjoint proves the following corollary.

\begin{cor}\label{c:types} In the setting above, let $\overline{W}_{\vec{a}_1}(p_1),\dots,\overline{W}_{\vec{a}_N}(p_N)$ be Schubert varieties in $Q_{e+\ell}$ defined at distinct points $p_i$, and let $W$ denote their intersection. Then, up to reindexing the $\vec{a}_i$, $\beta_\ell^{-1}(W)$ is a union of intersections of the following types:
\begin{enumerate}[Type 1:]
\item $\pi_\ell^{-1}\left(C \times \big(\overline{W}_{\vec{a}_1}(p_1) \cap \cdots \cap \overline{W}_{\vec{a}_N}(p_N)\big) \right)$;
\item $\pi_\ell^{-1}\left(p_N \times \big(\overline{W}_{\vec{a}_1}(p_1) \cap \cdots \cap \overline{W}_{\vec{a}_{N-1}}(p_{N-1})\big)\right) \cap \hat{W}_{(\vec{a}_N)_{\ell+1},\dots(\vec{a}_N)_r}$.
\end{enumerate}
\end{cor}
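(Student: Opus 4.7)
My plan is to apply Proposition \ref{p:structure}(d) to each factor $\overline{W}_{\vec{a}_i}(p_i)$ and then distribute the intersection over the resulting unions, reducing the problem to a finite case analysis indexed by subsets of $\{1,\ldots,N\}$.

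First I would unpack each $\beta_\ell^{-1}(\overline{W}_{\vec{a}_i}(p_i))$ via part (d) of Proposition \ref{p:structure} as the union $\pi_\ell^{-1}(C \times \overline{W}_{\vec{a}_i}(p_i)) \cup \hat{W}_{\vec{b}_i}(p_i)$, where $\vec{b}_i := ((\vec{a}_i)_{\ell+1}, \ldots, (\vec{a}_i)_r)$. Distributing the intersection $\beta_\ell^{-1}(W) = \bigcap_i \beta_\ell^{-1}(\overline{W}_{\vec{a}_i}(p_i))$ over the unions expresses $\beta_\ell^{-1}(W)$ as a union, indexed by subsets $S \subseteq \{1,\ldots,N\}$, of terms of the form
\[
\bigcap_{i \notin S} \pi_\ell^{-1}(C \times \overline{W}_{\vec{a}_i}(p_i)) \;\cap\; \bigcap_{i \in S} \hat{W}_{\vec{b}_i}(p_i),
\]
where $S$ records which factors contribute the degeneracy-locus summand.

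The key observation is that each $\hat{W}_{\vec{b}_i}(p_i)$ is contained in the single fiber $\pi_\ell^{-1}(\{p_i\} \times Q_e)$, by its definition in Proposition \ref{p:structure}(d). Since the points $p_1,\ldots,p_N$ are distinct, any two such fibers are disjoint in $\Gr(\mathcal{E}_e,\ell)$, so every term with $|S| \geq 2$ is empty and can be discarded. Only two cases then survive: $S = \emptyset$, which yields exactly the Type 1 intersection $\pi_\ell^{-1}\bigl(C \times (\overline{W}_{\vec{a}_1}(p_1) \cap \cdots \cap \overline{W}_{\vec{a}_N}(p_N))\bigr)$; and $S = \{k\}$ for a single index $k$, where the elementary identity $\pi_\ell^{-1}(C \times X) \cap \pi_\ell^{-1}(\{p_k\} \times Q_e) = \pi_\ell^{-1}(\{p_k\} \times X)$ absorbs the remaining factors into $\pi_\ell^{-1}(\{p_k\} \times \bigcap_{i \neq k} \overline{W}_{\vec{a}_i}(p_i))$, producing a Type 2 intersection after relabeling the index $k$ as $N$.

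Since this is essentially a formal distribution followed by a disjointness observation, I do not expect a serious obstacle; the main point to verify carefully is that the $|S|=1$ shape collapses to precisely the form stated in the corollary, and that the reindexing convention (placing the degeneracy-locus index last) does not alter the combinatorial content.
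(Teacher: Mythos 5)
Your proof is correct and follows exactly the argument the paper has in mind: the paper states the result with the one-line remark that the $\hat{W}_{\vec{b}}(p)$ at distinct points $p$ are disjoint, and your write-up simply unwinds Proposition \ref{p:structure}(d), distributes the intersection over the unions, and uses that disjointness to kill every term with two or more degeneracy-locus factors. No gaps.
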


The proposition is stated for $C = \PP^1$ and $V$ the trivial vector bundle in \cite{Ber97}. The proof given there generalizes without requiring modification. For details, see \cite{Gol17}.

In order to handle the degeneracy loci $\hat{W}_{\vec{b}}(p)$, we need to stratify the fiber of the Grassmann bundle over $p$. Let $G_{\ell,p}$ denote the locus $\pi_\ell^{-1}(\{p\} \times Q_{e-\ell}) \subset \Gr(\mathcal{E}_{e-\ell},\ell)$ parametrizing elementary modifications of the universal subsheaf at the fixed point $p$. As before, there are maps
\[\xymatrix{
	G_{\ell,p} \ar[d]_{\pi_{\ell,p}} \ar[r]^-{\beta_{\ell,p}} & Q_e \\
	Q_{e-\ell}
}\]
and there is a universal sequence $0 \to \mathcal{S}_{\ell,p} \to \pi_{\ell,p}^* \mathcal{E}_{e-\ell}|_{\{p\} \times Q_{e-\ell}} \to \mathcal{Q}_{\ell,p} \to 0$ on $G_{\ell,p}$. Let $U_{\ell,p} \subset G_{\ell,p}$ denote the open subscheme where the fibers of the map $\mathcal{S}_{\ell,p} \to V(p) \otimes \OO$ are injective. Then $\beta_{\ell,p}$ is injective on $U_{\ell,p}$ and the image $Z_{\ell,p} := \beta_{\ell,p}(U_{\ell,p})$ is the locally-closed locus in $Q_e$ where the inclusion $E \subset V$ has rank exactly $r-\ell$ at $p$. Let $U_{\ell,p}^c$ denote the complement of $U_{\ell,p}$ in $G_{\ell,p}$.

\begin{cor}\label{c:stratification}
There is a set-theoretic stratification
\[
	U_{\ell,p}^c = \bigsqcup_{\ell < \ell' \le r} \beta_{\ell,p}^{-1}(Z_{\ell',p}) \subset G_{\ell,p}.
\]
Moreover, in the case when the Quot schemes $Q_{e-\ell}$ are of the expected dimension for all $0 \le \ell \le r$, then
\begin{equation}\label{eq:quot_strat_dim}
	\dim Q_e - \dim Z_{\ell,p} = \ell(s+\ell),
\end{equation}
\begin{equation}\label{eq:grassmann_strat_dim}
	\dim G_{\ell,p} - \dim \beta_{\ell,p}^{-1}(Z_{\ell',p})
	= (\ell' - \ell)(s + \ell').
\end{equation}
\end{cor}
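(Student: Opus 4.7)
The plan is to establish the set-theoretic stratification first via a local Tor computation at $p$ that identifies the corank of $\mathcal{S}_{\ell,p} \to V(p)\otimes\OO$ at a point $x$ with the offset $\ell' - \ell$, and then to deduce the two dimension formulas from the Grassmann-bundle structure of $G_{\ell,p}$ together with a fiberwise description of $\beta_{\ell,p}$ over $Z_{\ell',p}$.

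For the stratification, let $x = (E,\sigma) \in G_{\ell,p}$ with $[E \subset V] \in Q_{e-\ell}$ and $\sigma \colon E(p) \twoheadrightarrow \CC^\ell$, and set $E' := \ker(E \to \CC_p^\ell) = \beta_{\ell,p}(x)$. Tensoring $0 \to E' \to E \to \CC_p^\ell \to 0$ by $\CC_p$, using local freeness of $E$ near $p$ and $\mathrm{Tor}_1(\CC_p^\ell,\CC_p) \simeq \CC^\ell$, yields the exact sequence $0 \to \CC^\ell \to E'(p) \to E(p) \to \CC^\ell \to 0$. Hence $E'(p) \to E(p)$ has image $S := \ker\sigma = \mathcal{S}_{\ell,p}|_x$ of dimension $r - \ell$, and $E'(p) \to V(p)$ factors as $E'(p) \twoheadrightarrow S \to V(p)$. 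With $K := \ker(E(p) \to V(p))$, the map $S \to V(p)$ has rank $(r-\ell) - \dim(S \cap K)$, so the rank drop $\ell'$ of $E'(p) \to V(p)$ satisfies $\ell' - \ell = \dim(S \cap K)$, which is exactly the corank of the fiber $\mathcal{S}_{\ell,p}|_x \to V(p)$. Therefore $x \in U_{\ell,p}$ iff $\ell' = \ell$, and otherwise $\beta_{\ell,p}(x) \in Z_{\ell',p}$ with $\ell < \ell' \le r$ (upper bound from $\dim(S \cap K) \le \dim S = r-\ell$), giving the claimed partition.

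For (\ref{eq:quot_strat_dim}), $G_{\ell,p} \to Q_{e-\ell}$ is a Grassmann bundle with fiber of dimension $\ell(r-\ell)$; under the expected-dimension hypothesis, $\dim G_{\ell,p} = \mathrm{expdim}(Q_{e-\ell}) + \ell(r-\ell) = \dim Q_e - \ell(s+\ell)$ by a direct arithmetic. Injectivity of $\beta_{\ell,p}$ on the open subscheme $U_{\ell,p}$ yields $\dim Z_{\ell,p} = \dim U_{\ell,p} = \dim G_{\ell,p}$, whence the formula. For (\ref{eq:grassmann_strat_dim}), I identify the fiber of $\beta_{\ell,p}$ over $E' \in Z_{\ell',p}$: it parametrizes $E \supset E'$ with $E/E' \simeq \CC_p^\ell$, equivalently length-$\ell$ subsheaves of $V/E'$ supported at $p$ and annihilated by $\mathfrak{m}_p$, i.e., $\ell$-dimensional subspaces of $\mathrm{soc}(T)$ where $T$ is the torsion of $V/E'$ at $p$. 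The key observation is $\dim_\CC \mathrm{soc}(T) = \ell'$: locally splitting $(V/E')_p \simeq T_p \oplus \OO_{C,p}^s$ gives $\dim T_p/\mathfrak{m}_p T_p = \dim(V/E')(p) - s = (s+\ell')-s = \ell'$, and over the DVR $\OO_{C,p}$ a finite-length module has socle and cosocle dimensions both equal to the number of cyclic summands. The fiber is therefore $\Gr(\ell,\ell')$, of dimension $\ell(\ell'-\ell)$. Combining with (\ref{eq:quot_strat_dim}), $\dim \beta_{\ell,p}^{-1}(Z_{\ell',p}) = \dim Q_e - \ell'(s+\ell') + \ell(\ell'-\ell)$, and subtracting from $\dim G_{\ell,p} = \dim Q_e - \ell(s+\ell)$ simplifies to $(\ell'-\ell)(s+\ell')$.

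The main obstacle is the Tor/factorization analysis in the stratification step: correctly matching the corank of $\mathcal{S}_{\ell,p} \to V(p) \otimes \OO$ with the offset $\ell' - \ell$ requires the explicit fiberwise computation above rather than an abstract dimension count. A secondary technical point is the socle-equals-cosocle fact for finite-length DVR modules, which ensures the fiber identification in (\ref{eq:grassmann_strat_dim}) remains valid even when the torsion of $V/E'$ at $p$ fails to be semisimple.
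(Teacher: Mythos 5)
Your proposal is correct and takes essentially the same approach as the paper: deduce~(\ref{eq:quot_strat_dim}) from the Grassmann-bundle structure plus injectivity of $\beta_{\ell,p}$ on $U_{\ell,p}$, and prove~(\ref{eq:grassmann_strat_dim}) by identifying the fiber of $\beta_{\ell,p}$ over a point of $Z_{\ell',p}$ with $\Gr(\ell,\CC^{\ell'})$ via the socle of the torsion of the quotient. Your Tor computation for the stratification simply makes explicit what the paper's one-line assertion (``Since the $Z_{\ell,p}$ stratify $Q_e$, their preimages stratify $U_{\ell,p}^c$'') leaves implicit, namely that $\beta_{\ell,p}^{-1}(Z_{\ell',p})$ is empty for $\ell'<\ell$ and equals $U_{\ell,p}$ for $\ell'=\ell$.
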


\begin{proof}
Since the $Z_{\ell,p}$ stratify $Q_e$, their preimages stratify $U_{\ell,p}^c$. (1) follows from the dimension formula for Quot schemes and Grassmannians. For (2), we need to identify the fiber. At a point $x=[0 \to E \to V \to F \to 0]$ in $Z_{\ell',p}$, the torsion subsheaf $T$ of $F$ has fiber of dimension exactly $\ell'$ at $p$, hence $T$ is a direct sum of $\ell'$ quotients of $\OO_C$ and has a canonical subsheaf $\CC_p^{\ell'} \subset T \subset F$ such that every map $\CC_p \to F$ factors through $\CC_p^{\ell'}$. The fiber of $\beta_{\ell,p}$ over $x$ consists of all $[0 \to E' \to V \to F' \to 0]$ such that there is an elementary modification $E' \surj \CC_p^\ell$ yielding $E$ as the kernel. By the snake lemma, such modifications correspond to maps $\CC_p^\ell \inj F$ (whose cokernel is $F'$). Thus the fiber over $x$ is isomorphic to $\Gr(\ell,\CC^{\ell'})$, which has dimension $\ell(\ell'-\ell)$. Combining this with the dimension formula for Quot schemes gives the result.
\end{proof}

\section{Quot schemes of trivial bundles}\label{s:quot_trivial}
Quot schemes of trivial bundles are known to be irreducible of the expected dimension when the degree of the subsheaf is sufficiently negative. We prove that in this case, intersections of general Schubert varieties are proper. Since we consider mainly Quot schemes of trivial bundles in this section, we use the notation $Q_{e,C} = \Quot\big( (r,-e), \OO_C^{r+s} \big)$. When the curve is clear from context, we may even write just $Q_e$. As usual, for general $V$ we write $Q_{e,V}=\Quot\big( (r,-e),V \big)$ and let $U_{e,V}$ denote the open subscheme of torsion-free quotients.

\subsection{Quot schemes of trivial bundles on $\PP^1$}\label{ss:quot_triv_P1} It was known to Grothendieck that for all $e$, the Quot scheme $Q_{e,\PP^1}$ is irreducible, smooth, of the expected dimension $(r+s)e + rs$ (or empty if this number is negative), and contains $\Mor_e(\PP^1,G)$ as a dense open subscheme. There is a theorem of Bertram ensuring that the intersection theory on the Quot scheme is as nice as possible.
\begin{thm}[\cite{Ber97}]\label{t:proper_P1} For all $e \in \ZZ$,
\begin{enumerate}[(a)]
\item Intersections $W$ of general Schubert varieties in $Q_{e,\PP^1}$ are proper;
\item $W \cap \Mor_e(\PP^1,G)$ is dense in $W$;
\item top intersections $W$ are finite, reduced, and contained in $\Mor_e(\PP^1,G)$.
\end{enumerate}
\end{thm}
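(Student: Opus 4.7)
The plan is to implement Strategy \ref{strategy:proper} verbatim. This applies to $C = \PP^1$ with $V = \OO_{\PP^1}^{r+s}$ because Grothendieck's theorem provides the strategy's required hypothesis: $Q_{e',\PP^1}$ is irreducible, smooth, and of the expected dimension for every $e' \in \ZZ$, with $U_{e'} := U_{e',\PP^1} = \Mor_{e'}(\PP^1,G)$ open and dense. I would prove (a) by induction on $e$, the base case being vacuous once $(r+s)e + rs < 0$ since then $Q_{e,\PP^1}$ is empty.

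For the inductive step, let $W = \overline{W}_{\vec{a}_1}(p_1)\cap\cdots\cap\overline{W}_{\vec{a}_N}(p_N)$ be an intersection of Schubert varieties defined by general flags at distinct points, with total partition size $A = \sum_i |\vec{a}_i|$. On $U_e$, the evaluations $\ev_{p_i}\colon U_e \to G$ satisfy $\overline{W}_{\vec{a}_i}(p_i)\cap U_e = \ev_{p_i}^{-1}(W_{\vec{a}_i})$, and since $U_e$ is smooth and irreducible, Lemma \ref{l:morphism_proper} shows $W\cap U_e$ is empty or has pure codimension $A$ in $U_e$. For the boundary, Proposition \ref{p:structure} presents $U_e^c$ as the image under the map $\beta_1 \colon \Gr(\mathcal{E}_{e-1},1) \to Q_e$, and Corollary \ref{c:types} decomposes $\beta_1^{-1}(W)$ into a Type 1 component $\pi_1^{-1}(\PP^1 \times W^{(e-1)})$, where $W^{(e-1)}$ is the intersection of the same Schubert varieties reinterpreted on $Q_{e-1,\PP^1}$, together with one Type 2 component for each reindexing. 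The Type 1 codimension is immediate from the inductive hypothesis applied to $Q_{e-1,\PP^1}$.

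Each Type 2 component lies in a single fiber $G_{1,p_N}$ and takes the form $\pi_{1,p_N}^{-1}(W') \cap \hat{W}_{\vec{b}}(p_N)$, where $W'$ is an intersection of the remaining $N-1$ Schubert varieties on $Q_{e-1,\PP^1}$ and $\vec{b} = ((\vec{a}_N)_2,\dots,(\vec{a}_N)_r)$. I would stratify $G_{1,p_N}$ by Corollary \ref{c:stratification} as $U_{1,p_N} \sqcup \bigsqcup_{\ell \ge 2}\beta_{1,p_N}^{-1}(Z_{\ell,p_N})$. By Remark \ref{r:degeneracy_locus}, on $U_{\ell,p_N}$ the locus $\hat{W}_{\vec{b}}(p_N)$ is the preimage of $W_{\vec{b}}$ under a morphism to $\Gr(r-1,\CC^{r+s})$, so Lemma \ref{l:morphism_proper} combined with the inductive hypothesis on $Q_{e-\ell,\PP^1}$ controls codimensions on every stratum. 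The dimension formulas (\ref{eq:quot_strat_dim}) and (\ref{eq:grassmann_strat_dim}) then imply that the failure of $\beta_1^{-1}(W)$ on every stratum is strictly less than $\codim_{Q_e} U_e^c$, which gives (a) and simultaneously shows that $W \cap U_e^c$ has strictly smaller dimension than $W \cap U_e$, proving (b).

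For (c), a top intersection has $A = \dim Q_{e,\PP^1}$, so the strict inequalities above force $\beta_1^{-1}(W)$ to be empty on every boundary stratum, hence $W \subset U_e$. Since $U_e$ is smooth, Kleiman-Bertini transversality applied to the $\ev_{p_i}$ with general flags ensures that the preimages $\ev_{p_i}^{-1}(W_{\vec{a}_i})$ meet transversally at smooth points of the Schubert varieties, so $W$ is finite and reduced. The main obstacle will be the combinatorial bookkeeping on the Type 2 strata: one must confirm that the codimension contribution $(\ell-1)(s+\ell)$ of the stratum $\beta_{1,p}^{-1}(Z_{\ell,p})$, combined with the codimension of the remaining Schubert intersection obtained from the inductive hypothesis on $Q_{e-\ell,\PP^1}$, always beats $A$ by at least the codimension of $U_e^c$ in $Q_e$. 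The key simplification over higher-genus curves is that $Q_{e-\ell,\PP^1}$ has the expected dimension for every $\ell \ge 0$, so the induction on $e$ needs no special handling of small degrees.
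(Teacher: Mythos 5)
Your proposal implements Strategy \ref{strategy:proper} for the $\PP^1$ case, which is precisely what the paper says Bertram's proof does (``Bertram's proof is similar to the outline given in Strategy \ref{strategy:proper}''), and the codimension arithmetic works out: on the stratum $\beta_{1,p_N}^{-1}(Z_{\ell,p_N})$, the Type 2 component has dimension at most $\dim Q_e - A - 1 - \ell(\ell-1)$, hence failure at most $s-1-\ell(\ell-1) < s$, while the Type 1 component has failure $0$. One small imprecision to flag: on the stratum $\beta_{1,p_N}^{-1}(Z_{\ell,p_N})$ for $\ell \ge 2$, the degeneracy locus $\hat{W}_{\vec{b}}(p_N)$ does not pull back from $W_{\vec{b}}$ on $\Gr(r-1,\CC^{r+s})$; rather, after descending to $U_{\ell,p_N}$ where the evaluation map lands in $\Gr(r-\ell,\CC^{r+s})$, the relevant conditions are those of $W_{(\vec{a}_N)_{\ell+1},\dots,(\vec{a}_N)_r}$, since the first $\ell$ rank conditions become automatic once the subsheaf drops rank by $\ell$ at $p_N$. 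This is exactly the weaker codimension $|\vec{a}_N| - \sum_{i \le \ell}(\vec{a}_N)_i \ge |\vec{a}_N| - \ell s$ that the calculation needs, and it matches the paper's own phrasing inside the proof of Lemma \ref{l:proper_trivial}; you should state this truncation explicitly rather than citing Remark \ref{r:degeneracy_locus} with $\vec{b}$ unchanged. With that correction the argument is complete and is essentially the same as the paper's intended proof; your observation that the induction needs no special care because every $Q_{e-\ell,\PP^1}$ has the expected dimension is indeed the simplification the paper points out in the remark following Strategy \ref{strategy:proper}.
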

Bertram's proof is similar to the outline given in Strategy \ref{strategy:proper}. We make a more general argument in the next subsection.

\subsection{Quot schemes of trivial bundles on curves of any genus}

As the following example illustrates, the properness and denseness claims in Bertram's theorem can fail for curves of positive genus when $e$ is small but positive.
\begin{ex} Let $C$ be a curve of genus $1$. As usual $Q_0 = G$, which has dimension $rs$ rather than the expected dimension 0, but Schubert varieties do intersect properly in $G$. A simple example where properness fails is $Q_1$ with $r=1$. Degree-one line bundles $L$ on $C$ have only a single section, hence are not globally generated, so the maps $L^* \to \OO_C^{r+s}$ must drop rank at a point. Thus every quotient in $Q_1$ has torsion, hence must be of the form $\OO^s \oplus \CC_p$ for some $p \in C$. Thus the map $\beta$ in the diagram
\[\xymatrix{
	G_{1,p} \ar[d]_{\pi} \ar[r]^{\beta} & Q_1 \\
	C \times Q_0
}\]
is an isomorphism. By chance, $Q_1$ has the expected dimension $s+1$ (since $rs + r = r+s$ when $s=1$). However, the Schubert variety $\overline{W}_s(p)$ splits into two pieces on $\Gr(1)$. The first is $\pi^{-1}(C \times \overline{W}_s(p))$, which has the correct codimension $s$ in $G_{1,p}$, but the second is the entire preimage $\pi^{-1}(\{p\} \times Q_0)$, which has only codimension 1 in $G_{1,p}$.
\end{ex}

Despite the disheartening example, Quot schemes of trivial bundles on curves of arbitrary genus are well-behaved when $e$ is sufficiently large. In fact, this is true for Quot schemes of arbitrary vector bundles:
\begin{thm}[\cite{PR03}]\label{t:quot_large_degree} For any vector bundle $V$ on $C$, there is an integer $e_0$ such that for all $e \ge e_0$, $Q_{e,V}$ is irreducible, generically smooth, of the expected dimension $rd + (r+s)e - rs(g-1)$, and the subscheme $U_{e,V}$ of torsion-free quotients is open and dense.
\end{thm}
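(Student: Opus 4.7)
The plan is to prove all four assertions together for $e \ge e_0$, with the induction on $e$ providing both the expected-dimension and density pieces. The three main ingredients I would assemble are: generic smoothness of expected dimension via Serre duality, irreducibility via a fibration over the moduli of rank-$r$ bundles, and a boundary dimension count using the structure from Proposition \ref{p:structure}.

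For generic smoothness and expected dimension on $U_{e,V}$, I would work at a point $[0 \to E \to V \to F \to 0]$ with both $E$ and $F$ locally free, so the tangent space is $\Hom(E,F)$ and obstructions live in $\Ext^1(E,F) \simeq \Hom(F, E \otimes K_C)^\vee$ by Serre duality. Choosing $E$ to be a general semistable rank-$r$ vector bundle of degree $-e$ (for $g \ge 2$; one uses balanced bundles in the $g=0,1$ cases) and $F$ the cokernel of a general injection $E \hookrightarrow V$, the slopes $\mu(F) = (d+e)/s$ and $\mu(E \otimes K_C) = -e/r + 2g - 2$ satisfy $\mu(F) > \mu(E \otimes K_C)$ for $e$ large, which forces $\Hom(F, E \otimes K_C) = 0$ by semistability. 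Hence $U_{e,V}$ contains a smooth point of the expected dimension $\chi(E^\vee \otimes F) = rd + (r+s)e - rs(g-1)$.

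For irreducibility, I would use the rational map $U_{e,V} \dashrightarrow M_r(-e)$ sending $[E \subset V]$ to $[E]$. On the open locus where $E$ is stable, the fiber over $[E]$ is the open subscheme of $\PP(\Hom(E,V))$ parametrizing injections with torsion-free cokernel; the Ext vanishing above makes $\dim \Hom(E, V) = \chi(E^\vee \otimes V)$ constant in this range, so the map is a projective bundle over its image. Since $M_r(-e)$ is irreducible and so are the fibers, $U_{e,V}$ is irreducible and generically smooth of the expected dimension.

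For density, Proposition \ref{p:structure}(b) shows that $Q_{e,V} \setminus U_{e,V}$ coincides with the image of $\beta_1 \colon \Gr(\mathcal{E}_{e-1,V}, 1) \to Q_{e,V}$. Inducting on $e$, assuming $Q_{e-1,V}$ has the expected dimension, the Grassmann bundle has dimension
\[
    \dim Q_{e-1, V} + 1 + (r-1) = \dim Q_{e,V} - s < \dim Q_{e,V},
\]
so the complement is nowhere dense, and combined with the previous steps $U_{e,V}$ is open and dense in an irreducible $Q_{e,V}$ of the expected dimension. The main obstacle I anticipate is making the generic-smoothness argument uniform: one must show that generic rank-$r$ subsheaves of $V$ of degree $-e$ really are semistable (and that their quotients are controllable enough that the $\Hom$-vanishing survives), which requires a uniform bound on the Harder-Narasimhan types of subsheaves of $V$ and a careful tracking of how the constant $e_0$ depends on $V$.
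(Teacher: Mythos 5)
The theorem is cited from \cite{PR03}; the paper gives no proof of its own, so your sketch is a fresh attempt rather than a reconstruction. The generic-smoothness ingredient is correct and standard: for $E$ semistable and $e$ large, $\mu(F) > \mu(E\otimes K_C)$ forces $\Hom(F, E\otimes K_C)=0$, so a point $[E\subset V]$ with $E$ and $F$ locally free is unobstructed. The remaining two pieces, however, each have a real gap.

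The irreducibility argument does not close. A fibration over $M_r(-e)$ only controls the locus $U^{\mathrm{st}}_{e,V}\subset U_{e,V}$ where $E$ is stable, and the rational map is undefined off that locus. To conclude irreducibility of $U_{e,V}$ you must show that the complementary locus (where $E$ is strictly semistable or unstable) has strictly smaller dimension, and that the image of the fibration is dense in $M_r(-e)$ (i.e.\ that general stable $E$ of degree $-e$ embed in $V$ with torsion-free cokernel). The first of these is delicate precisely because for unstable $E$ one has $\hom(E,V)=\chi(E^\vee\otimes V)+h^1(E^\vee\otimes V)$, and $h^1$ can be made positive by a destabilizing subbundle of $E$ of high slope; so the naive ``same $\chi$, fewer $E$'s'' count is not automatic. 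You flag the semistability issue in your last paragraph but do not indicate how to bound the unstable stratum, which is the crux.

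The density step also has a subtle circularity you should address. The inequality $\dim\Gr(\mathcal{E}_{e-1},1) = \dim Q_{e-1,V} + r < \dim Q_{e,V}$ needs $\dim Q_{e-1,V}$ to be (close to) the expected dimension, but at the borderline $e = e_0$ this is exactly what is not yet known for $e-1$. One needs either a uniform \emph{a priori} bound on the excess dimension of the small-$e$ Quot schemes that decays under the inductive step (this is the mechanism the paper itself builds in Lemma \ref{l:proper_trivial} for the properness of Schubert intersections), or an argument that every component of $Q_{e,V}$ has dimension at least the expected dimension (true, since the Quot scheme is locally a degeneracy locus) together with a boundary bound that does not presuppose the expected dimension one step down. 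As written the sketch assembles plausible pieces but leaves both of these load-bearing estimates open.
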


\begin{rmk} The case $V = \OO_C^{r+s}$ was proved earlier in \cite{BDW96}. This case is particularly geometric since $Q_{e,C}$ compactifies the space of maps $\Mor_e(C,G)$.
\end{rmk}

With this theorem, we can implement Strategy \ref{strategy:proper} for $Q_{e,V}$ with $V$ arbitrary and $e \gg 0$, with the slight complication that the induction on $e$ requires some care.


\begin{lem}\label{l:proper_trivial} Let $V$ be any vector bundle on $C$. Let $e_0$ be chosen such that $Q_e = Q_{e,V}$ has the properties listed in Theorem \ref{t:quot_large_degree} for all $e \ge e_0$. Let $\nu \ge 0$ be the maximum failure over all Schubert intersections on $Q_{e_0+i}$ for all $0 \le i \le r$. Then for all $t \ge 0$, the maximum failure of general Schubert intersections on $Q_{e_0+r+t}$ is $\le \max(\nu-t,0)$.
\end{lem}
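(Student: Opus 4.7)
The plan is to induct on $t \geq 0$ following Strategy \ref{strategy:proper}. The base case $t = 0$ is immediate since $Q_{e_0+r}$ falls in the range to which the hypothesis applies, bounding its failure by $\nu = \max(\nu, 0)$.

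For the inductive step, fix $t \geq 1$, set $e = e_0 + r + t$, and let $W = \bigcap_{i=1}^N \overline{W}_{\vec{a}_i}(p_i)$ be a general Schubert intersection of total codimension $A$ on $Q_e$. Theorem \ref{t:quot_large_degree} ensures $Q_e$ is irreducible of expected dimension with $U_e$ open and dense, so Lemma \ref{l:morphism_proper} applied to the evaluation maps $\ev_{p_i} \colon U_e \to G$ makes $W \cap U_e$ proper. For components $W_0 \subset W \cap U_e^c$, I pull back via $\beta_1 \colon \Gr(\mathcal{E}_{e-1}, 1) \to Q_e$ (whose image is $U_e^c$ by Proposition \ref{p:structure}(b)) and decompose $\beta_1^{-1}(W)$ using Corollary \ref{c:types}. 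Each Type 1 piece is $\pi_1^{-1}(C \times W')$ for a Schubert intersection $W'$ of total codim $A$ on $Q_{e-1}$, with failure $\leq \max(\nu - t + 1, 0)$ by the inductive hypothesis; since $\pi_1$ is a $\PP^{r-1}$-bundle and $\beta_1$'s image has codim $s$ in $Q_e$, the resulting codim in $Q_e$ is at least $s + A - \max(\nu - t + 1, 0) \geq A - \max(\nu - t, 0)$, using $s \geq 1$.

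For each Type 2 piece $\pi_1^{-1}(\{p\} \times W'_k) \cap \hat{W}_{\vec{b}}(p)$ inside $G_{1, p}$, I use the stratification $G_{1, p} = U_{1, p} \sqcup \bigsqcup_{\ell' > 1} \beta_{1, p}^{-1}(Z_{\ell', p})$ from Corollary \ref{c:stratification} and factor each stratum through the embedding $\beta_{\ell', p}|_{U_{\ell', p}} \colon U_{\ell', p} \hookrightarrow Z_{\ell', p} \subset Q_e$. On $U_{\ell', p}$, Remark \ref{r:degeneracy_locus} identifies $\hat{W}_{(\vec{a}_k)_{\ell'+1,\dots,r}}(p)$ as the pullback of a Schubert variety under a morphism to $\Gr(r-\ell', V(p))$, while $\pi_{\ell', p} \colon U_{\ell', p} \to Q_{e - \ell'}$ is a Grassmann bundle to whose base I apply the inductive hypothesis (when $\ell' \leq t$) or the hypothesis of the lemma (when $\ell' > t$, in which case $e - \ell' \in \{e_0, \dots, e_0 + r - 1\}$), yielding a failure bound $f_{\ell'}$ equal to $\max(\nu - (t - \ell'), 0)$ or $\nu$ respectively. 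Combining via Lemma \ref{l:morphism_proper} component-wise with general flags and pushing forward through $Z_{\ell', p}$ (codim $\ell'(s + \ell')$ in $Q_e$), the codim in $Q_e$ is at least $A + \ell'^2 - f_{\ell'}$, using $(\vec{a}_k)_i \leq s$.

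The main obstacle is the elementary but tedious verification that $\ell'^2 + \max(\nu - t, 0) \geq f_{\ell'}$ across all sign regimes of $\nu - t$ and $\ell' - t$ and for all $\ell' \geq 1$. The geometric upshot is that each additional unit of rank-drop at $p$ buys $\ell'^2$ of extra codim in $Q_e$ via $Z_{\ell', p}$, which always outpaces the worst-case failure inherited from the lower-degree Quot scheme $Q_{e-\ell'}$.
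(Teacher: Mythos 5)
Your proof is correct and follows essentially the same strategy as the paper: induction on $t$, decomposition of $\beta_1^{-1}(W)$ into Type 1 and Type 2 pieces, further stratification of $G_{1,p}$ by rank drop, and an application of the dimension count from Corollary \ref{c:stratification} plus the pullback descriptions of the Schubert conditions, with the inductive hypothesis applied to $Q_{e-\ell'}$ when $\ell' \le t$ and the base assumption when $\ell' > t$. The final inequality $\ell'^2 + \max(\nu-t,0) \ge f_{\ell'}$ is the same arithmetic the paper verifies, just packaged slightly differently.
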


\begin{proof}
Induction on $t$. The base case $t=0$ is trivial by definition of $\nu$. Suppose $t > 0$ and that $W$ is a Schubert intersection on $Q_{e_0+r+t}$ with failure $\nu' > 0$. We show that $\nu' \le \nu - t$ by proving appropriate inequalities on a stratification of $Q_{e_0+r+t}$.

As usual, the Schubert intersections are proper on the (dense) open subscheme of morphisms. The boundary is the image of the map $\beta$ in the diagram
\[\xymatrix{
	\Gr(1):=\Gr(\mathcal{E}_{e_0+r+t-1},1) \ar[d]_{\pi} \ar[r]^-{\beta} & Q_{e_0+r+t} \\
	C \times Q_{e_0+r+t-1}.
}\]
By choice of $e_0$, the image of $\beta$ has codimension $s$ in $Q_{e_0+r+t}$, so it suffices to prove that $\beta^{-1}(W)$ has failure $\le \nu - t + s$ in $\Gr(1)$. Recall from Corollary \ref{c:types} that $\beta^{-1}(W)$ decomposes into two types of components in $\Gr(1)$. For the Type 1 component, the inductive hypothesis guarantees that $W$ has failure at most $\nu - t + 1$ on $Q_{e_0+r+t-1}$, which suffices since $s \ge 1$.
  
Each Type 2 component is the intersection of $\pi^{-1}(C \times W')$, where $W'$ is an intersection of Schubert varieties in $Q_{e_0+r+t-1}$, with a degeneracy locus $\hat{W}_{a_{\ell+1},\dots,a_r}(p)$ contained in $G_{1,p}=\pi^{-1}(\{p\} \times Q_{e_0+r+t-1})$ for some $p$. Since $G_{1,p}$ has codimension one in $\Gr(1)$, it suffices to show that the failure of this component on $G_{1,p}$ is $\le \nu-t+s+1$. By Corollary \ref{c:stratification}, there is a stratification
\[
	G_{1,p} = U_{1,p} \sqcup \bigsqcup_{1 \le \ell \le r} \beta_{p}^{-1}(Z_{\ell,p}),
\]
where $\beta_p$ is the restriction of $\beta$ to $G_{1,p}$ and $Z_{\ell,p}$ is the image in $Q_{e_0+r+t}$ of the open set $U_{\ell,p} \subset G_{\ell,p}$ consisting of elementary modifications yielding a subsheaf that drops rank by exactly $\ell$ at $p$. Moreover, $\beta_p^{-1}(Z_{\ell,p})$ has codimension $(\ell-1)(s+\ell)$ in $G_{1,p}$. Thus it suffices to prove that the failure of this Type 2 component on each $U_{\ell,p}$ is
\[
	\le \nu-t+s+1 + (\ell-1)(s+\ell) = \nu - t + 1 + \ell s + \ell(\ell - 1). \tag{$\dagger$}
\]

There are structure maps $U_{\ell,p} \to Q_{e_0+r+t-\ell}$ and by induction the failure of $W'$ is $\le \nu - \max(t-\ell,0)$ or equals 0. The degeneracy locus $\hat{W}_{a_{\ell+1},\dots,a_r}(p)$ is the preimage of $W_{a_{\ell+1},\dots,a_r}$ under the map $U_{\ell,p} \to \Gr(\ell,\CC^{r+s})$, yielding failure $\sum_{i=1}^{\ell} a_i \le \ell s$. Thus the failure on $U_{\ell,p}$ is
\[
	\le \nu - \max(t-\ell,0) + \ell s
\]
or 0, which together with the inequalities $(\ell-1)^2 \ge 0$ and $\max(t-\ell,0) - (t- \ell) \ge 0$ implies 
the inequality ($\dagger$).
\end{proof}

The lemma allows us to deduce the following generalization of Theorem \ref{t:proper_P1}. The statement about top intersections appeared in \cite{Ber94}.

\begin{prop}\label{p:quot_trivial_props} Let $V$ be arbitrary. Then for all $e \gg 0$,
\begin{enumerate}[(a)]
\item Intersections $W$ of general Schubert varieties in $Q_{e,V}$ are proper;
\item $W \cap U_{e,V}$ is dense in $W$;
\item Top intersections $W$ are finite, reduced, and contained in $U_{e,V}$.
\end{enumerate}
\end{prop}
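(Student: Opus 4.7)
The plan is to deduce all three statements by pushing Lemma~\ref{l:proper_trivial} slightly further, so that for $e$ sufficiently large the failure bounds on the boundary strata of $Q_{e,V}$ become \emph{strict}. Part (a) is immediate: choosing $e$ so that $t := e - e_0 - r \ge \nu$ forces the maximum failure to be $0$, and then every intersection $W$ of general Schubert varieties has pure codimension equal to the total size $A$ of the partitions.

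For (b), I would revisit the boundary stratification used in the proof of Lemma~\ref{l:proper_trivial}. By Corollary~\ref{c:stratification}, $U_{e,V}^c$ decomposes set-theoretically into locally closed strata $Z_{\ell,p} = \beta_{\ell,p}(U_{\ell,p})$ of codimension $\ell(s+\ell)$ in $Q_{e,V}$, with $\beta_{\ell,p}$ injective on $U_{\ell,p}$. The inductive Type~1/Type~2 analysis in the lemma bounds the failure of $\beta_{\ell,p}^{-1}(W)$ on $U_{\ell,p}$ by $\nu - \max(t-\ell,0) + \ell s$, and the additional condition $t \ge \nu + 1$ shrinks this bound to $\ell s$, which is \emph{strictly} less than $\ell(s+\ell)$ for every $\ell \ge 1$. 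Consequently $W \cap Z_{\ell,p}$ has codimension $> A$ in $Q_{e,V}$, and so dimension strictly less than $\dim W$. Thus no component of $W$ is contained in the boundary, which proves (b). The delicate point is verifying that this inductive bound upgrades to a strict inequality \emph{simultaneously} for every $\ell$ and every $p$, which is exactly what the uniform choice $t \ge \nu + 1$ secures; I expect this bookkeeping (tracking both the Type~1 contribution from $Q_{e-\ell,V}$ and the Type~2 degeneracy-locus contribution of codimension $\le \ell s$) to be the main obstacle.

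For (c), a top intersection has $A = \dim Q_{e,V}$, so by (a) it is zero-dimensional and hence finite. The strict codimension bound from the previous paragraph forces $\dim(W \cap Z_{\ell,p}) < 0$, i.e.\ $W \cap Z_{\ell,p} = \emptyset$, which yields $W \subset U_{e,V}$. Reducedness then reduces to a transversality question on $U_{e,V}$: each $\overline{W}_{\vec{a}_i}(p_i) \cap U_{e,V}$ agrees with $\ev_{p_i}^{-1}\bigl(W_{\vec{a}_i}(V_\bullet^{(i)})\bigr)$ under the evaluation morphism to the Grassmannian, and since $W$ lies in the smooth locus of $Q_{e,V}$ (it is a proper intersection inside a generically smooth equidimensional scheme), Kleiman's transversality theorem applied as in Lemma~\ref{l:morphism_proper} shows that for general flags the intersection is transverse, and hence reduced.
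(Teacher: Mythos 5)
The overall strategy matches the paper's — parts (a), (b), (c) are all extracted from Lemma~\ref{l:proper_trivial} by pushing $e$ slightly larger so that the failure estimates become strict — and part (a) is exactly the paper's argument. Two of your steps need repair.

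For (b), there are two issues. First, Corollary~\ref{c:stratification} as stated stratifies $U_{\ell,p}^c$ inside the Grassmann bundle $G_{\ell,p}$, not $U_{e,V}^c$ inside $Q_{e,V}$; the codimension formula for $Z_{\ell,p}$ in $Q_e$ is equation~(\ref{eq:quot_strat_dim}), and you are invoking that. Second, and more importantly, the passage ``$W \cap Z_{\ell,p}$ has codimension $> A$ in $Q_{e,V}$ $\ldots$ Thus no component of $W$ is contained in the boundary'' is a gap: that estimate holds for each \emph{fixed} $p$, but as $p$ varies in the $1$-dimensional $C$ the union $\bigcup_p(W \cap Z_{\ell,p})$ can pick up a dimension, and for $\ell=1$ the slack $\ell^2-1$ is exactly zero, so the per-$p$ count alone is not enough. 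The correct way to absorb the $C$-variation is precisely what the paper does: run the whole analysis inside the Grassmann bundle $\Gr(\mathcal{E}_{e-1},1)$, whose base is $C\times Q_{e-1}$ and so already contains the $C$-parameter, and show that the failure of $\beta_1^{-1}(W)$ in $\Gr(1)$ is strictly less than $s$. The Type~1 piece, $\pi^{-1}\bigl(C\times W'\bigr)$, has the generic-$p$ behavior built in, while the Type~2 pieces live over finitely many fixed $p_i$; once failure $<s$ is established there, surjectivity of $\beta_1$ onto the boundary rules out a boundary component of $W$. Also a minor arithmetic point: the bound $\ell s$ only holds for $t\ge \nu+\ell$; for $t\ge\nu+1$ the right bound is $\max(\nu-\max(t-\ell,0),0)+\ell s$, which is still strictly less than $\ell(s+\ell)$ for every $\ell\ge 1$, so your conclusion survives, but the intermediate claim needs adjustment.

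For (c), the parenthetical justification for ``$W$ lies in the smooth locus of $Q_{e,V}$'' --- that it is a proper intersection inside a generically smooth equidimensional scheme --- is not a valid inference; a proper (even zero-dimensional) intersection can a priori fall inside the singular locus. What is needed is one more application of Kleiman's theorem to the singular locus $\Sigma$ (which has codimension $\ge 1$ by generic smoothness): restricting the evaluation morphism to $\Sigma$ and intersecting with general Schubert varieties has codimension $A$ in $\Sigma$, so $\dim(W\cap\Sigma)\le \dim Q_{e,V}-1-A<0$, forcing $W\cap\Sigma=\emptyset$. With that supplied, your Kleiman-transversality argument at the smooth points gives reducedness, which is equivalent to the paper's phrasing in terms of the evaluation map to $G^N$ having reduced general fibers and the intersection avoiding the branch locus.
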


\begin{proof} The properness claim follows from the lemma by taking $e \ge e_0 + r + \nu$. For the denseness statement, note that since $Q_{e,V}$ is irreducible and the codimension of $W$ cannot be greater than its total size $A$ since it is a degeneracy locus, it suffices to show that $W$ actually has failure $< s$ in $\Gr(1)$. We can achieve this by taking $e \ge e_0 + r + \nu$ and copying the proof of the lemma, replacing the $\nu-t$ appearing in the inductive estimates on the failure with zero. For the statement about top intersections, the only part left to be shown is reducedness. But since $W$ is contained in $U_{e,V}$ and $Q_{e,V}$ is generically reduced, the general fibers of the evaluation map to $G^N$ (where $N$ is the number of Schubert varieties) are finite and hence reduced, so the intersection with general Schubert varieties will avoid the branch locus.
\end{proof}

In the next sections, we will need only the case when $V = \OO_C^{r+s}$. For this case, an important result ensures that the intersection theory of the Quot scheme is independent of the choice of the curve in moduli.
\begin{prop}[\cite{Ber94}]\label{p:chern_classes} For all $e \gg 0$,
\begin{enumerate}[(a)]
\item The intersection numbers on $Q_{e,C}$ can be computed as an integral of Chern classes of the universal subsheaf;
\item The intersection numbers on $Q_{e,C}$ do not depend on the choice of smooth curve $C$ of genus $g$. 
\end{enumerate}
\end{prop}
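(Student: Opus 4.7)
The plan is a two-step argument: first reduce intersection numbers on $Q_{e,C}$ to integrals of polynomials in Chern classes of the universal subsheaf, then propagate the resulting number across the moduli of curves via a relative Quot scheme.

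For part (a), recall that when $V = \OO_C^{r+s}$, the flag $V_\bullet$ on $V(p) = \CC^{r+s}$ induces trivial quotient bundles $V^i \otimes \OO$ on $Q_{e,C}$, so the Schubert variety $\overline{W}_{\vec{a}}(p,V_\bullet)$ is the degeneracy locus of the maps $\mathcal{E}|_{\{p\}\times Q_{e,C}} \to V^{s+i-a_i} \otimes \OO$. By Proposition~\ref{p:quot_trivial_props}(a), for $e \gg 0$ this degeneracy locus has the correct codimension $|\vec{a}|$, so the theorem of Kempf--Laksov expresses $\bar{\sigma}_{\vec{a}}$ as an explicit Schur-determinantal polynomial in the Chern classes $c_i(\mathcal{E}|_{\{p\} \times Q_{e,C}})$; moreover these Chern classes are independent of $p$ since $\mathcal{E}$ deforms flatly over the connected curve $C$. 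Combining Proposition~\ref{p:quot_trivial_props}(a) with the associativity of the cup product, the integrand $\bar{\sigma}_{\vec{a}_1} \cup \cdots \cup \bar{\sigma}_{\vec{a}_N}$ is then a fixed polynomial $P_{\vec{a}_1,\dots,\vec{a}_N}$ in the Chern classes of $\mathcal{E}|_{\{p\} \times Q_{e,C}}$, and the intersection number is $\int_{Q_{e,C}} P_{\vec{a}_1,\dots,\vec{a}_N}$.

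For part (b), pick any two smooth curves $C_0,C_1$ of genus $g$. By irreducibility of $\mathcal{M}_g$, we may choose a smooth irreducible base $B$ and a smooth proper family $\pi \colon \mathcal{C} \to B$ of genus-$g$ curves whose fibers over two chosen points of $B$ are $C_0$ and $C_1$. Grothendieck's relative Quot scheme
\[
	p \colon \mathcal{Q}_e := \Quot_{\mathcal{C}/B}\bigl((r,-e),\OO_{\mathcal{C}}^{r+s}\bigr) \longrightarrow B
\]
is projective over $B$, and for $e \gg 0$ Theorem~\ref{t:quot_large_degree} applied fiberwise guarantees that every fiber is irreducible of the same expected dimension $(r+s)e - rs(g-1)$. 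Constancy of the Hilbert polynomial on fibers then forces $p$ to be flat. The universal subsheaf $\widetilde{\mathcal{E}}$ on $\mathcal{C} \times_B \mathcal{Q}_e$, restricted along any section of $\mathcal{C} \to B$ (passing to an \'etale cover if necessary, which does not affect the intersection number on the geometric fibers), yields a vector bundle on $\mathcal{Q}_e$ whose fiberwise Chern classes specialize to those appearing in part (a).

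Now consider the class $P_{\vec{a}_1,\dots,\vec{a}_N}(c_\bullet(\widetilde{\mathcal{E}}|_{\mathrm{sec}(B) \times_B \mathcal{Q}_e}))$ of degree equal to the relative dimension of $p$; its pushforward $p_*$ is a cycle class of degree $0$ on the smooth connected base $B$, hence a constant function on $B$. By proper base change this constant equals $\int_{Q_{e,C_b}} P_{\vec{a}_1,\dots,\vec{a}_N}$ for every $b \in B$, so the intersection numbers on the fibers agree. The main technical point is ensuring flatness of the relative Quot scheme for $e \gg 0$, which follows from the constant Hilbert polynomial; once this is in place, the invariance of Chern-class integrals under flat deformation finishes the argument.
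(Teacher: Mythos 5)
Your proof follows essentially the same strategy as the paper: part (a) via Kempf--Laksov and proper intersections, part (b) via a relative Quot scheme over a family connecting the two curves, with intersection numbers preserved because they are Chern-class integrals and the family is flat. The paper's proof of (b) is only a summary deferring to Proposition 1.5 of Ber94, so your attempt to fill in details is welcome, but one step is asserted rather than established.

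The gap is the sentence ``Constancy of the Hilbert polynomial on fibers then forces $p$ to be flat.'' You have shown (via Theorem \ref{t:quot_large_degree}) that all fibers are irreducible of the same \emph{dimension}; you have not shown that they have the same \emph{Hilbert polynomial} with respect to a relatively ample line bundle, which is what Hartshorne III.9.9 actually requires. Equidimensionality of fibers does not imply flatness, so this is a genuine missing step. One way to close it: show the relative Quot scheme is integral (the torsion-free locus $U$ is smooth over $B$ for $e \gg 0$ by vanishing of obstructions, hence integral, and $U$ is fiberwise dense), and then observe that a dominant morphism from an integral scheme to a smooth curve $B$ is automatically flat. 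Alternatively, one could argue as the paper does for the nodal case in Lemma \ref{l:nodal_deformation}, replacing the central fiber by its flat limit and checking that this does not change the top-dimensional components (hence the intersection numbers). A secondary point, easily repaired but worth noting, is that the bound $e_0$ in Theorem \ref{t:quot_large_degree} a priori depends on the individual curve, so one should argue that a single bound works uniformly over the family (e.g.\ by openness of the locus where $Q_{e,C_b}$ is irreducible of the right dimension and properness of $B$).
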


\begin{proof}
(a): The cohomology classes of the dual of the universal subbundle on the Grassmannian (which are $\sigma_{1^k}$ for $1 \le k \le r$) generate the cohomology ring of the Grassmannian. On $\Mor_e(C,G)$, the same formulas express $\bar{\sigma}_{\vec{a}}$ in terms of the Chern classes of the dual of the universal subbundle on $Q_{e,C}$, and the previous proposition ensures that any difference on the boundary is irrelevant for computing intersection numbers.

(b): This is Proposition 1.5 in \cite{Ber94}. Since we will make a similar argument for Proposition \ref{l:nodal_deformation}, we will just summarize the key steps in the proof. Consider a family of smooth curves of fixed genus over a base curve $B$. The relative Quot scheme $Q \to B$ of the trivial bundle contains the Quot schemes of trivial bundles over each of the curves in the family as its fibers over $B$. The intersection numbers of each Quot scheme can be computed in terms of Chern classes of its universal subbundle, and flatness of $Q$ implies that these products of Chern classes are independent of the base point (by expressing them in terms of Chern classes of the universal subbundle of the relative Quot scheme).
\end{proof}

Part (a) generalizes to arbitrary $V$ without any trouble, but (b) is unclear.

\section{Quot schemes of general vector bundles}\label{s:quot_general}
In the previous section, we showed that intersections of Schubert varieties are proper on Quot schemes of trivial bundles if the subsheaves have sufficiently negative degree. In order to get properness results on Quot schemes of general vector bundles $V$, we embed them in Quot schemes of trivial bundles by using elementary modifications. We conclude that for a very general vector bundle, \emph{all} Quot schemes have the expected dimension and proper Schubert intersections.

\subsection{General elementary modifications produce general vector bundles}\label{ss:elementary_modification}

We begin with some terminology. Let $C$ be a curve of genus $g$. We will say that a vector bundle $V$ on $C$ is \emph{stable} if
\[
	\begin{cases}
		\text{$V$ is balanced} & \text{if $g = 0$}; \\
		\text{$V$ is semistable} & \text{if $g = 1$}; \\
		\text{$V$ is stable} & \text{if $g \ge 2$}.
	\end{cases}
\]
A vector bundle on $\PP^1$ is \emph{balanced} if its splitting $V=\bigoplus_{i=1}^{\mathrm{rk}(V)} \OO_{\PP^1}(d_i)$ has the property that $|d_i - d_j| \le 1$ for all $1 \le i,j \le \mathrm{rk}(V)$. There is a moduli space $M(r,d)$ parametrizing all vector bundles of rank $r$ and degree $d$ that are stable in this sense (on $\PP^1$, $M(r,d)$ is just a point). When we call the vector bundle $\emph{general}$, we mean that it is stable and does not lie on a finite collection of proper closed subschemes of $M(r,d)$ when $g \ge 1$. Even stronger, we say the vector bundle is \emph{very general} if it does not lie on a countable collection of proper closed subschemes of $M(r,d)$.

We begin by showing that on $\PP^1$, stability (balancedness) of vector bundles is preserved by general elementary modifications.

\begin{lem} Suppose a vector bundle $V$ on $\PP^1$ of positive rank is balanced. Then for any $p \in \PP^1$, the kernels of general elementary modifications $V \surj \CC_p$ are balanced.
\end{lem}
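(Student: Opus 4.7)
The plan is to parametrize the elementary modifications $V \surj \CC_p$ by a projective space, exhibit one element whose kernel is balanced, and then invoke upper semicontinuity of splitting type to conclude that the balanced locus is dense open.

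Write $V \cong \OO(d)^{\oplus a} \oplus \OO(d+1)^{\oplus b}$ with $n := a+b = \mathrm{rk}(V)$; balancedness of $V$ forces this shape. The nonzero surjections $V \surj \CC_p$, up to rescaling the target, are parametrized by $B := \mathbb{P}(V(p)^*) \cong \mathbb{P}^{n-1}$, and each $[\phi] \in B$ determines a kernel $V'_{[\phi]} \subset V$ of rank $n$ and degree $\deg V - 1$. These kernels fit into a flat family of rank-$n$ vector bundles on $\PP^1 \times B$, realized as the kernel of the universal composition $V \boxtimes \OO_B \to V(p) \otimes \OO_{\{p\} \times B} \surj \OO_B(1)$ coming from the tautological quotient on $\mathbb{P}(V(p)^*)$.

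The next step is to exhibit one $\phi$ whose kernel is balanced. If $b \ge 1$, pick a splitting of $V$ and take $\phi$ to vanish on every summand except one chosen $\OO(d+1)$; then the kernel decomposes as the untouched summands together with $\ker(\OO(d+1) \surj \CC_p) \cong \OO(d)$, giving $V'_{[\phi]} \cong \OO(d)^{\oplus a+1} \oplus \OO(d+1)^{\oplus b-1}$, which is balanced. If $b = 0$, so $V = \OO(d)^{\oplus n}$, every nonzero $\phi$ already suffices: $V'(-d)$ embeds in $\OO^{\oplus n}$, so $h^0(V'(-d-1)) = 0$ forces each summand of $V'$ to have degree $\le d$, and the only way for $n$ such integers to sum to $nd - 1$ is $(d-1, d, \dots, d)$.

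Finally, I would apply upper semicontinuity of splitting type (equivalently, of $h^0$ of each twist) to the family $V'_{[\phi]}$: for any partition $\mu$, the locus $\{[\phi] \in B : V'_{[\phi]} \text{ has splitting type dominating } \mu\}$ is closed. Hence the balanced locus in $B$ is open and, by the previous step, nonempty; since $B$ is irreducible, it is dense. The main technical input is the semicontinuity of splitting type in flat families of vector bundles on $\PP^1$, which is classical (Shatz); the rest reduces to the explicit computation in a single fiber given above.
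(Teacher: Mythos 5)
Your proof is correct but uses a genuinely different mechanism from the paper. The paper argues directly for a general $\phi$: since a general surjection $V \surj \CC_p$ is nonzero on at least one maximal-degree summand $\OO(d+1)$, one obtains a commutative diagram with rows $\OO(d) \inj \OO(d+1) \surj \CC_p$ and $V' \inj V \surj \CC_p$, whence the snake lemma gives an extension $0 \to \OO(d) \to V' \to \OO(d)^a \oplus \OO(d+1)^{b-1} \to 0$, and this splits because $\Ext^1(\OO(d), \OO(d)) = \Ext^1(\OO(d+1), \OO(d)) = 0$ on $\PP^1$. You instead exhibit a single, deliberately non-generic $\phi$ (supported on one summand, or arbitrary when $V$ is a twist of the trivial bundle), verify its kernel is balanced, and then transfer to the general $\phi$ via Shatz semicontinuity of the splitting type in the flat family over $\PP(V(p)^*)$. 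Both arguments are sound; the paper's is more self-contained and explicit (it identifies the kernel for every sufficiently general $\phi$ without invoking stratification theory), while yours is softer and cleanly isolates the one genuinely nontrivial input, semicontinuity, at the cost of citing it. One small remark: your construction of a balanced fiber in the $b \geq 1$ case is essentially the special case of the paper's diagram where $\phi$ kills all but one summand, so the semicontinuity step is precisely what replaces the paper's Ext-vanishing argument for general $\phi$.
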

\begin{proof} Since $V$ is balanced, $V \simeq \OO(d)^{a} \oplus \OO(d+1)^{b}$ for some $d \in \mathbb{Z}$, $a \ge 0$, and $b > 0$. A general elementary modification $V \surj \CC_p$ induces a surjection $\OO(d+1) \surj \CC_p$ on one of the summands of $V$ of type $\OO(d+1)$, yielding a commutative diagram
\[\xymatrix{
	0 \ar[r] & \OO(d) \ar@{^(->}[d] \ar[r] & \OO(d+1) \ar@{^(->}[d] \ar[r] & \CC_p \ar@{=}[d] \ar[r] & 0 \\
	0 \ar[r] & V' \ar[r] & V \ar[r] & \CC_p \ar[r] & 0
}\]
The exact sequence of cokernels implies that the cokernels of the first two vertical maps are isomorphic, hence there is an exact sequence $\OO(d) \inj V' \surj \OO(d)^a \oplus \OO(d+1)^{b-1}$. But every such extension splits, so $V' \simeq \OO(d)^{a+1} \oplus \OO(d+1)^{b-1}$, which is balanced.
\end{proof}

Next, we prove the key result that allows us to relate Quot schemes of general vector bundles to Quot schemes of trivial bundles.

\begin{prop}\label{p:genbun=genmod} Let $L$ of degree $\ell$ be a sufficiently ample line bundle such that $V^* \otimes L$ is globally generated and has vanishing higher cohomology for all $V \in M(r+s,d)$. Then, up to tensoring by $L^*$, general $V$ and kernels of general elementary modifications $\OO_C^{r+s} \surj \bigoplus_{i=1}^{(r+s)\ell-d} \CC_{q_i}$ coincide.
\end{prop}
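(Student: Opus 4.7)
The plan is to realize both sides of the claimed equality as images of a surjective classifying map from a Quot scheme of torsion quotients to the moduli of vector bundles, and then match dimensions. Set $N := (r{+}s)\ell - d$ and consider the Quot scheme $\mathcal{Q} := \Quot\bigl((r{+}s, -N), \OO_C^{r+s}\bigr)$, parametrizing sequences $0 \to K \to \OO_C^{r+s} \to T \to 0$ with $T$ torsion of length $N$. Because $T$ is torsion, $\Ext^1(K,T) = 0$ at every closed point, so $\mathcal{Q}$ is smooth and irreducible of dimension $(r{+}s)N$, and on the dense open locus where $T$ is a sum of $N$ distinct simple skyscrapers, $K$ is a vector bundle of rank $r{+}s$ and degree $-N$. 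I would define the classifying morphism $\phi\colon\mathcal{Q}^{\mathrm{st}}\to M(r{+}s,-N)^{\mathrm{st}}$ on the open locus where $K$ is stable.

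For surjectivity of $\phi$: given a stable $W = V\otimes L^*$, the hypothesis gives $W^* = V^*\otimes L$ globally generated with vanishing $H^1$, so $h^0(W^*) = N + (r{+}s)(1{-}g) \ge r{+}s$ once $\ell$ is large. A general $(r{+}s)$-tuple of sections of $W^*$ spans the fiber at a generic point of $C$, so the induced map $\OO_C^{r+s}\to W^*$ of rank-$(r{+}s)$ bundles is generically surjective, hence injective as a sheaf map with torsion cokernel of length exactly $\deg W^* = N$. Dualizing produces an inclusion $W\hookrightarrow \OO_C^{r+s}$ with torsion cokernel of length $N$, exhibiting $[W]\in\mathrm{im}(\phi)$.

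For the matching of generic points: the fiber of $\phi$ over a stable $[K]$ is the set of injections $K\hookrightarrow \OO_C^{r+s}$ modulo $\mathrm{Aut}(K) = \CC^*$, of dimension $(r{+}s)h^0(K^*) - 1 = (r{+}s)\bigl(N + (r{+}s)(1{-}g)\bigr) - 1$; combined with $\dim M^{\mathrm{st}} = (r{+}s)^2(g{-}1) + 1$, this sums to $(r{+}s)N = \dim\mathcal{Q}$. A surjective morphism between irreducible varieties with equi-dimensional fibers of the expected dimension is generically flat, so the preimage of any open dense subset of $M(r{+}s,-N)^{\mathrm{st}}$ is open dense in $\mathcal{Q}^{\mathrm{st}}$, and conversely. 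Thus, up to tensoring by $L^*$, general $V\in M(r{+}s,d)$ and kernels of general elementary modifications produce the same bundles.

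The main obstacle I anticipate is low genus, where the dimension of the stable moduli degenerates. For $g=0$ the moduli $M(r{+}s,d)$ is a single point (the balanced bundle), so the proposition reduces to the preservation of balancedness under general elementary modifications established in the preceding lemma. For $g=1$, where the strictly stable locus is often empty, I would reinterpret ``general'' in the semistable moduli (of dimension $\gcd(r{+}s,d)$ rather than $(r{+}s)^2(g{-}1)+1$) and repeat the existence and dimension-matching argument stratum by stratum on $M^{\mathrm{ss}}$.
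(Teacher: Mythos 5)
Your proof is correct and follows essentially the same route as the paper: construct the Quot scheme $\Quot\bigl(\OO_C^{r+s},(0,N)\bigr)$ of elementary modifications, observe it is smooth and irreducible of dimension $N(r+s)$ because $\Ext^1$ of a locally free sheaf into a torsion sheaf vanishes on a curve, show that every stable $V$ (after twisting by $L^*$) occurs in the universal family via $r+s$ general sections of $V^*\otimes L$ and a dualization, and conclude that the classifying rational map to $M(r+s,d)$ is dominant, so that ``general'' on one side corresponds to ``general'' on the other.

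Where you diverge is the fiber-dimension count and the appeal to generic flatness. This is extra machinery that the paper does not need: once you know $\mathcal{Q}$ is irreducible and the classifying rational map $\phi\colon\mathcal{Q}\dashrightarrow M(r+s,d)$ is dominant (indeed surjective on its domain of definition, since every stable bundle appears), the two-way correspondence between general points follows by a soft argument. Namely, for any proper closed $Z\subset\mathcal{Q}$, the locus of $[V]$ with $\phi^{-1}([V])\subset Z$ is contained in a proper closed subset of $M$, because $\phi|_{\mathcal{Q}\setminus Z}$ is still dominant on the irreducible $\mathcal{Q}$. This handles all genera uniformly, and so the low-genus detour you anticipate at the end is in fact unnecessary: for $g=1$, there is no need to stratify $M^{\mathrm{ss}}$ or to repair the fiber dimension (which, as you correctly note, drops from $(r+s)^2(g-1)+1$ to $\gcd(r+s,d)$ precisely because strictly semistable bundles have larger automorphism groups and $M^{\mathrm{ss}}$ parametrizes $S$-equivalence classes). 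Dominance is the only property the paper uses, and it holds in every genus without additional work. One minor imprecision: the kernel $K$ of a map $\OO_C^{r+s}\to T$ with $T$ torsion is \emph{always} a vector bundle on a smooth curve (it is torsion-free, hence locally free), not only over the open locus where $T$ is a sum of distinct simple skyscrapers, so the hedge in your first paragraph is not needed.
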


\begin{proof} Note that it is possible to find such $L$ because the Castelnuovo-Mumford regularity is bounded on $M(r+s,d)$. Let $N = (r+s)\ell-d$. The moduli space parametrizing all elementary modifications is $\Quot\big(\OO_C^{r+s},(0,N)\big)$, which is irreducible, smooth, and of the expected dimension $N(r+s)$ since $\ext^1(E,F) = 0$ whenever $E$ is locally free and $F$ is torsion. The universal sequence $0 \to \mathcal{E} \to \OO^{r+s} \to \mathcal{F} \to 0$ on  $C \times \Quot\big(\OO_C^{r+s},(0,N)\big)$ assembles the elementary modifications. Given any stable vector bundle $V$, choosing $r+s$ general sections of $V^* \otimes L$ yields a sequence
\[
	0 \to V \otimes L^* \to \OO_C^{r+s} \to T \to 0,
\]
where $T$ is torsion of length $N$. In particular, this sequence  occurs in the universal family over the Quot scheme. Since stability is an open condition in families, this guarantees that general elementary modifications parametrized by the universal sequence produce stable kernels, so by the universal property of the moduli space of sheaves, there is a dominant rational map
\[
	\Quot\left(\OO_C^{r+s},(0,N)\right) \dashrightarrow M(r+s,d),
\]
which completes the proof.
\end{proof}


\subsection{Properties of Quot schemes of general bundles}

By embedding Quot schemes of general bundles in Quot schemes of trivial bundles, we can show that the former inherit the nice properties of the latter.

\begin{prop}\label{p:quot_general} Let $V$ be very general. Then for all $e$:
\begin{enumerate}[(a)]
\item $Q_{e,V}$ is equidimensional of the expected dimension $rd + (r+s)e - rs(g-1)$;
\item $Q_{e,V}$ is generically smooth and the subscheme $U_{e,V}$ of torsion-free quotients in $Q_{e,V}$ is open and dense;
\item Intersections $W$ of general Schubert varieties are proper in each component of $Q_{e,V}$, $W \cap U_{e,V}$ is dense in $W$, and top intersections $W$ are finite, reduced, and contained in $U_{e,V}$.

\end{enumerate}
\end{prop}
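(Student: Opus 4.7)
The plan is to realize $Q_{e,V}$ as an intersection of Schubert varieties inside a Quot scheme of a trivial bundle, and then inherit the good properties from Proposition \ref{p:quot_trivial_props}. By Proposition \ref{p:genbun=genmod} applied to a sufficiently ample line bundle $L$ of degree $\ell$, we may present $V \otimes L^{-1}$ as the kernel of a general elementary modification
\[
0 \to V \otimes L^{-1} \to \OO_C^{r+s} \to \bigoplus_{i=1}^{N} \CC_{q_i} \to 0,
\]
where $N = (r+s)\ell - d$; for very general $V$, the points $q_i \in C$ and the hyperplanes $H_i := \ker(\OO_C^{r+s}(q_i) \twoheadrightarrow \CC)$ may be taken in general position. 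Since tensoring subsheaves by $L^{-1}$ induces an isomorphism $Q_{e,V} \simeq Q_{e+r\ell,\, V \otimes L^{-1}}$, by choosing $\ell$ large we may simultaneously arrange that Proposition \ref{p:quot_trivial_props} applies to $Q_{e+r\ell,\, \OO_C^{r+s}}$.

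The inclusion $V \otimes L^{-1} \hookrightarrow \OO_C^{r+s}$ induces a closed immersion $\iota \colon Q_{e+r\ell,\, V \otimes L^{-1}} \hookrightarrow Q_{e+r\ell,\, \OO_C^{r+s}}$ sending $E \subset V \otimes L^{-1}$ to $E \subset \OO_C^{r+s}$. A subsheaf $E$ factors through $V \otimes L^{-1}$ exactly when the composition $E \to \OO_C^{r+s} \to \bigoplus_i \CC_{q_i}$ vanishes, and at each $q_i$ this reduces to the condition $E(q_i) \subset H_i$ defining the Schubert variety $\overline{W}_{1^r}(q_i, H_i)$. Hence the image of $\iota$ is set-theoretically $\bigcap_{i=1}^{N} \overline{W}_{1^r}(q_i, H_i)$. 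For very general $V$, Proposition \ref{p:quot_trivial_props}(a) shows that this intersection is equidimensional of codimension $Nr$ in $Q_{e+r\ell,\, \OO_C^{r+s}}$, and the dimension count
\[
(r+s)(e+r\ell) - rs(g-1) - Nr = rd + (r+s)e - rs(g-1)
\]
recovers the expected dimension, yielding (a).

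For (b), Proposition \ref{p:quot_trivial_props}(b) asserts that $U_{e+r\ell,\, \OO_C^{r+s}}$ meets the intersection in a dense open subset. Since a torsion-free quotient $\OO_C^{r+s}/E$ has the intermediate quotient $(V \otimes L^{-1})/E$ as a subsheaf, this dense open subset is contained in $\iota(U_{e+r\ell,\, V \otimes L^{-1}})$, showing that $U_{e,V}$ is dense in $Q_{e,V}$. Generic smoothness then follows from Kleiman transversality (via Lemma \ref{l:morphism_proper}) applied to the evaluation maps at the generic points $q_i$ on the generically smooth scheme $U_{e+r\ell,\, \OO_C^{r+s}}$. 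For (c), any additional general Schubert varieties $\overline{W}_{\vec{a}_j}(p_j,\, V^j_\bullet)$ on $Q_{e,V}$ pull back under $\iota$ to Schubert varieties of the same type on $Q_{e+r\ell,\, \OO_C^{r+s}}$; combining these with the $\overline{W}_{1^r}(q_i, H_i)$ and applying Proposition \ref{p:quot_trivial_props}(a--c) shows that the combined intersection is proper, meets $U_{e+r\ell,\, \OO_C^{r+s}}$ densely, and---when top-dimensional---is a finite reduced set of points in that torsion-free locus. These properties transfer back to $Q_{e,V}$ and $U_{e,V}$ via $\iota$.

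The main obstacle is justifying that very general $V$ supplies data $(q_i, H_i)$ generic enough that Proposition \ref{p:quot_trivial_props}(a) remains available after adjoining arbitrary general Schubert varieties. This reduces to a semicontinuity argument: the locus in the full parameter space where the intersection fails to be proper is closed, so by Proposition \ref{p:quot_trivial_props}(a) its projection onto the first $N$ factors is a proper closed subset, and a very general $V$ (avoiding the resulting countable union of proper closed subschemes of the moduli space $M(r+s, d)$) provides data compatible with any general additional Schubert varieties. A secondary technical matter is upgrading the set-theoretic identification of $\iota(Q_{e,V})$ with the Schubert intersection to a scheme-theoretic one, which requires a local check at each $q_i$ but does not affect the dimension- and cycle-level conclusions above.
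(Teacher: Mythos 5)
Your proof is correct and follows essentially the same route as the paper: apply Proposition~\ref{p:genbun=genmod} to realize $V$ (after twisting) as the kernel of a general elementary modification, embed $Q_{e,V}$ into $Q_{e+r\ell,\OO_C^{r+s}}$ with image the Schubert intersection $\bigcap_i \overline{W}_{1^r}(q_i)$, and then transfer the properties from Proposition~\ref{p:quot_trivial_props}, taking $V$ very general so the construction works simultaneously for all $e$. The two points you flag at the end---the semicontinuity argument underlying the very-generality claim, and the scheme-theoretic identification of the image with the Schubert intersection---are handled only implicitly in the paper, so your more explicit discussion of them is a reasonable strengthening rather than a deviation.
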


Parts (a) and (b) of the proposition for $g \ge 2$ were proved in \cite{Hol04} using other methods.

\begin{proof}
For each $e$, choose a line bundle $L_e$ of degree $\ell_e$ sufficiently ample such that $e + r \ell_e$ is sufficiently large to ensure $Q_{e+r\ell_e}$ has the properties in Proposition \ref{p:quot_trivial_props} and also that $V^* \otimes L_e$ is globally generated with vanishing higher cohomology for all $V \in M(r+s,d)$. General elementary modifications
\[
	\OO_C^{r+s} \surj \bigoplus_{i=1}^{(r+s)\ell_e-d} \CC_{q_i}
\]
produce kernels which, when twisted by $L_e$, are general in $M(r+s,d)$. Thus choosing $V$ very general ensures that for every $e$, there is a sequence
\[
	0 \to V \otimes L_e^* \to \OO_C^{r+s} \to \bigoplus_{i=1}^{(r+s)\ell_e-d} \CC_{q_i} \to 0
\]
in which the elementary modification is general. Now, there are embeddings
\[
	Q_{e,V} \inj Q_{e+r\ell_e,C}, \quad [E \subset V] \mapsto [E \otimes L_e^* \subset V \otimes L_e^* \subset \OO_C^{r+s}]
\]
and the image consists of those subsheaves of $\OO_C^{r+s}$ whose map to the skyscraper sheaf in the elementary modification is zero, namely the image is an intersection
\[
	\bigcap_{i=1}^{(r+s)\ell_e-d} \overline{W}_{1^r}(q_i).
\]
Since the elementary modification is general, so are flags defining the $\overline{W}_{1^r}(q_i)$. Since $Q_{e+r\ell_e,C}$ has the expected dimension and proper Schubert intersections,
\[
	\dim Q_{e,V} = \dim Q_{e+r\ell_e,C} - ((r+s)\ell_e-d)r = (r+s)e + rd - rs(g-1),
\]
which is the expected dimension of $Q_{e,V}$ and proves (a).

For (b), note that quotients in $Q_{e,V}$ (twisted by $L^*$) are obtained from quotients in $Q_{e+r\ell_e,C}$ by elementary modification along the same $\bigoplus \CC_{q_i}$. If the latter quotients are torsion-free, so are the former, which proves that the intersection of $U_{e+r\ell_e,C}$ with the image of the embedding is contained in $U_{e,V}$. Now we get (b) from the same properties for $Q_{e+r\ell_e,C}$ and the fact that the intersection of the $\overline{W}_{1^r}(q_i)$ with $U_{e+r\ell_e,C}$ is dense in the image of the embedding.

For (c), we note that any intersection of Schubert varieties on $Q_{e,V}$ at points other than the $q_i$ can be expressed as the same intersection on $Q_{e+r\ell_e,C}$ together with the additional Schubert varieties $\overline{W}_{1^r}(q_i)$, and since the latter intersection is proper, so is the former. The statement about top intersections also follows immediately from the same statement for $Q_{e,V}$.
\end{proof}

The embedding in the proof of the proposition allows us to replace intersection numbers on Quot schemes of general vector bundles by intersection numbers on Quot schemes of trivial bundles.
\begin{cor}\label{c:gentotriv} Let $V$ be very general of rank $r+s$ and degree $d$. Then for all $e$ and all $\ell \gg 0$,
\[
	\int_{Q_{e,V}} \bar{\sigma}_{\underline{\vec{a}}} = \int_{Q_{e+r\ell,C}} \bar{\sigma}_{\underline{\vec{a}}} \cup \bar{\sigma}_{1^r}^{(r+s)\ell-d}.
\]
\end{cor}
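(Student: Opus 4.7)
The plan is to use the embedding $\iota \colon Q_{e,V} \hookrightarrow Q_{e+r\ell,C}$ from the proof of Proposition~\ref{p:quot_general} to reduce both integrals to counts of reduced points in zero-dimensional Schubert intersections.

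For $\ell \gg 0$, Proposition~\ref{p:genbun=genmod} realizes $V \otimes L^*$, where $L$ has degree $\ell$, as the kernel of a general elementary modification
\[
	0 \to V \otimes L^* \to \OO_C^{r+s} \to \bigoplus_{i=1}^N \CC_{q_i} \to 0, \qquad N = (r+s)\ell - d,
\]
with the $q_i$ distinct. The map $\iota$ sends $[E \subset V]$ to $[E \otimes L^* \subset \OO_C^{r+s}]$, and its image consists of those $E' \subset \OO_C^{r+s}$ for which $E'(q_i)$ is contained in the hyperplane $H_i := V \otimes L^*(q_i) \subset \CC^{r+s}$ for each $i$. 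Since the Schubert condition defining $\overline{W}_{1^r}$ depends only on the codimension-one step of the flag, choosing any flag $V^{(i)}_\bullet$ of $\CC^{r+s}$ with $V^{(i)}_{s+r-1} = H_i$ realizes
\[
	\iota(Q_{e,V}) = \bigcap_{i=1}^N \overline{W}_{1^r}(q_i, V^{(i)}_\bullet)
\]
set-theoretically.

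For any $p \in C \setminus \{q_1,\ldots,q_N\}$ the inclusion $V \otimes L^* \hookrightarrow \OO_C^{r+s}$ is a fiber isomorphism at $p$, so a full flag of $V(p)$ corresponds (after trivializing $L(p)$) to a full flag of $\CC^{r+s}$. Since $\iota$ is defined by $E \mapsto E \otimes L^*$, the Schubert degeneracy conditions at $p$ for the universal subsheaves match under this fiber identification, giving $\iota^{-1}\overline{W}_{\vec{a}_j}(p_j) = \overline{W}_{\vec{a}_j}(p_j)$ as subvarieties for any $p_j$ distinct from the $q_i$. A dimension count gives $\dim Q_{e+r\ell,C} - rN = \dim Q_{e,V}$; assuming $\bar{\sigma}_{\underline{\vec{a}}}$ is top-dimensional on $Q_{e,V}$ (else both integrals vanish), $\bar{\sigma}_{\underline{\vec{a}}} \cup \bar{\sigma}_{1^r}^N$ is top-dimensional on $Q_{e+r\ell,C}$. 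By Propositions~\ref{p:quot_trivial_props}(c) and \ref{p:quot_general}(c), the corresponding top intersections $W'$ on $Q_{e+r\ell,C}$ and $W$ on $Q_{e,V}$ are finite sets of reduced points lying in $U_{e+r\ell,C}$ and $U_{e,V}$ respectively, and $\iota$ induces a bijection $W \xrightarrow{\sim} W'$. Counting points yields the asserted equality.

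The main concern is ensuring that the flags $V^{(i)}_\bullet$, constrained by $V^{(i)}_{s+r-1} = H_i$, are sufficiently generic for Proposition~\ref{p:quot_trivial_props}(c) to apply to the combined intersection $W'$. Since $\overline{W}_{1^r}(q_i,V^{(i)}_\bullet)$ depends only on the hyperplane $H_i$, the very-general hypothesis on $V$ translates to general choices of the pairs $(q_i, H_i)$; combined with the additional freedom in choosing the flags defining the $\overline{W}_{\vec{a}_j}(p_j)$ at distinct points $p_j$, a Kleiman-type transversality argument then delivers the required proper reduced top intersection.
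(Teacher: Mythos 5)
Your proof is correct and follows the same route as the paper: realizing $V\otimes L^*$ as the kernel of a general elementary modification (Proposition~\ref{p:genbun=genmod}), identifying $\iota(Q_{e,V})$ with the intersection of the $\overline{W}_{1^r}(q_i)$, observing that Schubert conditions at points away from the $q_i$ pull back compatibly under $\iota$, and then invoking Propositions~\ref{p:quot_trivial_props}(c) and \ref{p:quot_general}(c) to reduce both sides to matching counts of reduced points (with both sides vanishing when the degree is off, exactly as you note). Your closing observation --- that $\overline{W}_{1^r}$ depends only on the hyperplane, so a very general elementary modification supplies general enough flags for the properness statements to apply --- is precisely the paper's resolution of the same issue in the proof of Proposition~\ref{p:quot_general}.
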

In particular, Proposition \ref{p:chern_classes} extends to the case when $V$ is general, ensuring that the choice of curve in moduli does not change the intersection theory.

\section{Quot schemes on nodal curves}\label{s:quot_nodal}
To prove the $F(g|d)_m^n$ satisfy the relations of a weighted TQFT, we relate Quot schemes on smooth curves to Quot schemes on nodal curves, where the nodal curve is obtained by degeneration. The first important observation is that Schubert varieties in Quot schemes over a nodal curve can be defined at smooth points $p$ in exactly the same way as they were defined over smooth curves. However, the cohomology class of a Schubert variety now depends on which component of the curve contains $p$.

In this section, we change the previous convention by letting $C$ be a reducible nodal curve with two smooth components $C_1$ and $C_2$ of genus $g_1$ and $g_2$ meeting at a simple node $\nu \in C$. Let  $\iota_i \colon C_i \inj C$ denote the embeddings and let $p_i \in C_i$ denote the points $\iota_i^{-1}(\nu)$ lying over the node.

\subsection{Sheaves on nodal curves}\label{ss:nodal_sheaves}

We review some facts about sheaves on reducible nodal curves described in \cite{Ses82}. If $E$ is a rank $r$ torsion-free sheaf on $C$, then its stalk at the node $\nu$ is of the form $E_\nu \simeq \OO_\nu^{r-a} \oplus m_\nu^a$ for some $0 \le a \le r$, where $m_\nu$ is the ideal sheaf of $\nu$.
\begin{defn} Let $E$ be torsion-free sheaf of rank $r$ on the nodal curve $C$. If $E|_\nu \simeq \OO_\nu^{r-a} \oplus m_\nu^a$, then we say $E$ is \emph{$a$-defective}.
\end{defn}
The following proposition lists some relationships between torsion-free sheaves on $C$ and locally-free sheaves on the components of $C$.

\begin{prop}
\begin{enumerate}[(a)]\label{p:nodalsheaves}
\item Let $E$ be rank $r$ and $a$-defective on $C$. Then $\iota_i^*E \simeq E_i \oplus \CC_{p_i}^a$, where the $E_i$ are vector bundles of rank $r$ on $C_i$ and $\deg E_1 + \deg E_2 = \deg E-a$.
\item If $E_i$ are rank $r$ vector bundles on $C_i$, then ${\iota_1}_* E_1 \oplus {\iota_2}_* E_2$ is $r$-defective on $C$ and $\deg ({\iota_1}_* E_1 \oplus {\iota_2}_* E_2) = \deg E_1 + \deg E_2 + r$.
\item Using the notation in (a), there is a canonical short exact sequence
\[
	0 \to E \to {\iota_1}_* E_1 \oplus {\iota_2}_* E_2 \to E(\nu)/({\iota_1}_* \CC_{p_1}^a \oplus {\iota_2}_* \CC_{p_2}^a) \to 0
\]
in which each map ${\iota_i}_* E_i \to E(\nu)/({\iota_1}_* \CC_{p_1}^a \oplus {\iota_2}_* \CC_{p_2}^a)$ is surjective. The quotient in the sequence is isomorphic to $\CC_\nu^{r-a}$.
\end{enumerate}
\end{prop}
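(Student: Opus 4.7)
The plan is to reduce all three parts to a local analysis at the node, since everything (defectivity, the stalks of the direct sums, the support of the cokernel in (c)) is determined by the behavior at $\nu$. Fix local coordinates $x_i$ on $C_i$ vanishing at $p_i$, so that $R := \OO_{C,\nu} \cong \CC\llbracket x_1, x_2 \rrbracket/(x_1 x_2)$, $\OO_{C_i,p_i} = R/(x_{3-i})$, and $\iota_i^*$ corresponds to $(-) \otimes_R R/(x_{3-i})$. The key $R$-module identity I would establish first is
\[
	m_\nu \;\cong\; R/(x_2) \oplus R/(x_1), \qquad a x_1 + b x_2 \;\longmapsto\; (a \bmod x_2,\, b \bmod x_1).
\]
This map is well-defined and $R$-linear because $x_1 x_2 = 0$, and injectivity and surjectivity follow from the $k$-space decomposition $R = \CC \oplus x_1 \CC\llbracket x_1 \rrbracket \oplus x_2 \CC\llbracket x_2 \rrbracket$.

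Given this, (b) is essentially immediate. Since $E_i$ is locally free of rank $r$ at $p_i$, the stalk $({\iota_i}_* E_i)_\nu$ is $(R/(x_{3-i}))^r$ as an $R$-module, so the direct sum is $(R/(x_2))^r \oplus (R/(x_1))^r \cong m_\nu^r$, exhibiting $r$-defectivity. The degree formula drops out of Euler characteristic additivity combined with $\chi(\OO_C) = 1 - g_1 - g_2$: a brief calculation gives $\deg({\iota_1}_* E_1 \oplus {\iota_2}_* E_2) = \deg E_1 + \deg E_2 + r$.

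For (a), I would tensor the local form $E_\nu \cong R^{r-a} \oplus m_\nu^a$ with $R/(x_2)$. The free summand yields $(R/(x_2))^{r-a}$, while the key isomorphism together with $R/(x_1) \otimes_R R/(x_2) = R/m_\nu = \CC$ gives $m_\nu \otimes_R R/(x_2) \cong \OO_{C_1,p_1} \oplus \CC_{p_1}$. Hence $\iota_1^* E \cong \OO_{C_1,p_1}^r \oplus \CC_{p_1}^a$ at $p_1$ and is locally free of rank $r$ elsewhere. Its torsion subsheaf is $\CC_{p_1}^a$, and the quotient $E_1$ is torsion-free on a smooth curve, hence locally free of rank $r$. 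The extension $0 \to \CC_{p_1}^a \to \iota_1^* E \to E_1 \to 0$ splits because $\Ext^1_{C_1}(E_1, \CC_{p_1}^a) = H^1(E_1^{\vee} \otimes \CC_{p_1}^a) = 0$ (higher cohomology of a skyscraper vanishes). I would defer the degree assertion in (a) until after (c).

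For (c), define $\alpha \colon E \to {\iota_1}_* E_1 \oplus {\iota_2}_* E_2$ by composing the adjunction units $E \to {\iota_i}_* \iota_i^* E$ with the quotient maps from (a). Injectivity follows because $\alpha$ is an isomorphism away from $\nu$ and $E$ is torsion-free. For the cokernel, I would compute $\alpha$ on stalks at $\nu$: via the identifications above, the $R^{r-a}$ summand maps diagonally into the first $r-a$ coordinates of $(R/(x_2))^r \oplus (R/(x_1))^r$ via $r \mapsto (r \bmod x_2, r \bmod x_1)$, with image $\{(P,Q) : P(0) = Q(0)\}^{r-a}$ and cokernel $\CC^{r-a}$, while the $m_\nu^a$ summand maps isomorphically onto the remaining $a$ coordinates by the key lemma and contributes no cokernel. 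Thus the cokernel is $\CC_\nu^{r-a}$, and surjectivity of each ${\iota_i}_* E_i \to \CC_\nu^{r-a}$ is visible from the same local picture (it amounts to evaluating $r-a$ coordinates at $p_i$). Taking Euler characteristics in the resulting sequence and using (b) then yields $\deg E_1 + \deg E_2 = \deg E - a$, completing (a). The only real obstacle is keeping the non-free $R$-module structure of $m_\nu$ straight; once the identification $m_\nu \cong R/(x_2) \oplus R/(x_1)$ is established, the rest is routine local computation and Euler characteristic bookkeeping.
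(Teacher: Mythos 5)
Your proof is correct, and while it shares the same essential ingredient as the paper (the structure of the maximal ideal at the node, which the paper phrases globally as $\iota_i^* m_\nu = m_{p_i} \oplus \CC_{p_i}$ and ${\iota_1}_* m_{p_1} \oplus {\iota_2}_* m_{p_2} = m_\nu$, and which your key isomorphism $m_\nu \cong R/(x_2) \oplus R/(x_1)$ expresses in local-ring form), the execution of (a) and (c) is genuinely different. For (a), the paper derives the degree formula directly via a Seshadri-style argument with a line bundle whose section does not vanish at $\nu$, whereas you obtain the splitting $\iota_i^* E \cong E_i \oplus \CC_{p_i}^a$ by an $\Ext$-vanishing argument ($\Ext^1_{C_i}(E_1, \CC_{p_i}^a)=0$ since $E_1$ is locally free and skyscrapers have no $H^1$) and postpone the degree count. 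For (c), the paper runs a commutative diagram: map $E \to {\iota_1}_* \iota_1^* E \oplus {\iota_2}_* \iota_2^* E$ with cokernel $\CC_\nu^{r+a}$ (identified by degree), compare against fibers to see the cokernel is $E(\nu)$, and remove torsion $\CC_\nu^{2a}$ from the middle; you instead compute the map directly on stalks and read off the cokernel. Both are valid; your route is more self-contained and computational, while the paper's diagram chase delivers the canonical description of the quotient as $E(\nu)/({\iota_1}_* \CC_{p_1}^a \oplus {\iota_2}_* \CC_{p_2}^a)$ with less bookkeeping. That description is the one aspect your local stalk computation identifies only up to abstract isomorphism (you show the cokernel is $\CC_\nu^{r-a}$ but do not explicitly exhibit it as a quotient of $E(\nu)$); if you want to match the statement exactly, observe that the cokernel is killed by $m_\nu$, so the composite $E \to {\iota_1}_* E_1 \oplus {\iota_2}_* E_2 \to \mathrm{coker}$ factors through $E(\nu)$, and your stalk computation shows the kernel of the induced surjection $E(\nu) \to \mathrm{coker}$ is precisely ${\iota_1}_* \CC_{p_1}^a \oplus {\iota_2}_* \CC_{p_2}^a$. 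Finally, your Euler-characteristic accounting (using $\chi(\OO_C) = 1 - g_1 - g_2$) correctly recovers both degree formulas, and there is no circularity in deferring the degree claim of (a) until after (c), since (c) only uses the splitting from (a), not the degree.
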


\begin{proof}
(a): We argue as in \cite{Ses82}. Let $L$ be a line bundle on $C$ with a section $\OO_C \to L$ that does not vanish at $\nu$, namely the cokernel is supported away from $\nu$. Pulling back along the $\iota_i$ yields sections $\OO_{C_i} \to \iota_i^* L$ whose cokernels partition the original cokernel. Thus $\deg \iota_1^* L + \deg \iota_2^* L = \deg L$. Moreover, a local analysis shows that $\iota_i^* m_\nu = m_{p_i} \oplus \CC_{p_i}$, so $\deg m_{\nu} = \deg m_{p_1} + \deg m_{p_2} + 1$. These facts can be used to deduce the claim.

(b): The key observation is ${\iota_1}_* m_{p_1} \oplus {\iota_2}_* m_{p_2} = m_{\nu}$. The degree statement is true even if $E_i$ are general coherent sheaves because the length of torsion is preserved by ${\iota_i}_*$.

(c) As is clear on the level of modules, there are canonical maps $E \to {\iota_1}_* \iota_1^* E \oplus {\iota_2}_* \iota_2^* E$
that are isomorphisms away from $\nu$.
The functors ${\iota_i}_* \iota_i^*$ and these canonical maps yield a commutative diagram
\[\xymatrix{
	0 \ar[r] & E \ar@{->>}[d]^f \ar[r] & {\iota_1}_* \iota_1^* E \oplus {\iota_2}_* \iota_2^* E \ar@{->>}[d]^-{({\iota_i}_* \iota_i^* f)} \ar[r] & \CC_\nu^{r+a} \ar[d] \ar[r] & 0 \\
	0 \ar[r] & E(\nu) \ar[r]  & {\iota_1}_* \iota_1^* E(\nu) \oplus {\iota_2}_* \iota_2^* E(\nu) \ar[r] & E(\nu) \ar[r] & 0
}\]
in which the first map in the first row is an inclusion since it is an isomorphism away from $\nu$ and $E$ is torsion-free. We identify the cokernel in the first row by its degree. The first map in the bottom row is diagonal inclusion $E(\nu) \to E(\nu) \oplus E(\nu)$. The right vertical map must be a surjection, hence an isomorphism. Removing torsion in the pullbacks yields a map $E \to {\iota_1}_* E_1 \oplus {\iota_2}_* E_2$ that is also injective (for degree reasons), which produces the desired sequence. 
The surjectivity claim follows from the commutativity of the right side of the diagram and the fact that in the bottom row, each $E(\nu)$-summand in the middle surjects onto the quotient.
\end{proof}

Let $V$ be a vector bundle of rank $r+s$ on $C$. Letting $V_i$ denote $\iota_i^* V$, the sequence (c) in the proposition is
\[
	0 \to V \to {\iota_1}_* V_1 \oplus {\iota_2}_* V_2 \to V(\nu) \to 0.
\]
The maps ${\iota_i}_* V_i \surj V(\nu)$ are push-forwards of the quotients $V_i \surj V_i(p_i)$. If we think of the sequence in reverse, starting with $V_1$ and $V_2$, then we are constructing a vector bundle $V$ on the nodal curve by gluing $V_1$ and $V_2$ along the node via a choice of isomorphism $V_1(p_1) \simeq V_2(p_2)$ of their fibers over the node. We can summarize this as a bijection
\[
	\left\{\, \parbox{11em}{\centering vector bundles $V$ on $C$ of rank $r+s$ and degree $d$} \,\right\} \leftrightarrow \left\{\, \parbox{18em}{\centering vector bundles $V_i$ on $C_i$ of rank $r+s$ and degree $d_i$ satisfying $d_1 + d_2 = d$, together with an isomorphism $V_1(p_1) \simeq V_2(p_2)$} \,\right\}.
\]
The map from left to right is $V \mapsto (\iota_1^* V, \iota_2^* V)$ together with the canonical isomorphisms $(\iota_1^* V)(p_1) \simeq V(\nu) \simeq (\iota_2^* V)(p_2)$ coming from the fact that fibers are defined as a pullback, hence are preserved under pullback. The map from right to left is $(V_1,V_2) \mapsto \ker ({\iota_1}_* V_1 \oplus {\iota_2}_* V_2 \surj {\iota_2}_* V_2(p_2))$, where the map ${\iota_1}_* V_1 \to {\iota_2}_* V_2(p_2)$ is the composition of the natural surjection onto the fiber composed with the isomorphism $V_1(p_1) \simeq V_2(p_2)$.

Given a short exact sequence $0 \to E \to V \to F \to 0$ in $Q_{e,V}$, where $E$ is torsion-free but $F$ may have torsion, the natural pull-push maps as in (c) in the proposition fit in a commutative diagram
\[\xymatrix{
	0 \ar[r] & E \ar@{^(->}[d] \ar[r] & V \ar@{^(->}[d]\ar[r] & F \ar[d] \ar[r] & 0 \\
	0 \ar[r] & E_1 \oplus E_2 \ar[r] & V_1 \oplus V_2 \ar[r] & F_1 \oplus F_2 \ar[r] & 0
}\]
in which the rows are exact, $F_i = \iota_i^* F$, $E_i = \iota_i^* E/\text{torsion}$, and the push-forward notation is suppressed in the second row. The snake lemma long exact sequence is of the form
\[
	0 \to \CC_\nu^b \to \CC_\nu^{r-a} \to \CC_\nu^{r+s} \to \CC_\nu^{s+a+b } \to 0
\]
for some $a \ge 0$ and $0 \le b \le r-a$ that can be interpreted as follows: $E$ is $a$-defective and the rank of $E(\nu) \to V(\nu)$ is $r-a-b$. Note that $0 \to E_i \to V_i \to F_i \to 0$ are short exact sequences on $C_i$ and that $\deg E_1 + \deg E_2 = \deg E - a$.

\subsection{Structure of Quot schemes on nodal curves}

To control intersections of Schubert varieties in Quot schemes $Q_{e,V} = \Quot\big( (r,-e),V \big)$ over the nodal curve $C$, we relate $Q_{e,V}$ to Quot schemes $Q_{e_i,V_i}$ over the components $C_i$.

For all $0 \le a \le r$, let $Z_{a,\nu,e}$ denote the locally-closed subscheme in $Q_{e,V}$ consisting of sequences $0 \to E \to V \to F \to 0$ where $E$ is $a$-defective. As in \S\ref{ss:boundary}, let $Z_{a,p_i}$ denote the locally-closed subscheme in $Q_{e_i,V_i} = \Quot\big( (r,-e_i),V_i \big)$ where the subsheaf drops rank by exactly $a$ at $p_i$. Setting $G_a = \Gr\big(r-a,V_1(p_1)\big)$ (which is identified with $\Gr(r-a,V_2(p_2))$, there is an evaluation map
\[
	Z_{a,p_1} \times Z_{a,p_2} \to G_a \times G_a
\]
and we let $\Delta_a$ denote the preimage of the diagonal.

\begin{lem} \label{p:cover} Let $a \ge 0$, $e_1 + e_2 = e + a$, and $0 < b \le r-a$.
\begin{enumerate}[(a)]
\item There is an embedding
\[
	\phi_a \colon \Delta_a \inj Z_{a,\nu,e}
\]
whose image is exactly those $[E \subset V]$ where $E$ is $a$-defective, the map $E(\nu) \to V(\nu)$ has rank exactly $r-a$, and the pullbacks $E_i = (\iota_i^*E)/\text{torsion}$ have degree $e_i$.

\item Let $\mathcal{E}|_\nu$ denote the universal subsheaf restricted to $\{\nu\} \times Z_{a+b,\nu,e-b}$. Let $U_b \subset \Gr(\mathcal{E}|_\nu,b)$ denote the open subscheme of elementary modifications that produce an $a$-defective kernel. There is a map
\[\xymatrix{
	U_{b} \ar[d] \ar[r]^-{\beta_{a,b}} & Z_{a,\nu,e} \\
	Z_{a+b,\nu,e-b}
}\]
whose image contains all $0 \to E \to V \to F \to 0$ where $E$ is $a$-defective and $E(\nu) \to V(\nu)$ has rank $r-a-b$.

\item For each $a$, as $b>0$ and the partition $e_1+e_2$ of  $e+a$ vary, the images of the maps $\phi_a$ and $\beta_{a,b}$ cover $Z_{a,\nu,e}$.
\end{enumerate}
\end{lem}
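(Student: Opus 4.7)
The approach is to construct $\phi_a$ as a fiber-product gluing and $\beta_{a,b}$ as a restriction of the elementary modification map from \S\ref{ss:boundary}, then establish the image descriptions by local analysis at $\nu$.

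For part (a), given $(E_1, E_2) \in \Delta_a$ with common image $W \subset V(\nu)$ of dimension $r-a$ under the identification $V_1(p_1) \simeq V_2(p_2) \simeq V(\nu)$, define $E$ as the fiber product
\[
E := V \times_{{\iota_1}_* V_1 \oplus {\iota_2}_* V_2} ({\iota_1}_* E_1 \oplus {\iota_2}_* E_2),
\]
which fits into a short exact sequence $0 \to E \to {\iota_1}_* E_1 \oplus {\iota_2}_* E_2 \to \CC_\nu^{r-a} \to 0$ of the form in Proposition \ref{p:nodalsheaves}(c). A Riemann-Roch computation yields $\deg E = -e$, and a local analysis at $\nu$ shows that $E$ is $a$-defective with $E(\nu) \to V(\nu)$ of rank exactly $r-a$. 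The inverse of $\phi_a$ on its image sends $E \mapsto (\iota_1^*E/\mathrm{torsion}, \iota_2^*E/\mathrm{torsion})$ with the common image extracted via Proposition \ref{p:nodalsheaves}(a); the fiber-product construction extends to universal families over $\Delta_a$, giving a morphism of schemes.

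For part (b), the map $\beta_{a,b}$ is a restriction of the elementary modification map $\beta_b \colon \Gr(\mathcal{E}_{e-b}, b) \to Q_{e,V}$ from \S\ref{ss:boundary}: restrict the base $C \times Q_{e-b,V}$ to $\{\nu\} \times Z_{a+b,\nu,e-b}$ and take $U_b$ to be the open subscheme of the resulting Grassmann bundle where the kernel of the induced modification is exactly $a$-defective (rather than more singular). A point of $U_b$ is a pair $(E^{\mathrm{start}}, [E^{\mathrm{start}}(\nu) \surj \CC^b])$ with $E^{\mathrm{start}} \in Z_{a+b,\nu,e-b}$; the map $\beta_{a,b}$ sends this to $E := \ker(E^{\mathrm{start}} \surj \CC_\nu^b)$, which has degree $-e$ and lies in $Z_{a,\nu,e}$ by the openness condition. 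The image claim is then verified as part of (c).

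For part (c), given $[E \subset V] \in Z_{a,\nu,e}$, set $b := (r-a) - \mathrm{rank}(E(\nu) \to V(\nu)) \ge 0$. If $b=0$, then $E_i := \iota_i^*E/\mathrm{torsion}$ have matching images in $V(\nu)$ by Proposition \ref{p:nodalsheaves}(a), yielding a point of $\Delta_a$ whose image under $\phi_a$ recovers $E$. If $b > 0$, a local analysis at the node constructs an $(a+b)$-defective enlargement $E^{\mathrm{start}} \supset E$ of degree $-e+b$ together with a canonical quotient $E^{\mathrm{start}}(\nu) \surj \CC^b$ satisfying $E = \ker(E^{\mathrm{start}} \surj \CC_\nu^b)$: the $b$-dimensional subspace of $\ker(E(\nu) \to V(\nu))$ coming from the locally-free part of $E_\nu$ is used to adjoin new generators in $V_\nu$, matched consistently across the two branches at $\nu$. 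The main obstacle is this local analysis at the nodal point in parts (a) and (c), which reduces to explicit computations with torsion-free modules over $\OO_{C,\nu} \simeq \CC[\![x,y]\!]/(xy)$ of the classified form $\OO_\nu^{r-a} \oplus m_\nu^a$, using Proposition \ref{p:nodalsheaves} throughout to translate between sheaf data on $C$ and data on the components.
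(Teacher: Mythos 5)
Your proposal is correct and takes essentially the same approach as the paper: $\phi_a$ is constructed by gluing subsheaves of $V_1$ and $V_2$ across the node (your fiber-product description is equivalent to the paper's definition of $E$ as the kernel of ${\iota_1}_* E_1 \oplus {\iota_2}_* E_2 \to \CC_\nu^{r-a}$ via Proposition \ref{p:nodalsheaves}(c)), $\beta_{a,b}$ is a restriction of the elementary modification map from \S\ref{ss:boundary}, and (c) reduces to (a) when $b=0$ and to the image claim in (b) when $b>0$. The one stylistic difference is that you produce the $(a+b)$-defective enlargement $E^{\mathrm{start}}$ by a local generator-adjoining argument at $\nu$, whereas the paper obtains it as the kernel of ${\iota_1}_*E_1 \oplus {\iota_2}_*E_2 \to \CC_\nu^{r-a-b}$ from a global snake-lemma diagram; both require the same local picture over $\OO_{C,\nu}$.
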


\begin{proof}
(a): Set-theoretically, define $\phi_a$ as follows. A point in $\Delta_a$ is a pair of inclusions $E_i \subset V_i$ whose restrictions to fibers at $p_i$ have rank $r-a$ and the same image under the identification $V_1(p_1) \simeq V_2(p_2)$. Thus there is a canonical sequence
\[
	0 \to E \to {\iota_1}_* E_1 \oplus {\iota_2}_* E_2 \to \CC_\nu^{r-a} \to 0
\]
producing a sheaf $E$ on $C$ that is $a$-defective. Moreover, the image of $E(\nu) \to V(\nu)$ coincides with the image of $E_i(p_i) \to V_i(p_i)$ after identifying $V_i(p_i)$ with $V(\nu)$.

To construct this morphism algebraically, we construct the same sequence in families. First, we construct vector bundles $\widehat{\mathcal{E}}_i$ on $C \times \Delta_a$ as follows: restrict the universal subsheaves $\mathcal{E}_i \subset \pi_{C_i}^* V_i$ on $C_i \times Q_{e_i,V_i}$ to $C_i \times Z_{a,p_i}$, push forward under the inclusion $\iota_i \times \mathrm{id}$ to get a sheaf on $C \times Z_{a,p_i}$, pull back to $C \times Z_{a,p_1} \times Z_{a,p_2}$ under the projection map, and finally restrict to $C \times \Delta_a$. There is a morphism
\[
	\widehat{\mathcal{E}}_1 \oplus \widehat{\mathcal{E}}_2 \to \pi_C^* V_1(p_1)
\]
whose kernel $\mathcal{E}$ is a flat family of $a$-defective sheaves on $C$ of degree $e$, hence yielding a map to $Q_{e,V}$ whose image is contained in $Z_{a,\nu,e}$.

Similarly, to get the inverse, we pull back the universal subsheaf $\mathcal{E} \to \pi^* V$ on $C \times \im(\phi_a)$ to $C_i \times \im(\phi_a)$ and remove the torsion in $\iota_i^* \mathcal{E}$. Then $\iota_i^* \mathcal{E}/\text{torsion}$ is a flat family of sheaves of degree $e_i$, hence defines a map $\im(\phi_a) \to Z_{a,p_i}$. The image of the product of these maps is contained in $\Delta_a$ since the restrictions to $p_i$ of the inclusions $\iota_i^* \mathcal{E}/\text{torsion} \inj \pi^* V_i$ have image coinciding with the image of $\mathcal{E}|_\nu \to \pi^* V(\nu)$ under the identification $V_i(p_i) \simeq V(\nu)$.

(b): The open subscheme $U_b$ of the Grassmann bundle parametrizes elementary modifications $0 \to E \to E' \to \CC_\nu^b \to 0$. Since $E'$ is $(a+b)$-defective and the elementary modification is general, a local computation shows that $E$, the image under $\beta_{a,b}$, is $a$-defective. Moreover, $E(\nu) \to V(\nu)$ factors through $E'(\nu) \to V(\nu)$, which has rank $\le r - a - b$.

Conversely, starting with $0 \to E \to V \to F \to 0$ in which $E$ is $a$-defective and $E(\nu) \to V(\nu)$ has rank exactly $r-a-b$, by the discussion at the end of \S\ref{ss:nodal_sheaves}, there is a commutative diagram
\[\xymatrix{
	0 \ar[r] & E \ar@{^(->}[d] \ar[r] & V \ar@{^(->}[d] \ar[r] & F \ar[d] \ar[r] & 0 \\
	0 \ar[r] & E_1 \oplus E_2 \ar[r] & V_1 \oplus V_2 \ar[r] & F_1 \oplus F_2 \ar[r] & 0
}\]
and the snake lemma long exact sequence is $0 \to \CC_\nu^b \to \CC_\nu^{r-a} \to \CC_\nu^{r+s} \to \CC_\nu^{s+a+b} \to 0$. The rank of the map of fibers $E_i(p_i) \to V_i(p_i)$ is only $r-a-b$, so we can pass to the diagram
\[\xymatrix{
	0 \ar[r] & E_1 \oplus E_2 \ar@{->>}[d] \ar[r] & V_1 \oplus V_2 \ar@{->>}[d] \ar[r] & F_1 \oplus F_2 \ar@{->>}[d] \ar[r] & 0 \\
	0 \ar[r] & \CC_\nu^{r-a-b} \ar[r] & \CC_\nu^{r+s} \ar[r] & \CC_\nu^{s+a+b} \ar[r] & 0
}\]
and in the short exact sequence of kernels $0 \to E' \to V \to F' \to 0$ the sheaf $E'$ is $(a+b)$-defective. Moreover, there is an induced short exact sequence $0 \to E \to E' \to \CC_\nu^b \to 0$, so $0 \to E \to V \to F \to 0$ is in the image of $\beta_{a,b}$.

Now we construct $\beta_{a,b}$. The restriction of the universal sequence to $C \times Z_{a+b,\nu,e-b}$ yields $0 \to \mathcal{E} \to \pi_C^* V \to \mathcal{F} \to 0$ in which each fiber of $\mathcal{E}$ over $Z_{a+b,\nu,e-b}$ is $(a+b)$-defective, hence $\mathcal{E}|_\nu$ is a vector bundle of rank $r+a+b$ on $Z_{a+b,\nu,e-b}$. Pushing forward the universal sequence $0 \to \mathcal{S} \to \pi^* \mathcal{E}|_\nu \to \mathcal{Q} \to 0$ on $\Gr(\mathcal{E}|_\nu,b)$ under the inclusion $\Gr(\mathcal{E}|_\nu,b) \to C \times \Gr(\mathcal{E}|_\nu,b)$ at the point $\nu$, we get a map $\pi^* \mathcal{E} \surj \mathcal{Q}$ on $C \times \Gr(\mathcal{E}|_\nu,b)$ whose kernel $\mathcal{K}$ is a flat family of degree $e$ sheaves on $C$. The embedding $\mathcal{K} \inj \pi^* \mathcal{E} \inj \pi^* V$ induces the map $\beta_{a,b} \colon \Gr(\mathcal{E}|_\nu,b) \to Q_{e,V}$. Restricting to $U_b$ ensures that the image is contained in $Z_{a,\nu,e}$.

(c): Let $x=[0 \to E \to V \to F \to 0]$ be a short exact sequence in which $E$ is $a$-defective and $E(\nu) \to V(\nu)$ has rank $r-a-b$. If $b > 0$, then  $x$ is in the image of $\phi_a$ (for $e_i = \deg E_i$). If $b > 0$, then $x$ is in the image of $\beta_{a,b}$.
\end{proof}

\begin{rmk} In the case when the $V_i$ are very general and the isomorphism $V_1(p_1) \simeq V_2(p_2)$ defining $V$ is sufficiently general to ensure that $\Delta_0$ always has the expected dimension, it follows from (a) that for fixed $e$, each partition $e = e_1 + e_2$ for which $\Delta_a$ is nonempty yields a component of $Q_{e,V}$ of the expected dimension. By (c) and a dimension count, we see that any other component of $Q_{e,V}$ must have smaller dimension. Thus, although, $Q_{e,V}$ is in general not irreducible, each of its top-dimensional components is the pullback of the diagonal in a product $U_{e_1,V_1} \times U_{e_2,V_2}$ for some partition $e_1 + e_2 = e$. In particular, when $V$ is trivial, we see that each component of $\Mor_e(C,G)$ is the pullback of the diagonal in $\Mor_{e_1}(C_1,G) \times \Mor_{e_2}(C_2,G)$, namely a morphism $C \to G$ of degree $e$ corresponds to a pair of morphisms $C_i \to G$ of degrees summing to $e$ such that the points $p_i$ have the same image in $G$.
\end{rmk}

\subsection{Intersections of Schubert varieties are proper}

Using the maps described in the previous subsection, we study intersections of Schubert varieties in $Q_{e,V}$ by controlling their preimages. We can use induction for the maps $\beta_{a,b}$, so the main tool we still need is a properness statement for intersections of Schubert varieties in $\Delta_a \subset Z_{a,p_1} \times Z_{a,p_2}$. In the following lemma, since $0 \le a \le r$ and the $p_i$ are fixed, we drop $a$, $p_1$, and $p_2$ from the notation, so we write $Z_{e_i}:=Z_{a,p_i}$ since we do need to keep track of the degrees $e_i$.

\begin{lem} Suppose $\Delta \subset Z_{e_1} \times Z_{e_2}$ is empty or has pure codimension $(r-a)(s+a)$ for all $e_1,e_2$. Then intersections $W$ of general Schubert varieties on $Q_{e_1,V_1} \times Q_{e_2,V_2}$ are proper on $\Delta$. Moreover, if $\mathcal{U} \subset Z_{e_1} \times Z_{e_2}$ denotes the open locus where the quotients are torsion-free away from $p_1$ and $p_2$, then $W \cap \Delta \cap \mathcal{U}$ is dense in $W \cap \Delta$. In particular, top intersections $W$ (where the length of the partitions equals $\dim \Delta$) are finite, reduced, and contained in $\mathcal{U} \cap \Delta$.
\end{lem}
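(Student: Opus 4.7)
The plan is to adapt Strategy~\ref{strategy:proper} to the present setting, using the hypothesis on $\Delta$ and inducting on $e_1 + e_2$. Let $\mathcal{U} \cap \Delta$ denote the open subscheme parametrizing pairs of sequences whose quotients are torsion-free away from $p_1, p_2$. On this locus, for every point $q \in C_i \setminus \{p_i\}$ at which a Schubert variety is based, there is a morphism $\mathcal{U} \cap \Delta \to G$ obtained by composing projection onto the relevant factor with the evaluation map described in Remark~\ref{r:degeneracy_locus}. Since the hypothesis ensures $\mathcal{U} \cap \Delta$ is equidimensional of the expected dimension on each component, applying Lemma~\ref{l:morphism_proper} component by component gives that general Schubert intersections are proper on $\mathcal{U} \cap \Delta$.

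Next I would stratify the boundary $\Delta \setminus \mathcal{U}$ by the amount and location of the additional torsion of the quotients at points other than $p_1, p_2$. Each stratum arises by applying the Grassmann bundle construction $\beta_\ell$ of \S\ref{ss:boundary} to one factor at a time, intersected with the diagonal condition over $G_a \times G_a$; its codimension is controlled by~(\ref{eq:grassmann_strat_dim}). Inducting on $e_1+e_2$ (with base case the situation $\mathcal{U} \cap \Delta = \Delta$, when there is no room for extra torsion), Corollary~\ref{c:types} splits the preimage of $W$ under each $\beta_\ell$ into Type~1 components that are handled by the inductive hypothesis (which applies because the hypothesis on $\Delta$ is postulated for all $e_1, e_2$) and Type~2 components carrying an extra degeneracy locus $\hat{W}_{\vec{b}}(q)$ that is handled by Lemma~\ref{l:morphism_proper} via the structure map of Remark~\ref{r:degeneracy_locus}. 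The bookkeeping closely parallels inequality~$(\dagger)$ in the proof of Lemma~\ref{l:proper_trivial}.

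For the remaining claims, denseness of $W \cap \Delta \cap \mathcal{U}$ in $W \cap \Delta$ follows from the stratification, since every stratum in $\Delta \setminus \mathcal{U}$ has strictly smaller dimension than $\Delta$ and $W$ meets it in codimension at least $|A|$. For a top intersection, properness forces $W \cap \Delta$ to be zero-dimensional, and avoiding the positive-codimension strata of $\Delta \setminus \mathcal{U}$ forces it into $\mathcal{U} \cap \Delta$. Reducedness then follows from a Bertini-type argument: on the generically smooth locus $\mathcal{U} \cap \Delta$, the pullbacks of general Schubert cycles under the evaluation maps to Grassmannians cut transversely away from the branch locus. The main obstacle is the inductive bookkeeping in the boundary step, where one must simultaneously track the defect $a$ at the node, the degrees $e_1, e_2$ (which decrease under the elementary modifications defining the strata), and the rank-drops at the extra points, ensuring that the total failure on every stratum is bounded by its codimension in $\Delta$.
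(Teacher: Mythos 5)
Your proposal follows essentially the same strategy as the paper's proof: prove properness on $\mathcal{U}\cap\Delta$ via the evaluation morphisms and Lemma~\ref{l:morphism_proper}, stratify the boundary $\Delta\setminus\mathcal{U}$ by the extra torsion at points other than $p_1,p_2$ using the Grassmann bundle maps $\beta_{1,0}\sqcup\beta_{0,1}$ applied one factor at a time, induct on $e_1+e_2$, split into Type~1/Type~2 components via Corollary~\ref{c:types}, and carry out the codimension bookkeeping as in Lemma~\ref{l:proper_trivial}. The one place you are imprecise is the source of the primary codimension estimate: equation~(\ref{eq:grassmann_strat_dim}) governs the \emph{secondary} stratification of a fiber $\Gr(1,q_k)$ inside the Type~2 analysis, whereas the codimension $s$ of the primary strata $\Delta_{1,0},\Delta_{0,1}$ in $\Delta$ is obtained differently (the paper exhibits a commutative square exhibiting $\Delta_{1,0}$ as a pullback of the diagonal over $(C\setminus p)\times Z_{e_1-1}\times Z_{e_2}$, invoking the hypothesis on $\Delta$ at the lower degree to get pure codimension $(r-a)(s+a)$ there); your appeal to ``the hypothesis is postulated for all $e_1,e_2$'' points to the right input but skips the step that transfers it to the Grassmann bundle. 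This is a gap in exposition rather than in approach, and the rest of your outline — denseness via the stratification, top intersections forced into $\mathcal{U}\cap\Delta$, reducedness by a Bertini/transversality argument on the generically smooth locus — matches the paper.
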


\begin{proof} The proof adapts Strategy \ref{strategy:proper} to this more complicated setting. Let $W$ be the intersection of $\overline{W}_{\vec{a}_1}(q_1),\dots,\overline{W}_{\vec{a}_{k}}(q_k)$ and $\overline{W}_{\vec{b}_1}(q_1'),\dots,\overline{W}_{\vec{b}_m}(q_m')$, where $q_1,\dots,q_k \in (C_1 \setminus p_1)$ and $q_1',\dots,q_m' \in (C_2 \setminus p_2)$ are distinct points. Set $A = \sum_{i=1}^k |\vec{a}_i| + \sum_{j=1}^m |\vec{b}_i|$. Let $\mathcal{B}$ denote the complement of $\mathcal{U}$ in $Z_{e_1} \times Z_{e_2}$. Since the Schubert varieties on $\Delta$ are still defined as degeneracy loci of the restriction of the relevant bundles on $Z_{e_1} \times Z_{e_2}$, their codimension in $\Delta$ cannot be strictly larger than $A$. Thus it suffices to prove that
\begin{enumerate}[(1)]
\item $W \cap \Delta \cap \mathcal{U}$ is empty or has pure codimension $A$ in $\Delta$;
\item $W \cap \Delta \cap \mathcal{B}$ is empty or has codimension $> A$ in $\Delta$.
\end{enumerate}

(1): As usual, there is an evaluation morphism $\mathrm{ev}_{q_1,\dots,q_k,q_1',\dots,q_m'} \colon \mathcal{U} \to G^{k+m}$. Restricting the domain to $\Delta \cap \mathcal{U}$ and using Lemma \ref{l:morphism_proper} gives the result.

(2): The boundary $\mathcal{B}$ is the image of the map
\[
	\beta_{1,0} \sqcup \beta_{0,1} \colon \big(\Gr(\mathcal{E}_{e_1-1},1) \times Z_{e_2} \big) \sqcup \big(Z_{e_1} \times \Gr(\mathcal{E}_{e_2-1},1)\big) \to Z_{e_1} \times Z_{e_2},
\]
where the definition of Grassmann bundles over $Z_{e_1-\ell}$ is as before, except that we restrict the elementary modification to a point of $C_1 \setminus p_1$, thus obtaining a recursive structure on the $Z_{e_1-\ell}$ for varying $\ell$ (and similar for $Z_{e_2-\ell}$).

We pull back $\Delta$ under $\beta_{1,0}$ and $\beta_{0,1}$ to get closed subvarieties $\Delta_{1,0}$ and $\Delta_{0,1}$ in the Grassmann bundles. As usual, it suffices to prove that the pullback of $W$ has codimension
\[
	> A - (\dim \Delta - \dim \Delta_{1,0})
\]
in $\Delta_{1,0}$ (then, by symmetry, we also get codimension $> A - (\dim \Delta - \dim \Delta_{0,1})$ in $\Delta_{0,1}$). Since the elementary modifications in $\Gr(\mathcal{E}_{e_1-1},1)$ occur at points other than $p_1$, there is a commutative diagram
\[\xymatrix{
	\Gr(\mathcal{E}_{e_1-1},1) \times Z_{e_2} \ar[d]_{\pi_{1,0}} \ar[r]^{\beta_{1,0}} & Z_{e_1} \times Z_{e_2} \ar[d]^{\ev_{p_1,p_2}} \\
	(C \setminus p) \times Z_{e_1-1} \times Z_{e_2} \ar[r]_-{\ev_{p_1,p_2}'} & \Gr(a,\CC^{r+s}) \times \Gr(a,\CC^{r+s})
}\]
so $\Delta_{1,0}$ can be obtained by pulling back the diagonal under $\ev'_{p_1,p_2}$ and then $\pi_{1,0}$. By assumption, the pullback under $\ev'_{p_1,p_2}$ is empty or has codimension $(r-a)(s+a)$, hence we get that same codimension in the Grassmann bundle. Thus
\[
	\dim \Delta - \dim \Delta_{1,0} = \dim (Z_{e_1} \times Z_{e_2}) - \dim (\Gr(\mathcal{E}_{e_1-1},1) \times Z_{e_2}) = s.
\]

We prove that the intersection $W \cap \Delta_{1,0}$ is empty or has codimension $> A - s$ in $\Delta_{1,0}$ by induction on $e_1 + e_2$. The base case is trivial since $Z_{e_1} \times Z_{e_2}$ is empty when $e_1 + e_2$ is sufficiently small. For the inductive step, we may assume there is at least one Schubert variety since otherwise there is nothing to prove. We can immediately deal with the case $k=0$ and $m > 0$ since by the inductive hypothesis the Schubert varieties impose the right codimension on $Z_{e_1-1} \times Z_{e_2}$, hence also on $\Gr(\mathcal{E}_{e_1-1},1) \times Z_{e_2}$.

The remaining case is $k > 0$. As usual, the intersection on $\Gr(\mathcal{E}_{e_1-1},1)$ decomposes into two types of components. The Type 1 component reduces to an intersection on the base $Z_{e_1-1}$, where we are done by induction. Up to relabeling of the $q_i$, each Type 2 component is supported on $\Gr(1,q_k)=\pi_1^{-1}(\{q_k\} \times Z_{e_1-1})$ and includes the degeneracy locus $\hat{W}_{(\vec{a}_k)_{\ell+1},\dots, (\vec{a}_k)_r}(q_k)$. As described in Corollary{c:stratification}, there is a stratification
\[
	\Gr(1,q_k) = \bigsqcup_{1 \le \ell \le r} \beta_{1,q_k}^{-1}(Z_{\ell,q_k}),
\]
where $U_{\ell,q_k}$ is the open subscheme of the Grassmann bundle $\Gr(\ell,q_k)$ over $Z_{e_1-\ell}$ and $Z_{\ell,q_k} = \beta_{\ell,q_k}(U_{\ell,q_k}) \subset Z_{e_1}$. The pullback of $\Delta$ (which we also write as $\Delta$) intersects each $U_{\ell,q_k} \times Z_{e_2}$ properly since it is proper on the base $Z_{e_1-\ell} \times Z_2$. The preimage of $\beta_{1,q_k}$ over $Z_{\ell,q_k}$ has fibers of dimension $\ell-1$. There are evaluation maps $\mathrm{ev}_{q_k} \colon U_{\ell,q_k} \to \Gr\big(r-\ell,V_1(q_k)\big)$ yielding maps
\[
	U_{\ell,p_k} \times Z_{e_2} \to \Gr\big(r-\ell,V_1(q_k)\big).
\]
The inductive assumption ensures that the intersection of $\Delta$ and the Schubert varieties is proper in $Z_{e_1-\ell} \times Z_{e_2}$, hence also in $U_{\ell,q_k} \times Z_{e_2}$. A general choice of $W_{\vec{a}_k}$ in $G$ ensures that each $W_{(\vec{a}_k)_{\ell+1},\dots, (\vec{a}_k)_r}$ is general in $\Gr\big(r-\ell,V_1(q_k)\big)$, so the Type 2 component imposes codimension $\ge A - \ell s$ in $\Delta \cap (U_{\ell,q_k} \times Z_{e_2})$, hence also in $\Delta \cap (\beta_{1,q_k}^{-1}(Z_{\ell,q_k}) \times Z_{e_2})$. Thus by (\ref{eq:grassmann_strat_dim}), the preimage under $\beta_{1,q_k}$ of the intersection of $\Delta$ and the Schubert varieties has codimension
\[
	\ge A - \ell s + \big(\dim \Gr(1,q_k) - \dim \Gr(\ell,q_k)\big) - (\ell-1) = A - s + \ell(\ell-1)
\]
in $\Delta \cap (\Gr(1,q_k) \times Z_{e_2})$, and passing to $\Delta \cap (\Gr(\mathcal{E}_{e_1-1},1) \times Z_2)$ yields one additional codimension. The largest such locus is obtained when $\ell = 1$, but this still has codimension $\ge A - s + 1$. This completes the case $k > 0$.
\end{proof}

With this technical tool in hand, we can prove our result. Recall that $V$ is determined by specifying an isomorphism $V_1(p_1) \simeq V_2(p_2)$. We call $V$ \emph{very general} if the $V_i$ are very general (as in \S\ref{s:quot_general}) and this isomorphism is very general. This guarantees that the properness assumption on $\Delta$ in the previous lemma is satisfied.

\begin{prop}\label{p:quot_nodal} Suppose $V$ is very general. Let $\tilde{Q}_{e,V}$ denote the union of the top-dimensional components of $Q_{e,V}$. Then for all $e \in \mathbb{Z}$,
\begin{enumerate}[(a)]
\item $\tilde{Q}_{e,V}$ is generically smooth and of the expected dimension $rd + (r+s)e - rs(g-1)$ (or empty if this is negative);
\item The subscheme $U_{e,V}$ of torsion-free quotients in $Q_{e,V}$ is contained in $\tilde{Q}_{e,V}$ and is open and dense;
\item Intersections $W$ of general Schubert varieties are proper in each component of $\tilde{Q}_{e,V}$;
\item Top intersections $W$ are finite, reduced, and contained in $U_{e,V}$.
\end{enumerate}
\end{prop}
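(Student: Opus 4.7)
The plan is to identify $\tilde Q_{e,V}$ as the union, over partitions $e = e_1 + e_2$, of the closures $\overline{\phi_0(\Delta_{e_1,e_2})}$ from Lemma \ref{p:cover}, and then transfer the nice intersection theory on products of smooth-curve Quot schemes to $Q_{e,V}$ through the embeddings $\phi_0$. The key inputs are Lemma \ref{p:cover}, which covers $Q_{e,V}$ by images of $\phi_a$ and $\beta_{a,b}$; Proposition \ref{p:quot_general}, which handles the smooth components $C_i$; and the preceding properness lemma for Schubert intersections on $\Delta_a$.

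First I would verify dimensions. Because the $V_i$ are very general, Proposition \ref{p:quot_general} gives that each $Q_{e_i,V_i}$ is equidimensional of expected dimension with $U_{e_i,V_i}$, and hence the open subscheme $Z_{0,p_i}$ of full-rank fiber maps at $p_i$, open and dense. Very generality of the gluing isomorphism $V_1(p_1) \simeq V_2(p_2)$ forces $\Delta_{e_1,e_2}$ to have the expected codimension $rs$ inside $Z_{0,p_1} \times Z_{0,p_2}$, yielding dimension $rd + (r+s)e - rs(g-1)$ after using $d = d_1 + d_2$ and the arithmetic genus identity $g = g_1 + g_2$. Dimension counts using (\ref{eq:quot_strat_dim}), (\ref{eq:grassmann_strat_dim}), and the Grassmann fiber in the construction of $\beta_{a,b}$ then show that every remaining piece of the cover, namely the images of $\phi_a$ for $a > 0$ and of $\beta_{a,b}$ for $b > 0$, has strictly smaller dimension, proving (a). For (b), the snake-lemma diagram at the end of \S\ref{ss:nodal_sheaves} specialized to $a = b = 0$ shows that on the dense open $\Delta_{e_1,e_2} \cap (U_{e_1,V_1} \times U_{e_2,V_2})$ the quotient is $F \simeq \iota_{1*}F_1 \oplus \iota_{2*}F_2$, which is torsion-free when each $F_i$ is; this places the image inside $U_{e,V}$, and generic smoothness is inherited from the factors through the smooth evaluation maps to $G$.

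Parts (c) and (d) follow from the preceding properness lemma applied to each $\Delta_{e_1,e_2}$. The point is that a Schubert variety $\overline W_{\vec a}(p)$ on $Q_{e,V}$ based at a smooth point $p \in C_i$ pulls back under $\phi_0$ to a Schubert condition on the $i$th factor alone, so an intersection of general Schubert varieties at smooth points away from the node becomes exactly the kind of intersection on $\Delta_{e_1,e_2}$ controlled by that lemma. A top intersection cannot exceed its expected codimension, so it is automatically supported on $\tilde Q_{e,V}$; denseness of $U_{e,V}$ combined with a Kleiman-Bertini avoidance of the branch locus of the evaluation map to $G^N$ then yields finiteness and reducedness. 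The main obstacle I anticipate is the dimension bookkeeping showing that every non-$\phi_0$ stratum drops dimension \emph{strictly}: naive counts can appear to cancel because lowering the subsheaf degree by $b$ enlarges the base Quot scheme even as $\Gr(\mathcal{E}|_\nu,b)$ contributes positive fiber dimension, so one must simultaneously track the defect parameter $a$ and the modification parameter $b$ to see the net loss.
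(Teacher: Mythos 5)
Your overall approach matches the paper's: cover $Q_{e,V}$ by the images of the $\phi_a$ and $\beta_{a,b}$ from Lemma \ref{p:cover}, identify the top-dimensional components as the closures of the $\phi_0(\Delta_0)$ for partitions $e_1+e_2=e$, and pull the nice intersection theory back from the factors $Q_{e_i,V_i}$ via Proposition \ref{p:quot_general} and the properness lemma on $\Delta$. Parts (a), (b), and (d) are handled as the paper does.

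The gap is in (c). You claim that properness of Schubert intersections on each component of $\tilde{Q}_{e,V}$ ``follows from the preceding properness lemma applied to each $\Delta_{e_1,e_2}$,'' but that lemma only controls the intersection on the locally closed piece $\Delta_0 \subset Z_{0,p_1} \times Z_{0,p_2}$. Each top-dimensional component of $Q_{e,V}$ is the \emph{closure} $\overline{\phi_0(\Delta_0)}$, and this closure includes points lying in the images of $\phi_a$ for $a>0$ (where the subsheaf is defective at the node) and of $\beta_{a,b}$ for $b>0$. A Schubert intersection of intermediate codimension can perfectly well pile up mass on these boundary strata, so properness on $\Delta_0$ alone does not give properness on the whole component. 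Note that the lemma just before this proposition is deliberately stated for $Z_{e_i}:=Z_{a,p_i}$ with arbitrary $0\le a\le r$, so it does cover the $\phi_a$ strata for $a>0$ if you invoke it in that generality; but the $\beta_{a,b}$ strata still require a separate argument. The paper's proof disposes of these by observing that on the domain of $\beta_{a,b}$ the Schubert conditions descend to the base $Z_{a+b,\nu,e-b}$, where properness holds by induction on $e$. Without this you have not shown the intersection has the correct codimension near the boundary of each top-dimensional component, so you should add the inductive argument for the $\beta_{a,b}$ strata and make explicit the use of the lemma for all $a$, not only $a=0$.
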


\begin{proof} We cover $Q_{e,V}$ with the images of maps $\phi_a$ and $\beta_{a,b}$ from Proposition \ref{p:cover} for all $0 \le a \le r$, $0 < b \le r-a$, and $e_1 + e_2 = e + a$. The maps $\phi_0$ for various partitions $e_1+e_2=e$ identify each component of $U_{e,V}$ as the diagonal $\Delta_0$ in some $U_{e_1,V_1} \times U_{e_2,V_2}$. Since the $U_{e_i,V_i}$ are all generically-smooth of the expected dimension, and since the domains of the other maps have smaller dimension, we get (a) and (b).

For (c), it suffices to prove that the preimages of Schubert varieties in $Q_{e,V}$ have failure 0 in the domain of each $\phi_a$ and $\beta_{a,b}$. For $\phi_a$ this follows from the previous lemma. For the maps $\beta_{a,b}$, we can perform the intersection on the base, where it is proper by induction on $e$.

It follows from (c) that top intersections can be performed by pulling back to the diagonal in each $U_{e_1,V_1} \times U_{e_2,V_2}$, so Lemma \ref{l:morphism_proper} ensures (d).
\end{proof}

\begin{rmk} Although the proposition does not rule out the possibility of additional low-dimensional components in the Quot scheme, it does ensure that such components do not affect the intersection numbers on $Q_{e,V}$.
\end{rmk}

\begin{cor}\label{c:degeneration_formula} Suppose $V$ is very general. Let $\bar{\sigma}_{\underline{\vec{a}}_i}$ denote a cup product of Schubert cycles based at points on the component $C_i$. Then for all $e$,
\[
	\int_{Q_{e,V}} \bar{\sigma}_{\underline{\vec{a}}_1} \cup \bar{\sigma}_{\underline{\vec{a}}_2}
	= \sum_{e_1 + e_2 = e} \sum_{\vec{b}} \left( \int_{Q_{e_1,V_1}}\bar{\sigma}_{\underline{\vec{a}}_1} \cup \bar{\sigma}_{\vec{b}} \right) \left( \int_{Q_{e_2,V_2}}\bar{\sigma}_{\underline{\vec{a}}_2} \cup \bar{\sigma}_{\vec{b}^c} \right).
\]
\end{cor}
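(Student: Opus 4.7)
The strategy is to reduce the integral on the nodal Quot scheme $Q_{e,V}$ to a sum over partitions $e=e_1+e_2$ of integrals on the diagonals $\Delta_0 \subset U_{e_1,V_1} \times U_{e_2,V_2}$, and then to rewrite the class of the diagonal on $G\times G$ in the Schubert basis. First, by Proposition \ref{p:quot_nodal}(d), any top intersection of general Schubert varieties based at points on $C_1 \sqcup C_2 \setminus \{\nu\}$ is finite, reduced, and entirely contained in $U_{e,V}$. Consequently, it contributes only to the top-dimensional components of $Q_{e,V}$, and by the Remark following Lemma \ref{p:cover}, these components are precisely the images of the embeddings $\phi_0 \colon \Delta_0 \hookrightarrow U_{e,V}$ indexed by the partitions $e=e_1+e_2$. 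Since general Schubert cycles avoid the lower-dimensional strata, we obtain
\[
   \int_{Q_{e,V}} \bar{\sigma}_{\underline{\vec{a}}_1} \cup \bar{\sigma}_{\underline{\vec{a}}_2}
   = \sum_{e_1+e_2=e} \int_{\Delta_0 \subset U_{e_1,V_1}\times U_{e_2,V_2}} \bar{\sigma}_{\underline{\vec{a}}_1} \cup \bar{\sigma}_{\underline{\vec{a}}_2},
\]
where the Schubert cycles on the right are the natural pullbacks under the two projections (this is legitimate because $\phi_0$ restricts the universal subsheaf on each side away from $\nu$ to the universal subsheaves on the factors, so the degeneracy-locus descriptions of $\bar{\sigma}_{\vec{a}}(q)$ for $q \ne \nu$ match up under $\phi_0$).

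Next I would use the fact that $\Delta_0$ is cut out as the preimage of the diagonal under the evaluation map
\[
   \ev_{p_1,p_2}\colon U_{e_1,V_1} \times U_{e_2,V_2} \longrightarrow G \times G,
\]
where both copies of $G$ are identified via the chosen isomorphism $V_1(p_1)\simeq V_2(p_2)$. Since the class of the diagonal in $G\times G$ is $\sum_{\vec{b}} \sigma_{\vec{b}} \otimes \sigma_{\vec{b}^c}$, and since pulling back a Schubert cycle $\sigma_{\vec{b}}$ from $G$ along $\ev_{p_i}$ yields the Schubert class $\bar\sigma_{\vec{b}}$ on $Q_{e_i,V_i}$ based at $p_i$ (by construction of both $\ev_{p_i}$ and the Schubert varieties as degeneracy loci on the restricted universal subsheaf), we obtain
\[
   [\Delta_0] = \sum_{\vec b}\, \bar\sigma_{\vec b}(p_1) \times \bar\sigma_{\vec b^c}(p_2)
\]
in $H^*(U_{e_1,V_1}\times U_{e_2,V_2},\CC)$. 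The needed properness of this pullback is ensured precisely by the hypothesis used throughout the nodal analysis that $\Delta_0$ has the expected codimension $(r-0)(s+0) = rs$.

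Substituting this expression for $[\Delta_0]$ and applying the product (Fubini) decomposition of the integral on $U_{e_1,V_1}\times U_{e_2,V_2}$ yields
\[
   \int_{\Delta_0} \bar{\sigma}_{\underline{\vec{a}}_1} \cup \bar{\sigma}_{\underline{\vec{a}}_2}
   = \sum_{\vec b}\left(\int_{U_{e_1,V_1}} \bar{\sigma}_{\underline{\vec{a}}_1}\cup\bar\sigma_{\vec b}\right)
   \left(\int_{U_{e_2,V_2}} \bar{\sigma}_{\underline{\vec{a}}_2}\cup\bar\sigma_{\vec b^c}\right).
\]
Since top intersections on each factor are contained in $U_{e_i,V_i}$ by Proposition \ref{p:quot_general}(c), these integrals equal the corresponding intersection numbers on $Q_{e_i,V_i}$. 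Summing over $e_1+e_2=e$ produces the claimed formula.

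The main obstacle I expect to encounter is the careful identification of $[\Delta_0]$: one has to verify both that the evaluation maps are morphisms on $U_{e_i,V_i}$, and that the pullback of the Schubert class $\sigma_{\vec b}$ from $G$ agrees (as a cohomology class on $Q_{e_i,V_i}$) with $\bar\sigma_{\vec b}(p_i)$, so that the cross-term $\bar\sigma_{\vec b}(p_1)\times \bar\sigma_{\vec b^c}(p_2)$ genuinely represents the pullback of the diagonal class. Once the degeneracy-locus descriptions on either side are matched, the rest is a bookkeeping argument relying on the already-established properness and denseness statements in Propositions \ref{p:quot_general} and \ref{p:quot_nodal}.
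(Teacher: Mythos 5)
Your proposal is correct and follows essentially the same route as the paper's own proof: restrict top intersections to $U_{e,V}$ using Proposition \ref{p:quot_nodal}, decompose $U_{e,V}$ via the embeddings $\phi_0$ into diagonals $\Delta_0 \subset U_{e_1,V_1}\times U_{e_2,V_2}$ over partitions $e = e_1 + e_2$, identify the class of $\Delta_0$ as $\sum_{\vec b}\bar\sigma_{\vec b}\otimes\bar\sigma_{\vec b^c}$ by pulling back the diagonal class in $G\times G$, and conclude by pairing. The only difference is that you spell out the matching of $\ev_{p_i}^*\sigma_{\vec b}$ with $\bar\sigma_{\vec b}(p_i)$ more explicitly than the paper, which simply notes the identification holds ``up to a class supported on the boundary.''
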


\begin{proof} By the previous proposition, top intersections are contained in $U_{e,V}$. Since the top-dimensional components of $U_{e,V}$ are isomorphic to $(U_{e_1,V_1} \times U_{e_2,V_2}) \cap \Delta_0$ for partitions $e_1 + e_2 = e$, we can compute the intersection on each $(U_{e_1,V_1} \times U_{e_2,V_2}) \cap \Delta_0$. Since the class of the diagonal in $G \times G$ is $\sum_{\vec{b}} \sigma_{\vec{b}} \otimes \sigma_{\vec{b}^c}$, the class of $\Delta_0$ in $Q_{e_1,V_1} \times Q_{e_2,V_2}$ is $\sum_{\vec{b}} \bar{\sigma}_{\vec{b}} \otimes \bar{\sigma}_{\vec{b}^c}$ (up to a class supported on the boundary, which has no effect on intersection numbers). Now the formula follows by pairing this class with the Schubert cycles in cohomology.
\end{proof}
The right side of the equation in the corollary is independent of the components $C_i$ in moduli and the points $p_i \in C_i$, so a version of Proposition \ref{p:chern_classes} holds for nodal curves as well. 

Since the $V_i$ are very general, we can express the integrals on the smooth components in the corollary as integrals on Quot schemes of trivial bundles for very large $e$. We then apply the corollary again to get an integral on $Q_{e,C}$ (though the corollary is only stated for $V$ very general, it also holds for sufficiently large $e$ when the $V_i$ are trivial bundles since the $Q_{e_i,C_i}$ have the right properties and the gluing of the trivial bundles can still be chosen very general). Thus, as in the case of smooth curves, all integrals on Quot schemes of very general vector bundles can be expressed as integrals on Quot schemes of trivial bundles.

\begin{cor}\label{c:nodal_gentotriv} In the setting of the previous corollary, let $\deg V_i = d_i$ with $d_1+d_2=d$. Then letting $\ell = \ell_1+\ell_2$ for $\ell_i$ sufficiently large,
\[
	\int_{Q_{e,V}} \bar{\sigma}_{\underline{\vec{a}}_1} \cup \bar{\sigma}_{\underline{\vec{a}}_2}
	= \int_{Q_{e+r\ell,C}} \big(\bar{\sigma}_{\underline{\vec{a}}_1} \cup \bar{\sigma}_{1^r}^{(r+s)\ell_1-d_1}\big) \cup \big( \bar{\sigma}_{\underline{\vec{a}}_2} \cup \bar{\sigma}_{1^r}^{(r+s)\ell_2-d_2} \big).
\]
\end{cor}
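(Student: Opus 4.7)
The plan is a three-step chain: first split the left-hand side using Corollary \ref{c:degeneration_formula}, then convert each smooth-component factor to an integral on a trivial-bundle Quot scheme using Corollary \ref{c:gentotriv}, and finally recombine by running Corollary \ref{c:degeneration_formula} in reverse for the trivial bundle on $C$.

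Applying Corollary \ref{c:degeneration_formula} expresses the left-hand side as a finite double sum over partitions $e_1 + e_2 = e$ and Schubert indices $\vec{b}$ of products
\[
\left(\int_{Q_{e_1,V_1}} \bar{\sigma}_{\underline{\vec{a}}_1} \cup \bar{\sigma}_{\vec{b}}\right)\left(\int_{Q_{e_2,V_2}} \bar{\sigma}_{\underline{\vec{a}}_2} \cup \bar{\sigma}_{\vec{b}^c}\right).
\]
Since only finitely many partitions contribute, fix $\ell_1, \ell_2$ large enough that Corollary \ref{c:gentotriv} applies uniformly to every contributing factor. Substituting that corollary replaces each $\int_{Q_{e_i,V_i}}$ with $\int_{Q_{e_i + r\ell_i, C_i}}$ paired with the extra insertion $\bar{\sigma}_{1^r}^{(r+s)\ell_i - d_i}$. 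Setting $e_i' := e_i + r\ell_i$ and $\ell := \ell_1 + \ell_2$, the resulting double sum is naturally indexed by $(e_1', e_2')$ with $e_1' + e_2' = e + r\ell$, which is exactly the shape of a degeneration-formula sum for the trivial bundle.

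The last step is to invoke Corollary \ref{c:degeneration_formula} in reverse for the trivial bundle $\OO_C^{r+s}$ on the nodal curve $C$, paired with the class $\bar{\sigma}_{\underline{\vec{a}}_1} \cup \bar{\sigma}_{1^r}^{(r+s)\ell_1 - d_1} \cup \bar{\sigma}_{\underline{\vec{a}}_2} \cup \bar{\sigma}_{1^r}^{(r+s)\ell_2 - d_2}$; this identifies the double sum with $\int_{Q_{e+r\ell,C}}$ of the asserted class, giving the right-hand side. The main obstacle is that Corollary \ref{c:degeneration_formula} is only stated for very general $V$, so one must justify its extension to the trivial bundle here. This amounts to verifying the analog of Proposition \ref{p:quot_nodal} for $\OO_C^{r+s}$: the trivial bundle restricts to the trivial bundle of degree $0$ on each $C_i$, the gluing at the node can still be chosen very general, and for $\ell$ sufficiently large the Quot schemes $Q_{e+r\ell, C}$ and $Q_{e_i', C_i}$ satisfy the hypotheses of Proposition \ref{p:quot_trivial_props}. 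Under these conditions the arguments of Proposition \ref{p:quot_nodal} (equidimensionality of top components, density of $U_{e,V}$, properness of Schubert intersections) apply essentially verbatim with $V_i$ trivial, so Corollary \ref{c:degeneration_formula} follows in this setting and closes the chain of identities.
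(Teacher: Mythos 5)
Your proposal is correct and follows essentially the same three-step chain the paper uses: apply Corollary \ref{c:degeneration_formula} to split the integral over the node, convert each smooth-component factor via Corollary \ref{c:gentotriv}, and recombine by running the degeneration formula in reverse for the trivial bundle (with the same caveat the paper notes, that the trivial bundles on the $C_i$ have the required properties for $e\gg0$ and the gluing at the node can still be chosen very general). The only point you might make explicit is that the reverse-degeneration sum for $Q_{e+r\ell,C}$ is indexed by all partitions $e_1'+e_2'=e+r\ell$, not just those coming from contributing $(e_1,e_2)$, but the dimension count forces the extra terms to vanish, so the identification of the two double sums is harmless.
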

This simplifies the deformation argument in the next section by avoiding the need to consider deformations of general bundles.

\subsection{Proof of Main Theorem (a)}

Given the formula in Corollary \ref{c:degeneration_formula} relating intersection numbers on nodal curves and their components, the last step is to relate intersection numbers on nodal curves with intersection numbers on smooth curves of the same genus. Let $C$ be a nodal curve of arithmetic genus $g$ and $C'$ denote a smooth curve of genus $g$ obtained by smoothing $C$.

\begin{lem}\label{l:nodal_deformation} For all $e \gg 0$, the intersection numbers on $Q_{e,C}$ agree with the intersection numbers on $Q_{e,C'}$.
\end{lem}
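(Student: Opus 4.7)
The plan is to mimic the deformation argument of Proposition \ref{p:chern_classes}(b), which handled families of smooth curves, now allowing the central fiber to be nodal. I would begin by choosing a flat family $\pi \colon \mathcal{C} \to B$ over a smooth pointed curve $(B,0)$ with $\mathcal{C}_0 \simeq C$ and $\mathcal{C}_b \simeq C'$ for $b$ near $0$, arranged so that the total space $\mathcal{C}$ is a smooth surface (a standard smoothing of the node). The relative trivial bundle $\OO_{\mathcal{C}}^{r+s}$ restricts fiberwise to $\OO_C^{r+s}$ and $\OO_{C'}^{r+s}$, so the relative Quot scheme $\mathcal{Q}_e \to B$ of rank-$r$, degree-$(-e)$ subsheaves has fibers $Q_{e,C}$ and $Q_{e,C'}$, and carries a universal subsheaf $\mathcal{E}$ on $\mathcal{C} \times_B \mathcal{Q}_e$.

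For $e \gg 0$, Proposition \ref{p:quot_trivial_props} makes $Q_{e,C'}$ equidimensional of the expected dimension, and the analog for trivial bundles on nodal curves (listed in the summary in \S\ref{s:intro} and provable by the stratification and induction of \S\ref{s:quot_trivial} applied to the two smooth components) gives the same conclusion for the top-dimensional part of $Q_{e,C}$. Equality of these fiberwise dimensions, combined with constancy of the Hilbert polynomial along $\mathcal{Q}_e$, forces $\mathcal{Q}_e \to B$ to be flat near $0$ once any extraneous lower-dimensional components on the central fiber are discarded; such components carry no top Schubert intersections.

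Next, following Proposition \ref{p:chern_classes}(a), I would express intersection numbers on both fibers as integrals of Chern classes of the universal subsheaf restricted along chosen base points. On the torsion-free loci $U_{e,C}$ and $U_{e,C'}$, each $\bar{\sigma}_{\vec{a}}(p)$ is the Giambelli polynomial in the Chern classes of the dual of $\mathcal{E}|_{\{p\} \times Q_e}$, and since top intersections of general Schubert varieties lie in these torsion-free loci (Propositions \ref{p:quot_trivial_props} and \ref{p:quot_nodal}), the discrepancy supported on the boundary contributes nothing to the integral. I would then choose disjoint sections $\sigma_i \colon B \to \mathcal{C}$ through the desired base points on each fiber (missing the node on $C$), and form the restrictions $(\sigma_i \times \mathrm{id})^*\mathcal{E}$, which are honest vector bundles on $\mathcal{Q}_e$. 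Their Chern classes are global on $\mathcal{Q}_e$ and restrict fiberwise to the integrand computing each side of the desired equality; the top product pushes forward to a locally constant integer on $B$, yielding equality of intersection numbers at $0$ and at a general $b$.

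The main obstacle is the flatness step, which hinges on the equidimensionality of both fibers for $e \gg 0$; a secondary care is needed in extending the base points on $C'$ to disjoint sections avoiding the node of $C$, but this is routine after shrinking $B$. Once flatness is established, the argument is a direct transcription of the smooth-curve deformation argument of Proposition \ref{p:chern_classes}(b).
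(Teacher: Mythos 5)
Your overall architecture matches the paper's: a relative Quot scheme over a one-parameter smoothing, a flattening step, and then the Chern-class/universal-bundle argument (the analogue of Proposition~\ref{p:chern_classes}(a)--(b) with sections of $\mathcal{C} \to B$ avoiding the node) to conclude that the top intersection numbers are locally constant over the base. That part of the proposal is sound.

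The gap is in the flattening step, and it is not cosmetic. You write that equality of fiber dimensions ``combined with constancy of the Hilbert polynomial along $\mathcal{Q}_e$'' forces flatness once low-dimensional components of the central fiber are discarded. But constancy of the Hilbert polynomial \emph{is} flatness for a projective family over a smooth curve, so this phrasing is circular and does not establish anything. More importantly, discarding low-dimensional components does not by itself make $\mathcal{Q}_e \to B$ flat at $0$: a top-dimensional component $Z$ of $Q_{e,C}$ of the expected dimension could a priori fail to lie in the flat limit (the closure of the generic fiber), in which case the Chern-class calculation would compute a number for the flat limit that differs from the honest intersection number on $Q_{e,C}$. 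There could also be nonreduced structure on a top-dimensional component of the flat limit, contributing an extra multiplicity. The paper closes exactly this gap by invoking Theorem~2.15 of \cite{Kol96}: because every top-dimensional component of $Q_{e,C}$ has the \emph{expected} dimension (Proposition~\ref{p:quot_nodal}(a)), such components must deform and hence are contained, generically reduced, in the flat limit; the only possible sources of nonflatness are embedded or small components, which can be discarded without affecting top Schubert intersections. To repair your proof, you should either cite that result (or an equivalent lci/deformation-theoretic criterion) to show the flat limit agrees with $\tilde{Q}_{e,C}$ as a cycle, or give a direct argument to the same effect; dimension-counting alone is not enough.
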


\begin{proof} The proof is similar to Proposition 1.5 in \cite{Ber94}. Let $\mathcal{C}$ be a family over a base curve $B$ smoothing $C$, where $\OO_\mathcal{C}^{r+s}$ is the trivial deformation of $\OO_C^{r+s}$. Consider the relative Quot scheme $\pi \colon Q = \Quot\big((r,-e),\OO_{\mathcal{C}}^{r+s},B\big) \to B$, whose fibers over $b \in B$ are $Q_{e,C_b}$ and which is projective over $B$. If $\pi$ is not already flat at the central fiber $b_0 \in B$, then there must be associated points of $Q$ supported at $b_0$; these arise either from nonreducedness or from small components of $Q_{e,C}$ because the top-dimensional components are all of the expected dimension and hence deform in all families by (this follows from Theorem 2.15 of \cite{Kol96}). In any case, we can replace the central fiber by its flat limit without affecting the top-dimensional components.

Now $\pi$ is flat and we want to apply Lemma 1.6 of \cite{Ber94}. After restriction and base change, we can find a section $\sigma$ of $\mathcal{C}$ near each $b \in B$ and restrict the universal subsheaf (which is defined on $\mathcal{C} \times_B Q$) to $\sigma \times_B Q$. The lemma implies that top intersections of Chern classes of the universal subbundles in the fibers are independent of the base point, hence the intersection numbers on the Quot schemes are independent of the base point by Proposition \ref{p:chern_classes}. On the central fiber these intersection numbers compute the intersection numbers on the nodal curve (because the lower-dimensional components do not interfere with top intersections of Schubert varieties).
\end{proof}

The lemma implies that intersection numbers on the nodal curve $C$ do not depend on how the Schubert varieties are spread across the two components of $C$. Moreover, using Corollary \ref{c:nodal_gentotriv}, we deduce that $Q_{e,V}$ and $Q_{e,V'}$ have the same intersection numbers for all $e$ when $V$ and $V'$ be very general vector bundles on $C$ and $C'$ of the same rank $r+s$ and degree $d$.

\begin{cor} Suppose $g = g_1 + g_2$ and $d = d_1 + d_2$. Then
\[
	F(g|d)_m^n = F(g_2|d_2)_1^n \circ F(g_1|d_1)_m^1.
\]
\end{cor}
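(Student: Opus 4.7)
The plan is to verify the identity directly at the level of matrix coefficients in the Schubert basis, using the degeneration machinery already assembled in \S\ref{s:quot_nodal}. Writing out both sides against the input $\sigma_{\vec{a}_1}\otimes\cdots\otimes\sigma_{\vec{a}_m}$, the coefficient of $\sigma_{\vec{b}_1^c}\otimes\cdots\otimes\sigma_{\vec{b}_n^c}$ in $F(g|d)_m^n$ is $\int_{Q_{V'}}\bar\sigma_{\underline{\vec{a}}}\cup\bar\sigma_{\underline{\vec{b}}}$, where $V'$ is a very general bundle of degree $d$ on a smooth curve $C'$ of genus $g$. After expanding $F(g_1|d_1)_m^1$ and then applying $F(g_2|d_2)_1^n$, the corresponding coefficient on the right-hand side becomes $\sum_{\vec{c}}\bigl(\int_{Q_{V_1}}\bar\sigma_{\underline{\vec{a}}}\cup\bar\sigma_{\vec{c}}\bigr)\bigl(\int_{Q_{V_2}}\bar\sigma_{\vec{c}^c}\cup\bar\sigma_{\underline{\vec{b}}}\bigr)$, where $V_i$ is a very general bundle of degree $d_i$ on a smooth curve $C_i$ of genus $g_i$. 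Thus it suffices to establish
\[
\int_{Q_{V'}}\bar\sigma_{\underline{\vec{a}}}\cup\bar\sigma_{\underline{\vec{b}}}
\;=\;\sum_{\vec{c}}\Bigl(\int_{Q_{V_1}}\bar\sigma_{\underline{\vec{a}}}\cup\bar\sigma_{\vec{c}}\Bigr)\Bigl(\int_{Q_{V_2}}\bar\sigma_{\vec{c}^c}\cup\bar\sigma_{\underline{\vec{b}}}\Bigr).
\]

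To produce this, I would first degenerate $C'$ to a reducible nodal curve $C=C_1\cup_\nu C_2$ of arithmetic genus $g_1+g_2=g$, and simultaneously degenerate $V'$ to a bundle $V$ on $C$ obtained by gluing $V_1$ and $V_2$ along a very general isomorphism $V_1(p_1)\simeq V_2(p_2)$; by the bijection established in \S\ref{ss:nodal_sheaves}, $\deg V = d_1+d_2 = d$, so this is compatible with the weight additivity in the statement. The Schubert cycles based at marked points on $C'$ specialize to Schubert cycles based at corresponding points distributed across $C_1$ and $C_2$ (this is the $\bar\sigma_{\underline{\vec a}}$ for $C_1$-points and $\bar\sigma_{\underline{\vec b}}$ for $C_2$-points). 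Applying the trivial-bundle identification of Corollary~\ref{c:nodal_gentotriv} on the nodal side and Corollary~\ref{c:gentotriv} on the smooth side, both integrals can be rewritten as intersection numbers on Quot schemes of \emph{trivial} bundles (with extra powers of $\bar\sigma_{1^r}$). Lemma~\ref{l:nodal_deformation} then equates these trivial-bundle intersection numbers on $C$ and $C'$ for $e\gg 0$, giving $\int_{Q_{e,V'}}\bar\sigma_{\underline{\vec a}}\cup\bar\sigma_{\underline{\vec b}} = \int_{Q_{e,V}}\bar\sigma_{\underline{\vec a}}\cup\bar\sigma_{\underline{\vec b}}$ for every $e$.

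The second ingredient is Corollary~\ref{c:degeneration_formula}, which decomposes each integral on the nodal curve as
\[
\int_{Q_{e,V}}\bar\sigma_{\underline{\vec a}}\cup\bar\sigma_{\underline{\vec b}}
\;=\;\sum_{e_1+e_2=e}\sum_{\vec c}\Bigl(\int_{Q_{e_1,V_1}}\bar\sigma_{\underline{\vec a}}\cup\bar\sigma_{\vec c}\Bigr)\Bigl(\int_{Q_{e_2,V_2}}\bar\sigma_{\vec c^c}\cup\bar\sigma_{\underline{\vec b}}\Bigr),
\]
which encodes the gluing of subsheaves via the diagonal class $\sum_{\vec c}\sigma_{\vec c}\otimes\sigma_{\vec c^c}\in H^*(G\times G,\CC)$. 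Summing over all $e$ and combining with the previous step yields precisely the desired identity.

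The main obstacle is not the algebra but the subtle compatibility between the two deformation steps: the lemma in \S\ref{l:nodal_deformation} is stated for trivial bundles (and for $e\gg 0$), whereas the theorem concerns very general bundles of arbitrary degree and all $e$. The route through Corollary~\ref{c:nodal_gentotriv} resolves this: intersections against sufficiently many factors of $\bar\sigma_{1^r}$ reduce everything to trivial-bundle integrals where the deformation argument is legitimate. One should also confirm that the specialization of Schubert cycles is unambiguous, which in our case is automatic because their cohomology classes on $Q_{e,V'}$ depend only on the partition (not on the chosen marked point), so the partition of the $\bar\sigma_{\vec a_i}$ across $C_1$ and $C_2$ does not affect the degenerated integral; this is exactly the ``subtlety turns out to be unnecessary'' remark in the sketch of Main Theorem~(a). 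Once these compatibilities are in place, the proof reduces to a straightforward chain of equalities.
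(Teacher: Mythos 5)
Your proposal is correct and takes essentially the same route as the paper: compare matrix coefficients in the Schubert basis, degenerate the smooth curve and bundle to a nodal curve with a glued bundle, invoke Corollary~\ref{c:degeneration_formula} to split the nodal integral, and use Lemma~\ref{l:nodal_deformation} (via Corollaries~\ref{c:gentotriv} and \ref{c:nodal_gentotriv} to reduce to trivial bundles at large $e$) to identify nodal and smooth integrals. The paper simply compresses the last step by citing the lemma directly, having already spelled out the trivial-bundle reduction in the discussion preceding the corollary; you unpack that step explicitly, but the argument is the same.
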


\begin{proof}
It suffices to prove that the images of $\sigma_{\vec{a}_1} \otimes \cdots \otimes \sigma_{\vec{a}_m}$ under each of the maps have the same coefficient of $\sigma_{\vec{b}_1} \otimes \cdots \otimes \sigma_{\vec{b}_n}$. The map $F(g|d)_m^n$ yields the coefficient
\[
	\int_{Q_V} \bar{\sigma}_{\underline{\vec{a}}} \cup \bar{\sigma}_{\underline{\vec{b}}} \tag{$\dagger$},
\]
where $V$ is a very general vector bundle of degree $d$ on a smooth curve of genus $g$. The composition on the right yields the coefficient
\[
	\sum_{\vec{c}} \left( \int_{Q_{V_1}} \bar{\sigma}_{\underline{\vec{a}}} \cup \bar{\sigma}_{\vec{c}} \right) \left( \int_{Q_{V_2}} \bar{\sigma}_{\underline{\vec{b}}} \cup \bar{\sigma}_{\vec{c}^c} \right), \tag{$\ddagger$}
\]
where the $V_i$ are very general vector bundles of degree $d_i$ on smooth curves of genus $g_i$. By summing the equation in Corollary \ref{c:degeneration_formula} over all $e$, the expression $(\ddagger)$ is equal to an integral on a Quot scheme over the nodal curve obtained by gluing the $C_i$ at a point, and this integral coincides with ($\dagger$) by the lemma.
\end{proof}

It is easy to deduce the general partial composition relations of the weighted TQFT from the corollary by gluing one boundary circle at a time. This completes the proof of Main Theorem (a).

\bibliographystyle{alpha}
\bibliography{My_Library}

\end{document}